\numberwithin{equation}{section}
\newtheorem{theorem}{Theorem}[section]
\newtheorem{corollary}[theorem]{Corollary}
\newtheorem{lemma}[theorem]{Lemma}
\newtheorem{proposition}[theorem]{Proposition}
\newtheorem{example}[theorem]{Example}
\newtheorem{definition}[theorem]{Definition}
\newtheorem{remark}[theorem]{Remark}
\newcommand{\n}{\mathfrak{n}}
\newcommand{\symplectic}{\mathfrak{sp}}
\newcommand{\h}{\mathfrak{h}}
\newcommand{\hv}{\nu_{\lambda}}
\newcommand{\V}{\textsc{V}}
\newcommand{\N}{\textsc{N}}
\newcommand{\X}{\textsc{X}}
\newcommand{\Lm}{\textsc{L}}
\newcommand{\Jm}{\textsc{J}}
\newcommand{\U}{\textsc{U}}
\newcommand{\I}{\textsc{I}}
\newcommand{\ideal}{\mathfrak{I}}
\newcommand{\tableau}{\textsc{T}}
\newcommand{\set}{\textsc{S}}
\newcommand{\Tset}{\textsf{T}}
\newcommand{\complete}{\textsc{Sp}\mathcal{F}}
\newcommand{\ov}{\overline}
\newcommand{\SyST}{\textsc{SyST}}
\title{Tropical symplectic flag varieties: a Lie-theoretic approach}
\author[G. Balla]{George Balla}
\address{Lehrstuhl f\"ur Algebra und Darstellungstheorie, RWTH Aachen University, Pontdriesch 10-16, 52062 Aachen, Germany}
\email{balla@art.rwth-aachen.de}
\author[X. Fang]{Xin Fang}
\address{Abteilung Mathematik, Department Mathematik/Informatik, Universit\"at zu K\"oln, 50931, Cologne, Germany}
\email{xinfang.math@gmail.com}
\begin{document}

\begin{abstract}
We study tropicalization of symplectic flag varieties with respect to the Pl\"ucker embedding. We identify a particular maximal prime cone in this tropicalization by explicitly giving its facets. For every interior point of this maximal cone, the corresponding Gr\"obner degeneration is the toric variety associated to the Feigin-Fourier-Littelmann-Vinberg (FFLV) polytope. Our main tool is the notion of birational sequences introduced by Fourier, Littelmann and the second author, which bridges between weighted PBW filtrations of representations of symplectic Lie algebras and degree functions on defining ideals of symplectic flag varieties.\\

\noindent \textbf{Key words:} Tropicalization; Gr\"obner degeneration; birational sequences; weighted PBW filtration; symplectic Lie algebras
\end{abstract}

\maketitle

\section{Introduction}
\subsection{Motivation}
In the past two decades tropical geometry has received great attention in enumerative geometry, mirror symmetry, Diophantine geometry, optimization, to name but a few. The tropical variety associated to an embedded projective variety is a pure sub-fan of the Gr\"obner fan of the defining ideal of the projective variety. The study of tropical varieties associated to geometric objects arising from Lie theory, such as Grassmannians and flag varieties, is initiated in the work of Speyer and Sturmfels \cite{SS04}. In this work, the tropical variety associated to the Grassmannian $\mathrm{Gr}(2,n)$ of $2$-planes in an $n$-space, with the Pl\"ucker embedding, gets a complete description by explicitly writing down defining inequalities of the maximal cones in the tropical variety using labelled trivalent trees. Such a complete description is not known for other Grassmannians.

The study of tropical varieties associated to flag varieties of type ${\tt A}$, called tropical flag varieties, started from the work of Bossinger, Lambogila, Mincheva and Mohammadi \cite{BLMM17}. In this work, tropical flag varieties of small rank were computed, and the authors showed existence of certain maximal cones related to certain string polytopes and FFLV polytopes using the method of toric degenerations. An explicit description of these cones by their facets or rays was not known in general.

Motivated by the work of Feigin, Fourier and Littelmann \cite{FFL11a, FFL11b} on PBW filtrations for simple Lie algebras and the work of Fourier, Reineke and the second author \cite{FFR16} on quantum PBW filtrations, weighted PBW filtrations for Lie algebras of type ${\tt A}_n$ are studied in \cite{FFFM19}: the cone consisting of such filtrations is shown to be sent bijectively to a maximal prime cone in the tropical flag variety of type ${\tt A}_n$. As a consequence, explicit facet description of this maximal prime cone was obtained. This result stands at the carrefour of certain directions in representation theory as follows.
\begin{enumerate}
\item[(a)] The Gr\"obner degenerations of the type ${\tt A}_n$ flag variety arising:
\begin{itemize}
\item[-] from points in the relative interior of this maximal prime cone yield the toric variety associated to the FFLV polytope \cite{FFFM19};
\item[-] from certain faces of this maximal prime cone are linear degenerate flag varieties introduced in \cite{CFFFR17, CFFFR20}.
\end{itemize}
\item[(b)] The cone consisting of weighted PBW filtrations of type ${\tt A}_n$ is closely related to the Auslander-Reiten theory for quiver representations: 
\begin{itemize}
\item[-] the facets of this cone correspond to Auslander-Reiten sequences in the category of representations of an equioriented type ${\tt A}_n$ quiver \cite{FFR21};
\item[-] the faces of this cone parametrize exact structures on the additive category consisting of finite dimensional representations of an equioriented type ${\tt A}_n$ quiver \cite{FG22}.
\end{itemize}
\end{enumerate}

We would like to note that Makhlin has provided in \cite{Mak22}, a full facet description of another maximal cone of the type {\tt A}$_n$ tropical flag variety using similar methods. This maximal cone also parametrizes Gr\"obner degenerations of the type {\tt A}$_n$ flag varieties, and in particular, every point in its relative interior corresponds to the toric variety associated with the Gelfand-Tsetlin polytope.

\subsection{Symplectic case}

In this paper, we generalize results in \cite{FFFM19} to the symplectic setting: we study weighted PBW degenerations of symplectic Lie algebras, their representations and corresponding geometry. This allows us to provide an explicit description of a maximal prime cone in the tropical symplectic flag variety. Since the notation has already been heavy, we will work with full symplectic flag varieties to avoid introducing further indices. Similar results hold for partial symplectic flag varieties, \emph{mutatis mutandis}. 

We define and treat the full symplectic flag variety $\complete_{2n}$ necessarily as the closure of a highest weight orbit for the action of the symplectic Lie group on its irreducible representations (see Definition \ref{Def:SymplecticFlag}). Consider the $2n$-dimensional complex vector space $\mathbb{C}^{2n}$ with a fixed symplectic form $\omega$. Then $\complete_{2n}$ coincides with the variety parametrizing length--$n$--flags of isotropic subspaces of $\mathbb{C}^{2n}$. The tropical symplectic flag variety $\mathrm{Trop}(\complete_{2n})$ is, up to a lineality space, a sub-fan of the Gr\"obner fan of the defining ideal of $\complete_{2n}$ with respect to the Pl\"ucker embedding; it is a pure polyhedral fan of dimension $n^2$ in $\mathbb{R}^{\mathcal{P}}$, where $\mathcal{P}$ is the index set of all Pl\"ucker coordinates on $\complete_{2n}$ (see Definition \ref{Def:SymplecticTropical}). A cone of $\mathrm{Trop}(\complete_{2n})$ is said to be a \emph{prime cone} if the corresponding initial ideal is prime. The following is one of our main results.

\begin{theorem}[Theorem \ref{Thm:main3}]\label{Thm:Main-Intro-1}
There is a maximal prime cone $\mathcal{C}_{2n}$ in the tropical symplectic flag variety $\mathrm{Trop}(\complete_{2n})$, with explicit facet description given in Lemma \ref{lem:maximalcone}, such that every point in its relative interior provides a Gr\"obner degeneration of $\complete_{2n}$ to the toric variety associated with the symplectic FFLV polytope (see Section \ref{Sec:SymplecticFFLV} for the definition).
\end{theorem}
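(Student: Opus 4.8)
The plan is to adapt the strategy of \cite{FFFM19} to type {\tt C}, replacing the type-{\tt A} FFLV combinatorics by its symplectic analogue from \cite{FFL11a, FFL11b}: I realize $\mathcal C_{2n}$ as the image, under an explicit linear map, of the cone of admissible weighted PBW filtrations of the fundamental $\symplectic_{2n}$-modules. Concretely, fix a birational sequence $\bs=(f_{\beta_1},\dots,f_{\beta_N})$ of root vectors spanning $\n^-\subset\symplectic_{2n}$, with $N=n^2$, ordered compatibly with the symplectic FFLV order, and for $\bt\in\mr_{>0}^{N}$ let $\mathcal F^{\bt}$ be the induced $\bt$-weighted PBW filtration on each irreducible module $\V(\la)$, $\la$ dominant. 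The first step is to use the birational-sequence dictionary to translate $\mathcal F^{\bt}$, applied to the multi-homogeneous coordinate ring $\bigoplus_{\la}\V(\la)^{*}$ of $\complete_{2n}$ in the Pl\"ucker embedding, into a degree function $\bw(\bt)\in\mr^{\mathcal P}$ on the polynomial ring in the Pl\"ucker variables, so that $\mathrm{in}_{\bw(\bt)}(\mathcal I)$ is the ideal cut out by the $\bt$-leading terms of the straightening relations defining the FFLV monomial bases.

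The representation-theoretic core is to pin down an explicit open cone $\mathcal S\subset\mr_{>0}^{N}$ for which, whenever $\bt\in\mathcal S$, the associated graded $\mathrm{gr}_{\mathcal F^{\bt}}\V(\la)$ is --- for every dominant $\la$ --- the semigroup algebra of the cone over the symplectic FFLV polytope $\FFLV(\la)$; equivalently, $\{f^{\bs}v_{\la}: \bs\in\FFLV(\la)\}$ is a Khovanskii basis of $\V(\la)$. The inequalities carving out $\mathcal S$ are precisely the condition that in each straightening relation rewriting a non-FFLV monomial as a combination of FFLV monomials, the $\bt$-weight of the left-hand side strictly dominates every weight on the right. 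Here I would invoke the FFLV monomial basis theorem for symplectic Lie algebras \cite{FFL11a, FFL11b} together with the Minkowski/compatibility property $\FFLV(\la)=\sum_{k}\langle\la,\alpha_k^{\vee}\rangle\,\FFLV(\omega_k)$, which ensures that the semigroup generated by the $\FFLV(\omega_k)$ is saturated and that the module-wise degenerations glue to a single flat degeneration of the whole Pl\"ucker algebra. Transporting the facets of $\mathcal S$ through $\bt\mapsto\bw(\bt)$ then yields the explicit inequality list of Lemma \ref{lem:maximalcone}.

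The geometric conclusions follow. By construction $\mathrm{in}_{\bw(\bt)}(\mathcal I)$ is the toric ideal of the symplectic FFLV polytope of $\complete_{2n}$ (Section \ref{Sec:SymplecticFFLV}); being binomial and prime, the corresponding Gr\"obner degeneration is precisely the associated toric variety, and $\bw(\bt)\in\mathrm{Trop}(\complete_{2n})$ for every $\bt\in\mathcal S$, so the closure $\mathcal C_{2n}$ of $\bw(\mathcal S)$ is a prime cone. For maximality one checks that the map $\mathcal S\to\mr^{\mathcal P}$, read modulo the lineality space of $\mathrm{Trop}(\complete_{2n})$, is injective --- the $N=n^2$ parameters $t_i$ being recoverable from the $\bw(\bt)$-values on a suitable family of Pl\"ucker coordinates --- so that the full-dimensional cone $\mathcal S\subset\mr^{n^2}$ maps onto an $n^2$-dimensional cone of the pure fan $\mathrm{Trop}(\complete_{2n})$, which is therefore maximal.

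The genuine obstacle is the representation-theoretic core. Unlike in type {\tt A}, the symplectic FFLV straightening relations are governed by ``symplectic Dyck path'' inequalities and by the subtler behaviour of long roots, so verifying that a single polyhedral cone $\mathcal S$ simultaneously degenerates every $\V(\la)$ to its FFLV semigroup algebra, and that the resulting facet inequalities form a non-redundant system defining a genuine maximal cone, demands a careful analysis of these relations and of the Minkowski property across the fundamental weights. Once the right family of Pl\"ucker coordinates has been singled out, the injectivity and dimension count giving maximality, as well as the verification of primality, are comparatively routine.
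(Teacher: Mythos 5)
Your overall strategy is the one the paper follows: a cone of weighted PBW filtrations on $\n_-$ (the paper's $\mathcal{K}_{2n}\subset\mr^{\Phi^+}$, given by explicit facet inequalities rather than extracted from straightening relations), compatibility of the symplectic FFLV basis with every such filtration proved via the fundamental representations and the Minkowski property, and the birational-sequence dictionary turning a filtration $\bd$ into a weight $\bw_{\bd}\in\mr^{\mathcal P}$ with $\mathrm{in}_{\bw_{\bd}}(\ideal_{2n})=\ideal_{2n}^{\bd}$, which for interior $\bd$ is the toric ideal of the FFLV polytope. Up to that point your sketch matches the paper, modulo the substantial work hidden in "pin down an explicit open cone $\mathcal S$" (the paper does this by minimizing $\mathfrak d^{\bd}$ over the sets $M_I^J$ for fundamental representations and then inducting through Cartan components of tensor products).

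The genuine gap is in your maximality argument. You conclude that since $\mathcal S$ is full-dimensional and the map to $\mr^{\mathcal P}$ is injective, its image is an $n^2$-dimensional cone of the pure fan $\mathrm{Trop}(\complete_{2n})$, "which is therefore maximal." But an $n^2$-dimensional polyhedral cone \emph{contained in} a pure $n^2$-dimensional fan need not be a \emph{cone of} that fan: a priori $\mathcal C_{2n}=w(\mathcal{K}_{2n})$ could be strictly contained in the Gr\"obner cone of the toric initial ideal, i.e.\ there could be weight vectors outside $\mathcal C_{2n}$ (but in its linear span) still lying in $\mathrm{Trop}(\complete_{2n})$. The dimension count and injectivity only give containment $\mathcal C_{2n}\subseteq\mathcal C_{2n}'$ in some maximal cone with equal linear spans; it does not give equality (compare the first quadrant inside a half-plane). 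The paper closes this by an explicit facet-by-facet check: for each defining inequality of $\mathcal C_{2n}$ it exhibits a specific Pl\"ucker relation (e.g.\ $\X_{1,\ldots,i}\X_{1,\ldots,i-1,i+1,i+2}+\X_{1,\ldots,i-1,i+2}\X_{1,\ldots,i,i+1}-\X_{1,\ldots,i-1,i+1}\X_{1,\ldots,i,i+2}$ for the inequality (v)) whose initial form degenerates to a single monomial at any point of the linear span violating that inequality, so such a point cannot lie in $\mathrm{Trop}(\complete_{2n})$. Some argument of this kind --- showing that the tropical variety meets the linear span of $\mathcal C_{2n}$ only inside $\mathcal C_{2n}$ --- is indispensable and is missing from your proposal.
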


The proof of Theorem \ref{Thm:Main-Intro-1} above uses the bridge between weighted PBW degeneration of the symplectic Lie algebra and degenerations of the symplectic flag variety coming from certain degree functions on its defining ideal with respect to the Pl\"ucker embedding.

The tropical variety associated to a special case of the symplectic flag variety, namely, the tropical symplectic Grassmannian, has been studied by Olarte and the first author in \cite{BO21}. Precisely, for $k\leq n$, the symplectic Grassmannian parametrizes $k$-dimensional isotropic linear subspaces of $\mathbb{C}^{2n}$. For $k=2$, it has been shown in \textit{loc.cit.} that there is a correspondence between maximal cones of the tropical symplectic Grassmannian and maximal cones of the usual tropical Grassmannian \cite[Prop. 5.1]{BO21}. Therefore, one has a full facet description of maximal cones of the tropical symplectic Grassmannian for this case. Such a description for higher $k$, and more generally, for arbitrary tropical symplectic flag varieties, was not yet known at the point of writing this paper.\\

Let $\mathfrak{g}=\mathfrak{sp}_{2n}(\mathbb{C})$ be the symplectic Lie algebra over $\mathbb{C}$, $\mathfrak{g}=\mathfrak{n}_+\oplus\mathfrak{h}\oplus\mathfrak{n}_-$ a triangular decomposition of $\mathfrak{g}$, and $\Phi^+$ the corresponding set of positive roots. 
We introduce a full-dimensional simplicial cone $\mathcal{K}_{2n}\subseteq\mathbb{R}^{\Phi^+}$. An element $\mathbf{d}\in\mathcal{K}_{2n}$ induces a Lie algebra filtration on $\mathfrak{n}_-$ by assigning for each $\beta\in\Phi^+$, degree $\mathbf{d}(\beta)$ to a chosen basis element $f_\beta$ of weight $-\beta$.
We denote the associated graded Lie algebra by $\mathfrak{n}_-^{\mathbf{d}}$. Let $\lambda$ denote a dominant integral weight for $\mathfrak{g}$. For a finite dimensional irreducible representation $\V_\lambda$ of $\mathfrak{g}$ with a highest weight vector $\nu_\lambda$, the filtration on the enveloping algebra $U(\mathfrak{n}_-)$ obtained from that on $\mathfrak{n}_-$ induces a filtration on $\V_\lambda$ (see Section \ref{Sec:Filtration}). We denote by $\V_\lambda^{\mathbf{d}}$, the associated graded space; it is a cyclic $U(\mathfrak{n}_-^{\mathbf{d}})$-module with cyclic vector $\nu_\lambda^{\mathbf{d}}$. The space $\V_\lambda^{\mathbf{d}}$ is said to be a \emph{weighted PBW degeneration} of $\V_\lambda$. For a multi-exponent $\mathbf{s}=(s_{\beta})_{\beta\in\Phi^+}$, we will denote by $f^{\mathbf{s}}_{\mathbf{d}}$ the class of $f^{\mathbf{s}}$ in $U(\n_{-}^{\mathbf{d}})$, where $f^{\mathbf{s}}:= \prod_{\beta\in \Phi^+}f_\beta^{s_{\beta}}$ with the root vectors $f_\beta$ ordered with respect to a fixed order. Let $\set(\lambda)$ denote the set of integral points in the symplectic FFLV polytope. We prove the following result which gives the compatibility of the monomial basis labelled by this polytope with weighted PBW degenerations.

\begin{theorem}[Theorem \ref{Thm:main1}]
For every point $\mathbf{d}\in\mathcal{K}_{2n}$, the set $\{f^{\mathbf{s}}_{\mathbf{d}}\cdot\nu^{\mathbf{d}}_{\lambda}\ |\  \mathbf{s}\in \set(\lambda)\}$ forms a basis of $\V_{\lambda}^{\mathbf{d}}$. 
\end{theorem}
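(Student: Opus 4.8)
The plan is to reduce the statement to two claims: that the proposed family spans $\V_{\lambda}^{\mathbf{d}}$, and that its cardinality equals $\dim_{\mathbb{C}}\V_{\lambda}^{\mathbf{d}}$. The cardinality claim is the easy half: since $\V_{\lambda}^{\mathbf{d}}$ is, by construction, the associated graded space of a finite filtration on $\V_{\lambda}$, one has $\dim_{\mathbb{C}}\V_{\lambda}^{\mathbf{d}}=\dim_{\mathbb{C}}\V_{\lambda}$, while the equality $|\set(\lambda)|=\dim_{\mathbb{C}}\V_{\lambda}$ is part of the symplectic FFLV theorem of Feigin--Fourier--Littelmann \cite{FFL11b}, recalled in Section~\ref{Sec:SymplecticFFLV}. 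So everything comes down to showing that $\{f^{\mathbf{s}}_{\mathbf{d}}\cdot\nu^{\mathbf{d}}_{\lambda}\mid \mathbf{s}\in\set(\lambda)\}$ is a spanning set.

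To prove spanning, observe that $\V_{\lambda}^{\mathbf{d}}$ is cyclic over $U(\mathfrak{n}_-^{\mathbf{d}})$ with cyclic vector $\nu_{\lambda}^{\mathbf{d}}$, and that the images of the root vectors $f_\beta$ form a basis of $\mathfrak{n}_-^{\mathbf{d}}$ (there are $|\Phi^+|$ of them and $\dim_{\mathbb{C}}\mathfrak{n}_-^{\mathbf{d}}=\dim_{\mathbb{C}}\mathfrak{n}_-$). By the Poincar\'e--Birkhoff--Witt theorem the ordered monomials $f^{\mathbf{s}}_{\mathbf{d}}$, over all multi-exponents $\mathbf{s}$, form a basis of $U(\mathfrak{n}_-^{\mathbf{d}})$, so the vectors $f^{\mathbf{s}}_{\mathbf{d}}\cdot\nu^{\mathbf{d}}_{\lambda}$ span $\V_{\lambda}^{\mathbf{d}}$. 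It then suffices to rewrite each such vector with $\mathbf{s}\notin\set(\lambda)$ as a linear combination of vectors whose exponents lie in $\set(\lambda)$. I would carry this out by downward induction along a monomial order refining the $\mathbf{d}$-degree, feeding in the straightening relations of the \emph{abelian} symplectic PBW degeneration $\V_{\lambda}^{\mathrm{a}}$ due to Feigin--Fourier--Littelmann \cite{FFL11b}: for each lattice point $\mathbf{s}$ outside $\set(\lambda)$ there is an identity $f^{\mathbf{s}}\cdot\nu_{\lambda}=\sum_{\mathbf{t}}c_{\mathbf{t}}\,f^{\mathbf{t}}\cdot\nu_{\lambda}$ in $\V_{\lambda}$ with all $\mathbf{t}\in\set(\lambda)$, together with control of how the monomials involved sit in the PBW filtration. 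Lifting such a relation to $U(\mathfrak{n}_-)$ and passing to $\V_{\lambda}^{\mathbf{d}}$, one needs the $\mathbf{d}$-degree of $f^{\mathbf{s}}$ to be at least that of every $f^{\mathbf{t}}$ occurring and the leading $\mathbf{d}$-homogeneous part of the relation to be nonzero; the strictly lower-order corrections are then absorbed by the inductive hypothesis.

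The crux is exactly this compatibility between the FFLV straightening relations and the $\mathbf{d}$-weighted filtration, and it is the point where the defining inequalities of the cone $\mathcal{K}_{2n}$ must intervene: one must show that for \emph{every} $\mathbf{d}\in\mathcal{K}_{2n}$ no monomial occurring in the lift of an FFLV relation acquires a strictly larger $\mathbf{d}$-degree than $f^{\mathbf{s}}$, and that the leading part does not collapse. Since the symplectic FFLV relations are built from sums over chains in the poset of positive roots (symplectic Dyck paths), this should reduce to checking that each facet inequality cutting out $\mathcal{K}_{2n}$ behaves additively with respect to the roots appearing in such a chain; I expect the contribution of the long roots of $\symplectic_{2n}$ to be the delicate point, mirroring the extra difficulty already present in the classical symplectic FFLV theorem. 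An alternative organization is to place the whole statement inside the birational-sequence framework of Fourier, Littelmann and the second author that serves as this paper's main tool, identifying $\set(\lambda)$ with the set of essential exponents of the weighted PBW birational sequence; the compatibility above then becomes the assertion that this essential set does not change as the weight function varies within $\mathcal{K}_{2n}$. In either approach, once spanning is established the cardinality count closes the proof and, as a byproduct, shows that the lifted FFLV relations already generate all relations of $\V_{\lambda}^{\mathbf{d}}$.
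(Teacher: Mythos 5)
Your overall architecture (span or independence plus a cardinality count, with $|\set(\lambda)|=\dim\V_\lambda=\dim\V_\lambda^{\mathbf{d}}$ supplied by Theorem \ref{thm:FFLV-degenerate}) is sound, and you have correctly located where the mathematical content sits. But the proof has a genuine gap precisely at that location: the claim that for every $\mathbf{d}\in\mathcal{K}_{2n}$ the lifted FFLV straightening relations of \cite{FFL11b} are triangular with respect to the $\mathbf{d}$-degree --- no monomial on the right-hand side acquires larger degree than $f^{\mathbf{s}}$, and the commutator corrections produced by reordering in $U(\n_-)$ (which do arise, since $\n_-^{\mathbf{d}}$ is not abelian for $\mathbf{d}$ on the boundary of $\mathcal{K}_{2n}$) stay strictly below --- is stated as something ``one must show'' and ``I expect,'' not proved. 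That compatibility \emph{is} the theorem; without an explicit analysis of which multi-exponents $\mathbf{t}$ occur in the symplectic straightening of a given $\mathbf{s}\notin\set(\lambda)$ and a verification against the facets of $\mathcal{K}_{2n}$ (in the spirit of Proposition \ref{Prop:generalinequalities}), the argument does not close. The difficulty is real: the relations in \cite{FFL11b} are produced by applying raising operators to $f_\beta^{m+1}\nu_\lambda=0$ and are controlled there by a specific homogeneous order, not by an arbitrary point of a degree cone.

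The paper avoids re-deriving weighted straightening relations altogether and takes a different route, which is worth noting. For fundamental weights it works inside $\bigwedge^k\mathbb{C}^{2n}$ and shows (Lemmas \ref{Lem:Fund1} and \ref{Lem:Fundk}) that on the set $M_I^J$ of multi-exponents with $e_J^*(f^{\mathbf{s}}\cdot e_I)\neq 0$ the linear functional $\mathfrak{d}^{\mathbf{d}}$ is minimized at $\mathbf{s}_{I,J}$, via a reduction to comparisons $\mathfrak{d}^{\mathbf{d}}(\mathbf{s}_\sigma)$ over the symmetric group ordered by Bruhat order and the derived inequalities of Proposition \ref{Prop:generalinequalities}; this gives spanning, and counting finishes. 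For general $\lambda$ it then inducts on $\mathrm{ht}(\lambda)$ using the Minkowski sum property $\set(\lambda+\mu)=\set(\lambda)+\set(\mu)$: Lemma \ref{Lem:indep} proves \emph{linear independence} of $\{f^{\mathbf{s}}_{\mathbf{d}}\cdot(\nu_\lambda^{\mathbf{d}}\otimes\nu_\mu^{\mathbf{d}})\}$ by a max/min two-parameter ordering argument, and Proposition \ref{Prop:Cartan} identifies $\V_{\lambda+\mu}^{\mathbf{d}}$ with the Cartan component of $\V_\lambda^{\mathbf{d}}\otimes\V_\mu^{\mathbf{d}}$. So in the inductive step the paper proves independence plus count rather than spanning plus count. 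If you want to salvage your plan, the missing combinatorial verification would have to be carried out in full; alternatively, adopting the tensor-product induction reduces everything to the fundamental-weight case, where the explicit minimization is tractable.
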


For each $\mathbf{d}\in\mathcal{K}_{2n}$, we define the \emph{weighted degenerate symplectic flag variety} $\complete_{2n}^{\mathbf{d}}$ as the closure of the highest weight orbit $\overline{\exp(\mathfrak{n}_-^{\mathbf{d}})\cdot [\nu_\lambda^{\mathbf{d}}]}$ in the projective space $\mathbb{P}(\V_\lambda^{\mathbf{d}})$. We show in Theorem \ref{Thm:main2}, that the defining ideal of $\complete_{2n}^{\mathbf{d}}$ with respect to the Pl\"ucker embedding is the initial ideal of the defining ideal of the symplectic flag variety with respect to a degree function $\mathbf{w}_{\mathbf{d}}$ on the Pl\"ucker coordinates. The degree function $\mathbf{w}_{\mathbf{d}}$ induces an injective linear map $w:\mathbb{R}^{\Phi^+}\to\mathbb{R}^{\mathcal{P}}$ (See Section \ref{Sec:WeightedDeg}). Using this construction, we prove Theorem \ref{Thm:Main-Intro-1} by showing that the image of $\mathcal{K}_{2n}$ under $w$ is exactly the maximal prime cone $\mathcal{C}_{2n}$ in the tropical symplectic flag variety $\mathrm{Trop}(\complete_{2n})\subseteq\mathbb{R}^{\mathcal{P}}$.

By setting all entries of the point $\mathbf{d}\in\mathcal{K}_{2n}$ to 1, our degenerate varieties $\complete_{2n}^{\mathbf{d}}$ coincide the symplectic degenerate flag varieties $\complete_{2n}^a$ studied by Feigin, Finkelberg and Littelmann in \cite{FFL14}. This way, one also recovers the usual (non-weighted) PBW filtration on the symplectic Lie algebra and corresponding irreducible representations studied by Fourier, Feigin and Littelmann in \cite{FFL11b}. Just like the degenerate varieties in \emph{loc.cit.}, our degenerate varieties are flat degenerations (Theorem \ref{Thm:main2} and Corollary \ref{Cor:Basis}). Furthermore, we formulate and prove Borel-Weil type theorems in this setting as well (Theorems \ref{Thm:Borel1} and \ref{Thm:Borel2}).\\

Although the overall strategy of this paper is similar to that in \cite{FFFM19}, to deal with the symplectic case, we need to develop certain techniques to get control over the representation theory, geometry and related combinatorics.
\begin{enumerate}
\item[(a)] In the symplectic case, the fundamental representations are no longer minuscule, and they are only sub-representations of the exterior powers of the vector representation. We used a different argument in Section \ref{Subsec:Fund} to overcome the non-minuscule problem.
\item[(b)] Birational sequences introduced in \cite{FFL17} and the associated valuations are used to determine the defining ideal of the weighted degenerate symplectic flag varieties (Theorem \ref{Thm:main2}).
\item[(c)] We make use of the symplectic PBW-semistandard tableaux introduced in \cite{B20} and prove that they form a linear basis of the homogeneous coordinate ring of weighted degenerate symplectic flag variety with respect to the Pl\"ucker embedding.
\end{enumerate}
The point of view of \cite{FFFM19} and this work is different from that of \cite{BEZ21, BO21}: we treat flag varieties as homogeneous spaces associated to algebraic groups and make use of extra structures inherited from the group, while in \emph{loc.cit.}, the authors adopt and work with the linear algebra description of flag varieties, as flags of subspaces of the underlying vector spaces.\\

\subsection{Organisation of the paper} Section \ref{Sec:Intro} contains preliminaries on symplectic flag varieties. The main object of study, the tropical symplectic flag variety, is recalled in Section \ref{Sec:TropSymp}. In Section \ref{Sec:FFLVCone}, we treat weighted PBW degenerations of symplectic Lie algebras and their compatibility with FFLV bases. We deal with geometry of weighted PBW degenerations in section \ref{Sec:Geometry} and obtain compatibility with valuations coming from birational sequences. Lastly, in Section \ref{Sec:MaxCone}, we describe the facets of the maximal prime cone in the tropical symplectic flag variety using tools from the previous sections.

\subsection{Acknowledgements}
G.B. is grateful to Ghislain Fourier for fruitful discussions and continued support. He is funded by the Deutscher Akademischer Austauschdienst (DAAD) scholarship program: Research Grants - Doctoral Programs in Germany [Program ID 57440921]. This work is a contribution to the SFB-TRR 195 `Symbolic Tools in Mathematics and their Application' of the German Research Foundation (DFG).

\section{Preliminaries: symplectic flag varieties}\label{Sec:Intro}

For $k\in\mathbb{N}$ we denote $[k] :=\{1,2,\ldots,k\}$.

\subsection{Symplectic flag varieties}\label{Sec:SympFlg}

Let $\mathbb{C}^{2n}$ be a $2n$-dimensional complex vector space with standard basis $\{e_1,\ldots,e_{2n}\}$. For $i=1,\ldots,2n$, we will denote $\overline{i}:=2n+1-i$. This operation $\overline{\cdot}$ is an involution. The total order on $\mathbb{Z}$ induces the following order $1<2<\ldots<n<\overline{n}<\ldots<\overline{2}<\overline{1}$. If not mentioned otherwise, all vector spaces and Lie algebras are over $\mathbb{C}$. Let $\mathfrak{gl}_{2n}$ denote the Lie algebra on $2n\times 2n$ matrices with the commutator of two matrices as Lie bracket. The rows and columns of such a matrix will be indexed by $1,2,\ldots,n,\overline{n},\ldots,\overline{1}$.

Let $S$ be the square matrix of size $n$ with $1$ on the anti-diagonal and $0$ elsewhere. We set $J=\begin{pmatrix}0 & S\\ -S & 0\end{pmatrix}$ to be a square matrix of size $2n$. The symplectic Lie algebra $\mathfrak{sp}_{2n}$ is defined as a Lie-subalgebra of $\mathfrak{gl}_{2n}$:
$$\mathfrak{sp}_{2n}=\{X\in\mathfrak{gl}_{2n}\mid X^t J+JX=0\}.$$

The Lie-subalgebra $\mathfrak{h}$ consisting of diagonal matrices in $\mathfrak{sp}_{2n}$ is a Cartan subalgebra. We fix the following triangular decomposition: $\mathfrak{sp}_{2n}=\n_{+}\oplus  \h \oplus \n_{-}$ where $\n_+$ (resp. $\n_-$) consists of strictly upper-triangular (resp. strictly lower-triangular) matrices in $\mathfrak{sp}_{2n}$. The corresponding universal enveloping algebras will be denoted by $U(\n_+)$ and $U(\n_-)$.

Let $\Phi^{+}$ denote the corresponding set of positive roots and $\alpha_1,\ldots,\alpha_n$ be the simple roots for $\mathfrak{sp}_{2n}$. The positive roots can be divided into two sets namely:
\begin{align*}
\alpha_{i,j} &:= \alpha_i + \alpha_{i+1}+\ldots+\alpha_{j},\,\, \qquad\qquad\qquad\qquad 1\leq i\leq j\leq n;\\
\alpha_{i,\overline{j}} &:= \alpha_i + \alpha_{i+1}+\ldots+\alpha_{n}+\alpha_{n-1}+\ldots+\alpha_{j},\,\, 1\leq i\leq j < n.
\end{align*}
We will formally denote $\alpha_{i,\overline{n}}:=\alpha_{i,n}$. 


For each $\beta \in \Phi^{+}$, we fix a non-zero root vector $f_{\beta}\in (\n_{-})_{\beta}$ of weight $-\beta$ in the following way, where for $1\leq i\leq j\leq n$ we use the abbreviations $ f_{i,j}:=f_{\alpha_{i,j}}$ and $f_{i,\overline{j}}:=f_{\alpha_{i,\overline{j}}}$:
$$
\begin{array}{rcll}
f_{i,\overline{i}} &:=& E_{\overline{i},i}, & \text{ for } 1 \leq i \leq n,\\
f_{i,j} & := & E_{j+1,i}-E_{\overline{i},\overline{j+1}},& \text{ for } 1\leq i\leq j <n,\\
f_{i,\overline{j}} & := & E_{\overline{j},i}+E_{\overline{i},j}, & \text{ for } 1\leq i < j\leq n,
\end{array}
$$
where $E_{p,q}$ is the matrix  with zeros everywhere except for the entry 1 in the $p$-th row and $q$-th column.


Let $\{\omega_1,\ldots,\omega_n\}$ be the set of fundamental weights of $\symplectic_{2n}$ and $\Lambda^+:=\mathbb{N}\omega_1+\ldots+\mathbb{N}\omega_n$ be the monoid generated by them: elements in $\Lambda^+$ are dominant integral weights. For $\lambda= m_1\omega_1 +\ldots + m_n\omega_n\in\Lambda^+$, let $\V_{\lambda}$ denote the finite-dimensional irreducible $\symplectic_{2n}$-representation with highest weight $\lambda$. Fix a highest weight vector $\hv\in \V_{\lambda}$, we have: $\V_{\lambda}=U(\n_-)\cdot\hv$.  

Let $\N$ denote the simply connected Lie group  corresponding to $\n_-$. Then $\V_{\lambda}$ is a representation of $\N$, hence we have an action of $\N$ on the projectivization $\mathbb{P}(\V_{\lambda})$. Let $[\hv]$ denote the highest weight line through $\hv$ in $\mathbb{P}(\V_{\lambda})$.

Recall that a weight $\lambda=m_1\omega_1+\ldots+m_n\omega_n\in\Lambda^+$ is called regular, if $m_1,\ldots,m_n>0$.

\begin{definition}\label{Def:SymplecticFlag}
For a regular weight $\lambda\in\Lambda^+$, the complete symplectic flag variety is defined to be the orbit closure $\overline{\N\cdot [\hv]}$ in $\mathbb{P}(\V_{\lambda})$. 
\end{definition}

The isomorphism type of the projective variety $\overline{\N\cdot [\hv]}$ is independent of the choice of a regular weight $\lambda$: we will denote it simply by $\complete_{2n}$.

\subsection{Pl\"ucker embedding of symplectic flag varieties} 

We consider the fundamental representation $\V_{\omega_k}$ of $\mathfrak{sp}_{2n}$ with highest weight vector $\nu_{\omega_k}$. The representation $\V_{\omega_1}$ is the vector representation $\mathbb{C}^{2n}$ of $\mathfrak{sp}_{2n}$ with $\nu_{\omega_1}=e_1$. The fundamental representation $\V_{\omega_k}$ is a subrepresentation of the $k$-th exterior power of $\V_{\omega_1}$:
\begin{equation}\label{Eq:Embed}
\V_{\omega_k} \hookrightarrow {\bigwedge}^k \mathbb{C}^{2n}, \quad \nu_{\omega_k}\mapsto e_{1}\wedge\ldots \wedge e_{k}.
\end{equation}
For $\Jm=\{j_1,\ldots,j_k\} \subset \{1,\ldots,n ,\overline{n},\ldots,\overline{1}\}$ with $j_1<\ldots<j_k$, we denote  $e_{\Jm} :=e_{j_1}\wedge\ldots \wedge e_{j_k}$.

Let $\lambda=m_1\omega_1+\ldots+m_n\omega_n\in\Lambda^+$ be a regular weight. The irreducible representation $\V_\lambda$ can be realized as the Cartan component of the representation 
$$\U_{\lambda} =\V_{\omega_1}^{\otimes m_1}\otimes \ldots\otimes \V_{\omega_n}^{\otimes m_n}$$
via
$$\V_\lambda\hookrightarrow \U_\lambda,\ \ \nu_\lambda\mapsto u_\lambda:=
\nu_{\omega_1}^{\otimes m_1}\otimes \ldots\otimes \nu_{\omega_n}^{\otimes m_n}\in \U_{\lambda}.$$
This implies: $\complete_{2n}$ is isomorphic to $\overline{\N\cdot[u_\lambda]}\hookrightarrow\mathbb{P}(\U_\lambda)$.

We consider the following embedding of varieties
$$\mathbb{P}_n:=\mathbb{P}(\V_{\omega_1})\times\ldots\times \mathbb{P}(\V_{\omega_n})\xhookrightarrow{} \mathbb{P}(\V_{\omega_1})^{m_1}\times\ldots\times \mathbb{P}(\V_{\omega_n})^{m_n}\xhookrightarrow{} \mathbb{P}(\U_{\lambda})$$
where the first embedding is given by the diagonal embedding of $\mathbb{P}(\V_{\omega_i})$ into $\mathbb{P}(\V_{\omega_i})^{m_i}$, and the second one is the Segre embedding. According to the definition of the Segre embedding, $[u_\lambda]$ is the image of $([\nu_{\omega_1}],\ldots,[\nu_{\omega_n}])$ under the above embedding. It follows that for $x\in \mathrm{N}$, $[x\cdot u_\lambda]$ is the image of $([x\cdot \nu_{\omega_1}],\ldots,[x\cdot \nu_{\omega_n}])$ under the above embedding. We have therefore an embedding $\complete_{2n}\hookrightarrow \mathbb{P}_n$.

Composing with the embedding in (\refeq{Eq:Embed}), we obtain the Pl\"ucker embedding of $\complete_{2n}$:
\begin{equation}\label{Pluecker}
\complete_{2n}=\overline{\N\cdot[\hv]} \hookrightarrow{} \mathbb{P}_{n} \hookrightarrow{} \mathbb{P}\Big({\bigwedge}^1\mathbb{C}^{2n} \Big)\times \ldots\times \mathbb{P}\Big({\bigwedge}^n\mathbb{C}^{2n} \Big).
\end{equation}

\subsection{Defining relations}\label{Sec:DefRel}
We describe the defining ideal of $\complete_{2n}$ with respect to the Pl\"ucker embedding. As has been shown by de Concini in \cite{Dec79}, the defining ideal is generated by two kinds of relations: the usual quadratic Pl\"ucker relations, and certain linear relations.

Let $I=\{i_1,\ldots,i_d\}$ and $J=\{j_1,\ldots,j_d\}$ be two subsets of $\{1,\ldots,n\}$ with $i_1<\ldots<i_d$ and $j_1<\ldots<j_d$. The dominance partial order is defined by:
\begin{equation}\label{Dom}
I\leq J\text{ if and only if for any }1\leq k\leq d,\  \ i_k\leq j_k.
\end{equation}

For an ordered tuple $\Jm=(j_1,\ldots,j_d)$ with $1\leq j_1<\ldots<j_d\leq\overline{1}$, we set $\X_{\Jm}=\X_{j_1,\ldots,j_d}:=(e_{\Jm})^*\in \Big(\bigwedge^d\mathbb{C}^{2n} \Big)^*$ to be the corresponding homogeneous coordinate on $\mathbb{P}\Big(\bigwedge^d\mathbb{C}^{2n} \Big)$. Let $\mathcal{P}$ be the set of all such ordered tuples $\mathrm{J}$ of length $d$ for $d=1,\ldots,n$.  The (multi-)homogeneous coordinate ring of $\mathbb{P}\Big({\bigwedge}^1\mathbb{C}^{2n} \Big)\times \ldots\times \mathbb{P}\Big({\bigwedge}^n\mathbb{C}^{2n} \Big)$ is the polynomial ring $\mathcal{S}:=\mathbb{C}[\X_{\Jm}\mid \Jm\in\mathcal{P}]$.

The notation $\X_{\Jm}$ will be extended to all tuples by requiring for all $d=1,\ldots,n$ and $1\leq j_1<\ldots<j_d\leq \overline{1}$: 
$$\X_{j_{\sigma(1)},\ldots,j_{\sigma(d)}}:=(-1)^{\ell(\sigma)}\X_{j_1,\ldots,j_d}$$ 
where $\sigma\in\mathfrak{S}_d$ and $\ell(\sigma)$ is its inversion number.

We introduce some special elements in $\mathcal{S}$:
\begin{enumerate}
\item Let $\Lm, \Jm\subset \{1,\ldots,n,\overline{n},\ldots, \overline{1}\}$ be two tuples of lengths $p$ and $q$ respectively, where $1\leq q\leq p \leq n$. Suppose $\Lm=(l_1,\ldots,l_p)$ with $l_1<\ldots <l_p$, $\Jm =(j_1,\ldots,j_q)$ with $j_1<\ldots<j_q$ and $1\leq s\leq q$, we define the quadratic Pl\"ucker polynomial 
\begin{align}\label{eqn:relations-Cn}
   R_{\Lm,\Jm}^s:= \X_{\Lm} \X_{\Jm}- \sum_{1\leq r_1<\ldots<r_s\leq p}\X_{\Lm'}\X_{\Jm'},
\end{align} 
where 
$$\Lm'=(l_1,\ldots,l_{r_1-1},j_1,l_{r_1+1},\ldots,l_{r_s-1},j_s,l_{r_s+1},\ldots,l_p)$$
and
$$\Jm'=(l_{r_1},\ldots,l_{r_s},j_{s+1},\ldots,j_q).$$
\item We fix $1\leq k\leq n$ and let $\I_1=\{x_1,\ldots,x_\ell\}$, $\I_2=\{y_1,\ldots,y_{k-\ell}\}$ be subsets of $\{1,\ldots,n\}$ with $x_1<\ldots<x_\ell$ and $y_1<\ldots<y_{k-\ell}$. Let $\Gamma:=\I_1\cap \I_2$, $\Gamma=\{\gamma_1,\ldots,\gamma_t\}$ with $\gamma_1<\ldots<\gamma_t$ (note that $\Gamma$ could be the empty set). We define the tuple
$$(\I_1,\I_2):=(\gamma_1,\overline{\gamma_1},\ldots,\gamma_t,\overline{\gamma_t},a_1,\ldots,a_{\ell-t},\overline{b_{k-\ell-t}},\ldots,\overline{b_1})$$
where 
$$\I_1\setminus\Gamma=\{a_1,\ldots,a_{\ell-t}\}=:\tilde{\I}_1\text{ with }a_1<\ldots<a_{\ell-t},$$
$$\I_2\setminus\Gamma=\{b_1,\ldots,b_{k-\ell-t}\}=:\tilde{\I}_2\text{ with }b_1<\ldots<b_{k-\ell-t}.$$

Such a tuple $(\I_1,\I_2)$ is said to be \emph{reverse-admissible} if there exists a subset $\Tset\subset\{1,\ldots,n\}\setminus (\I_1\cup\I_2)$ with $|\Tset|=|\I_1\cap\I_2|$ and $\Tset< (\I_1\cap\I_2)$ (see \eqref{Dom} for the definition of this partial order). 

Notice that if $\I_1\cap \I_2=\emptyset$ then $(\I_1,\I_2)$ is reverse-admissible. Now assume that $(\I_1,\I_2)$ is not reverse-admissible with $\I_1\cap\I_2=\{\gamma_1,\ldots,\gamma_t\}=:\Gamma\neq\emptyset$. Choose $1\leq h_0\leq t$ to be minimal such that there exists a tuple $\Tset\subset \{1,\dots,n\}\setminus (\I_1\cup \I_2)$ of length $t-h_0$ with 
$ \Tset < (\gamma_{h_0+1},\dots,\gamma_t)$. Let $\Tset_{h_0+1}=\{\lambda_{h_0+1},\dots,\lambda_t\}$ be maximal (with respect to the partial order in \eqref{Dom}) among those $\Tset$. Finally we choose the maximal $b\in\{h_0+1,\dots,t\}$ such that $(\lambda_{h_0+1},\dots,\lambda_b) < (\gamma_{h_0},\dots,\gamma_{b-1})$,
or set $b=h_0$ if no such $b$ exists. Now set $\tilde\Gamma:=(\gamma_{h_0},\dots,\gamma_b)$ and $\mathsf{F}=\Gamma\backslash \tilde{\Gamma}$. We define a linear polynomial for non-reverse-admissible tuple $(\I_1,\I_2)$:
\begin{equation}\label{eqn:linear-relations}
    S_{(\I_1,\I_2)}:=\X_{(\I_1,\I_2)}-(-1)^{b-h_0+1}\sum_{\Gamma'}\X_{(\tilde{\I}_1\cup\mathsf{F}\cup \Gamma',\tilde{\I}_2\cup\mathsf{F}\cup \Gamma')},
\end{equation}
where the sum runs over those $\Gamma'$ satisfying $|\tilde{\Gamma}|=|\Gamma'|$ and $\Gamma'\cap (\I_1\cup\I_2)=\emptyset$.
\end{enumerate}

\begin{example}\label{Ex:sp4}
When $n=2$, the ideal $\mathfrak{I}_{4}$ is generated by the following polynomials:
$$\X_{12}\X_{\ov{2}}+\X_{2\ov{2}}\X_1-\X_{1\ov{2}}\X_2,\ \ \X_{1\ov{2}}\X_{\ov{1}}+\X_{\ov{2}\ov{1}}\X_1-\X_{1\ov{1}}\X_{\ov{2}},$$
$$\X_{2\ov{2}}\X_{\ov{1}}+\X_{\ov{2}\ov{1}}\X_2-\X_{2\ov{1}}\X_{\ov{2}},\ \ \X_{12}\X_{\ov{1}}+\X_{2\ov{1}}\X_1-\X_{1\ov{1}}\X_{2},$$
$$\X_{12}\X_{\ov{2}\ov{1}}-\X_{1\ov{2}}\X_{2\ov{1}}+\X_{1\ov{1}}\X_{2\ov{2}},\ \ \X_{1\ov{1}}+\X_{2\ov{2}}.$$
\end{example}

\begin{definition} 
Let $\ideal_{2n}$ denote the ideal in $\mathcal{S}$ generated by the quadratic polynomials $R_{\Lm,\Jm}^s$ for all possible $\Lm,\Jm,s$ and linear relations $S_{(\I_1,\I_2)}$ with non-reverse-admissible tuples $(\I_1,\I_2)$. 
\end{definition}

The following theorem is proved in \cite{Dec79} when the linear polynomials $S_{(\I_1,\I_2)}$ are defined for \emph{admissble} tuples. The above form for \emph{reverse-admissible} tuples is proved in \cite{B20}.

\begin{theorem}[\cite{Dec79, B20}]\label{Thm:DefIdeal}
The (prime) defining ideal of $\complete_{2n}$ with respect to the Pl\"ucker embedding \eqref{Pluecker} is precisely $\ideal_{2n}$.
\end{theorem}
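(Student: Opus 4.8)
The plan is to deduce Theorem~\ref{Thm:DefIdeal} from the two citations \cite{Dec79} and \cite{B20} by reconciling the description of the linear relations. First I would recall de Concini's result: he proves that the symplectic flag variety $\complete_{2n}$, realised inside a product of projective spaces via the Pl\"ucker embedding, has homogeneous coordinate ring given by the quotient of $\mathcal{S}$ by the ideal generated by (i) the straightening-type quadratic Pl\"ucker relations $R_{\Lm,\Jm}^s$ (these are the same quadratic relations that cut out the ordinary type ${\tt A}$ flag variety), and (ii) a family of linear relations expressing every Pl\"ucker coordinate $\X_{(\I_1,\I_2)}$ indexed by a \emph{non-admissible} tuple in terms of ones indexed by admissible tuples, the non-admissibility being measured against the symplectic form $\omega$ (equivalently, against $\J$). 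The key point is that de Concini's linear relations and the ones in \eqref{eqn:linear-relations} generate the same linear subspace of $\mathcal{S}_{\le 1}$ modulo the quadratic relations, because ``admissible'' and ``reverse-admissible'' tuples both index bases of the degree-one part of the coordinate ring.

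The main step is therefore the following: show that the $\mathbb{C}$-span of the linear forms $S_{(\I_1,\I_2)}$ over non-reverse-admissible tuples equals the $\mathbb{C}$-span of de Concini's linear forms over non-admissible tuples. I would argue this in two stages. Stage one: observe that the set of reverse-admissible tuples of each fixed length $d$ has the same cardinality as the set of admissible tuples of length $d$ (both are counted by the number of symplectic standard Young tableaux / King tableaux of the appropriate rectangular shape), and that each indexes a spanning set of the weight-$\omega_d$ isotypic piece $\V_{\omega_d}\subset\bigwedge^d\mathbb{C}^{2n}$ — indeed $\complete_{2n}$ lies in $\prod_d\mathbb{P}(\V_{\omega_d})$, so the restriction of $\X_{\Jm}$ to $\complete_{2n}$ only depends on $\Jm$ through its image in $\V_{\omega_d}^*$, and both families of tableaux give bases of $\V_{\omega_d}^*$. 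This is exactly the content of the combinatorial results in \cite{B20} on symplectic PBW-semistandard tableaux, which I would cite. Stage two: since both $\{\X_{\Jm}:\Jm\text{ reverse-admissible}\}$ and $\{\X_{\Jm}:\Jm\text{ admissible}\}$ descend to bases of the same space $\V_{\omega_d}^*$, the linear relations $S_{(\I_1,\I_2)}$ — which by construction rewrite $\X_{(\I_1,\I_2)}$ in the reverse-admissible basis — span the same kernel $\ker(\mathcal{S}_{\le 1}\twoheadrightarrow\bigoplus_d\V_{\omega_d}^*)$ as de Concini's relations do. Combined with the fact that the quadratic relations $R_{\Lm,\Jm}^s$ are literally the same in both presentations, this gives $\ideal_{2n}=I(\complete_{2n})$, and primeness is inherited from \cite{Dec79} since $\complete_{2n}$ is irreducible (a $\mathrm{Sp}_{2n}$-orbit closure).

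I would also need to verify the bookkeeping in the definition preceding \eqref{eqn:linear-relations}: that for a non-reverse-admissible $(\I_1,\I_2)$ the recipe producing $h_0$, $\Tset_{h_0+1}$, $b$, $\tilde\Gamma$ and $\mathsf{F}$ is well-defined, and that every tuple $(\tilde{\I}_1\cup\mathsf{F}\cup\Gamma',\tilde{\I}_2\cup\mathsf{F}\cup\Gamma')$ appearing on the right-hand side is strictly closer to reverse-admissible than $(\I_1,\I_2)$ (e.g.\ in a suitable lexicographic measure), so that iterated application of the $S_{(\I_1,\I_2)}$ terminates and indeed expresses every $\X_{\Jm}$ in the reverse-admissible basis; this termination is what pins down that the span is the full kernel, not a proper subspace. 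The bulk of this is already established in \cite{B20}, so the proof here is mostly a matter of assembling these inputs; I would make the $n=2$ case of Example~\ref{Ex:sp4} the sanity check, noting that there $\X_{1\bar1}+\X_{2\bar2}$ is the unique linear relation and $(\I_1,\I_2)=(\bar1,1)$ versus $(1,\bar1)$ illustrates the reverse-admissible normalisation.

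The hard part will be the combinatorial comparison of the reverse-admissible and admissible straightening procedures — i.e.\ checking that \eqref{eqn:linear-relations} really is a valid ``straightening law'' whose repeated use reaches the reverse-admissible basis — since everything else (the quadratic relations, irreducibility, primeness) transfers verbatim from \cite{Dec79}. I expect to quote \cite{B20} for precisely this point rather than reprove it.
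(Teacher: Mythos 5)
Your proposal matches the paper's treatment: the paper offers no proof of Theorem~\ref{Thm:DefIdeal} beyond the remark that the admissible-tuple version is due to de~Concini \cite{Dec79} and the reverse-admissible reformulation is established in \cite{B20}, which is exactly the division of labour you describe. Your outline of how the two presentations are reconciled (identical quadratic relations, both linear families spanning the kernel onto $\bigoplus_d\V_{\omega_d}^*$, termination of the straightening, primeness from irreducibility) is a reasonable reconstruction of the argument the citations supply, and correctly defers the hard combinatorics to \cite{B20}.
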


The polynomial ring $\mathcal{S}$ is $\Lambda^+$-graded by assigning $\mathrm{deg}(\X_{\Jm})=\omega_{|\Jm|}$.

We will denote by $\mathbb{C}[\complete_{2n}]$ the (multi)-homogeneous coordinate ring of $\complete_{2n}$. According to Theorem \ref{Thm:DefIdeal}, $\mathbb{C}[\complete_{2n}]\cong \mathcal{S}/\ideal_{2n}$. Since both the ring $\mathcal{S}$ and the ideal $\ideal_{2n}$ are $\Lambda^+$-graded, so is $\mathbb{C}[\complete_{2n}]$. For a fixed $\lambda\in\Lambda^+$ we let $\mathbb{C}[\complete_{2n}]_\lambda$ denote the homogeneous component of degree $\lambda$. Moreover, in the following decomposition
$$\mathbb{C}[\complete_{2n}]=\bigoplus_{\lambda\in\Lambda^+}\mathbb{C}[\complete_{2n}]_\lambda,$$
each graded component $\mathbb{C}[\complete_{2n}]_\lambda$ is isomorphic to $\V_\lambda^*$ as $\mathfrak{sp}_{2n}$-modules \cite{Dec79}.

\section{Tropical symplectic flag varieties}\label{Sec:TropSymp}
In  this section, we consider the tropicalization of symplectic flag varieties using the defining ideal $\mathfrak{I}_{2n}\subseteq \mathcal{S}$ (according to Theorem \ref{Thm:DefIdeal}). The building blocks for tropical symplectic flag varieties, namely the tropical symplectic Grassmannians, have been studied by the second author and Olarte in \cite{BO21}, in the spirit of Speyer and Sturmfels' work on the tropical Grassmannian \cite{SS04}. 

In \cite{BO21}, it was shown that there are five equivalent characterizations of sympelctic Grassmannians, that turn out not to be equivalent tropically. This makes tropicalization in the symplectic setting subtle, nonetheless, a complete characterization was given. For the current paper, we choose the most natural construction out of the five characterizations for the tropical symplectic flag variety case, namely, the one given by taking the tropical variety of the tropicalization of the ideal $\mathfrak{I}_{2n}$, and we seek to understand its fan structure. For a comprehensive introduction to tropical geometry, we refer the reader to \cite{MS}.\\

Consider the Pl\"ucker embedding of $\complete_{2n}$ in \eqref{Pluecker}:
$$
\complete_{2n}\hookrightarrow{} \mathbb{P}\Big({\bigwedge}^1\mathbb{C}^{2n} \Big)\times \ldots\times \mathbb{P}\Big({\bigwedge}^n\mathbb{C}^{2n} \Big).
$$

For a polynomial 
$$f=\sum_{\mathbf{u}\in\mathbb{N}^{\mathcal{P}}}\lambda_{\mathbf{u}}X^{\mathbf{u}}\in\mathcal{S},$$ 
its initial form with respect to $\mathbf{v}\in\mathbb{R}^{\mathcal{P}}$ is defined to be
$$\mathrm{in}_{\mathbf{v}}(f):=\sum_{\mathbf{u}\cdot\mathbf{v}\text{ minimal}}\lambda_{\mathbf{u}}X^{\mathbf{u}}.$$
The initial ideal of $\mathfrak{I}_{2n}$ with respect to $\mathbf{v}\in\mathbb{R}^{\mathcal{P}}$ is defined as the ideal generated by the initial forms of all polynomial in $\mathfrak{I}_{2n}$ with respect to $\mathbf{v}$: 
$$\mathrm{in}_{\mathbf{v}}(\mathfrak{I}_{2n}):=\langle\mathrm{in}_{\mathbf{v}}(f)\mid f\in \mathfrak{I}_{2n}\rangle\subseteq \mathcal{S}.$$

From the above construction, it follows that every point $\mathbf{v}\in\mathbb{R}^{\mathcal{P}}$ defines a Gr\"obner degeneration of $\complete_{2n}$. Consider an equivalence relation on $\mathbb{R}^{\mathcal{P}}$ given by setting $\mathbf{v}\sim \mathbf{v}' $ whenever 
$$\mathrm{in}_{\mathbf{v}}(\mathfrak{I}_{2n}) = \mathrm{in}_{\mathbf{v}'}(\mathfrak{I}_{2n}).$$ Each equivalence class corresponds to points in the relative interior of a convex rational polyhedral cone in $\mathbb{R}^{\mathcal{P}}$. The collection of all such cones defines the Gr\"obner fan of $\complete_{2n}$. We are interested in the following subfan of this Gr\"obner fan.
 
\begin{definition}\label{Def:SymplecticTropical}
The tropical symplectic flag variety with respect to the Pl\"ucker embedding is defined by
$$\mathrm{Trop}(\complete_{2n}):=\{\mathbf{v}\in\mathbb{R}^{\mathcal{P}}\mid \mathrm{in}_{\mathbf{v}}(\mathfrak{I}_{2n})\text{ contains no monomials},\ \mathbf{v}([d])=0, d=1,\ldots,n\}.$$
\end{definition}

The normalization $\mathbf{v}([d])=0$ is chosen as a special section of the quotient map according to the multi-homogeneous property of the Pl\"ucker embedding.

By the Bieri-Groves theorem (\cite{BG84}), $\mathrm{Trop}(\complete_{2n})$ is a pure polyhedral fan in $\mathbb{R}^{\mathcal{P}}$ of dimension $n^2$. For example, $\mathrm{Trop}(\complete_{4})$ is a pure $4$-dimensional fan in $\mathbb{R}^{10}$ with a $2$-dimensional lineality space having $10$ rays and $15$ maximal cones (see \cite[Example 9.4]{BO21}).

The following notion is introduced in \cite{BLMM17}: a maximal cone in $\mathrm{Trop}(\complete_{2n})$ is called \emph{prime}, if the initial ideal associated to a point (hence any point) in its relative interior is a prime ideal. 

Giving explicit descriptions of all polyhedral cones in $\mathrm{Trop}(\complete_{2n})$ seems to be only possible when $n$ is very small. One of the goals of this paper is to prove the following theorem that generalizes results in the same direction for type ${\tt A}$ flag varieties in \cite{FFFM19}.

\begin{theorem}\label{Thm:main3} There is a maximal cone $\mathcal{C}_{2n}$ in the tropical symplectic flag variety $\mathrm{Trop}(\complete_{2n})$:
\begin{itemize}
    \item[(1)] that is a prime cone with explicit facet description given in Lemma \ref{lem:maximalcone}, and
    \item[(2)] such that every point in its relative interior provides a Gr\"obner degeneration of $\complete_{2n}$ to the toric variety associated with the symplectic FFLV polytope (see Section \ref{Sec:SymplecticFFLV} for the definition of this polytope).
\end{itemize}
\end{theorem}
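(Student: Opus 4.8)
The plan is to construct $\mathcal{C}_{2n}$ not abstractly but as the image of an explicit polyhedral cone on the Lie-theoretic side, and then to transfer all the needed properties (being a cone in $\mathrm{Trop}(\complete_{2n})$, maximality, primality, and the identification of the Gr\"obner degeneration) across the linear map $w$. Concretely, I would first set up the combinatorial cone $\mathcal{K}_{2n}\subseteq\mathbb{R}^{\Phi^+}$ of admissible degree vectors $\bd$ so that for each $\bd$ in it, the associated graded $\n_-^{\bd}$ is a well-behaved (abelianization-like) degeneration; its facets should be read off from the relations needed to make the weighted PBW machinery work, and these are exactly what Lemma \ref{lem:maximalcone} records. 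The key structural input is the injective linear map $w\colon\mathbb{R}^{\Phi^+}\to\mathbb{R}^{\mathcal{P}}$ coming from the degree function $\w_{\bd}$ on Pl\"ucker coordinates; I would then \emph{define} $\mathcal{C}_{2n}:=w(\mathcal{K}_{2n})$ (intersected with the normalization slice $\bv([d])=0$) and verify $\dim\mathcal{C}_{2n}=\dim\mathcal{K}_{2n}=\#\Phi^+=n^2$, which matches the pure dimension of $\mathrm{Trop}(\complete_{2n})$ from Bieri--Groves, so that maximality follows once we know $\mathcal{C}_{2n}\subseteq\mathrm{Trop}(\complete_{2n})$.

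The heart of the argument is Theorem \ref{Thm:main2}: for $\bd\in\mathcal{K}_{2n}$ the initial ideal $\mathrm{in}_{\w_{\bd}}(\ideal_{2n})$ is precisely the defining ideal of $\complete_{2n}^{\bd}$ with respect to the Pl\"ucker embedding. Granting that, I would argue as follows. First, since $\complete_{2n}^{\bd}$ is an irreducible variety (it is an orbit closure $\overline{\exp(\n_-^{\bd})\cdot[\nu_\lambda^{\bd}]}$), its defining ideal is prime; hence $\mathrm{in}_{\w_{\bd}}(\ideal_{2n})$ is prime, in particular it contains no monomial, so $w(\bd)\in\mathrm{Trop}(\complete_{2n})$ for every $\bd$ in the relative interior of $\mathcal{K}_{2n}$ --- and by continuity/closedness of the tropical variety for all of $\mathcal{K}_{2n}$. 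This simultaneously shows $\mathcal{C}_{2n}\subseteq\mathrm{Trop}(\complete_{2n})$ and that the initial ideal at an interior point is prime, giving part (1)'s primality claim. For part (2), I would use Theorem \ref{Thm:main1}: the FFLV monomials $\{f^{\bs}_{\bd}\cdot\nu^{\bd}_\lambda\mid \bs\in\set(\lambda)\}$ form a basis of $\V^{\bd}_\lambda$, hence the character and Hilbert function of $\complete_{2n}^{\bd}$ agree with those of the toric variety attached to the symplectic FFLV polytope; combined with the fact that $\mathrm{in}_{\w_{\bd}}(\ideal_{2n})$ is generated by monomials and binomials coming from the weighted PBW relations (the quadratic Pl\"ucker relations degenerate to binomials, the linear $S$-relations degenerate to monomials), one concludes the degeneration is the normal toric variety of that polytope --- using that the symplectic FFLV polytope is normal and that flatness (Corollary \ref{Cor:Basis}) pins down the limit.

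The facet description in Lemma \ref{lem:maximalcone} then has to be matched with the equivalence-class-of-$\mathrm{in}_{\bv}$ description of a Gr\"obner cone: I would show that for $\bv=w(\bd)$ with $\bd$ in the interior of $\mathcal{K}_{2n}$, the initial ideal $\mathrm{in}_{\bv}(\ideal_{2n})$ is the same monomial/binomial ideal for all such $\bd$, so the whole relative interior of $\mathcal{C}_{2n}$ lies in one Gr\"obner cone, and conversely that crossing any facet of $\mathcal{K}_{2n}$ changes which term of some Pl\"ucker relation is selected, hence changes the initial ideal --- so $\mathcal{C}_{2n}$ is exactly one Gr\"obner cone, and its facets are the images under $w$ of the facets of $\mathcal{K}_{2n}$, which is the content of Lemma \ref{lem:maximalcone}. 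The main obstacle, I expect, is Theorem \ref{Thm:main2} itself: proving that $\mathrm{in}_{\w_{\bd}}(\ideal_{2n})$ equals the defining ideal of $\complete_{2n}^{\bd}$ and not merely contains it. This requires a dimension/Hilbert-function count showing no extra relations appear in the initial ideal, and this is exactly where the symplectic PBW-semistandard tableaux of \cite{B20} and the birational-sequence valuations of \cite{FFL17} enter --- one needs the tableaux to give a spanning set of $\mathcal{S}/\mathrm{in}_{\w_{\bd}}(\ideal_{2n})$ of the right size in each multidegree $\lambda$, matching $\dim\V_\lambda$, which forces equality. The non-minuscule nature of the fundamental representations of $\symplectic_{2n}$ makes the bookkeeping for this count genuinely more delicate than in type ${\tt A}$, and is the step I would allocate the most care to.
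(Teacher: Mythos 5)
Your overall strategy coincides with the paper's: set $\mathcal{C}_{2n}:=w(\mathcal{K}_{2n})$, use Theorem \ref{Thm:main2} to identify $\mathrm{in}_{\mathbf{w}_{\mathbf{d}}}(\ideal_{2n})$ with the prime defining ideal of $\complete_{2n}^{\mathbf{d}}$, deduce containment in $\mathrm{Trop}(\complete_{2n})$ and primality, and obtain maximality from the dimension count $n^2$ together with an analysis of what happens upon leaving the cone; part (2) likewise rests on the FFLV basis and the chart $\varphi^{\mathbf{d}}$. Two of your steps, however, are wrong or incomplete as written. First, ``prime, in particular it contains no monomial'' is not a valid implication ($(\X_J)$ is prime and contains a monomial); what is needed, and what the paper uses, is that no Pl\"ucker coordinate vanishes identically on $\complete_{2n}^{\mathbf{d}}$ (equivalently $p_J^{\mathbf{d}}=\mathrm{in}_{\mathbf{d}}(p_J)\neq 0$), which together with primality excludes monomials. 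Second, your claim that ``the linear $S$-relations degenerate to monomials'' is self-defeating: a monomial in $\mathrm{in}_{\mathbf{w}}(\ideal_{2n})$ would mean $w(\mathbf{d})\notin\mathrm{Trop}(\complete_{2n})$. In fact the linear relations degenerate to linear \emph{binomials} that identify Pl\"ucker variables (for $n=2$, $\X_{1\ov{1}}+\X_{2\ov{2}}$ survives intact, matching the equality $s_{1,\ov{1}}=s_{2,\ov{2}}$ in Lemma \ref{lem:maximalcone}).

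Two further points where your sketch is thinner than the actual argument. For part (2), identifying \emph{which} toric variety arises is done in the paper not by a Hilbert-function-plus-normality argument but by observing that for interior $\mathbf{d}$ each $\varphi^{\mathbf{d}}(\X_J^{\mathbf{d}})$ is the single monomial $t^{\mathbf{s}_{[k],J}}z_k$, so the degree-$\lambda$ part of $\mathcal{S}^{\mathbf{d}}/\ideal_{2n}^{\mathbf{d}}$ is the span of monomials with exponents in $\set(\lambda)$, using the Minkowski property $\set(\lambda+\mu)=\set(\lambda)+\set(\mu)$; no normality of the polytope is invoked. For maximality, the paper proves something sharper than ``crossing a facet changes the initial ideal'': for each of the four facet types it exhibits an explicit Pl\"ucker relation whose initial form becomes a monomial once the corresponding inequality is violated, so points of $\mathrm{im}(w)\setminus\mathcal{C}_{2n}$ are not in $\mathrm{Trop}(\complete_{2n})$ at all. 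Your weaker formulation still works, but only after you add the span argument (any $n^2$-dimensional cone of $\mathrm{Trop}(\complete_{2n})$ containing $\mathcal{C}_{2n}$ must lie in $\mathrm{im}(w)$) to rule out a Gr\"obner cone extending transversally to $\mathrm{im}(w)$; without it, the facet-crossing analysis inside $\mathrm{im}(w)$ alone does not show that $\mathcal{C}_{2n}$ is a full Gr\"obner equivalence class in $\mathbb{R}^{\mathcal{P}}$.
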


The proof of Theorem \ref{Thm:main3} (1) will be given in Section \ref{Sec:MaxCone}, using weighted PBW degenerations of irreducible representations of the symplectic Lie algebra constructed in Section \ref{Sec:FFLVCone} and their bridge to valuations coming from birational sequences described in Section \ref{Sec:Geometry}. Part (2) of this theorem is proved in Section \ref{Sec:ProofThm(2)}.

\begin{remark}
    For type ${\tt A}$ flag varieties, such a maximal cone stands at the carrefour of tropical geometry, Lie theory (\cite{FFL11a, Fei12, FeFL17, FFFM19}), geometry of quiver representations (\cite{CFFFR17, CFFFR20}), an exact category and Hall algebras (\cite{FG22}). The connection between the maximal cone described in the current paper and the work of Boos and Cerulli Irelli \cite{BCI21} is not yet clear.
\end{remark}

\section{Weighted PBW degenerations and compatible bases}\label{Sec:FFLVCone}

The PBW degenerations of the symplectic Lie algebras and their representations are studied in \cite{FFL11b}. They proved that the symplectic FFLV basis (see Section \ref{Sec:SymplecticFFLV} below) is compatible with the filtration induced to the irreducible representations from the PBW filtration on the universal enveloping algebra $U(\mathfrak{n}_-)$. The goal of this section is to show that this basis is also compatible with a generalization of the PBW filtration that we consider.

\subsection{The symplectic FFLV basis}\label{Sec:SymplecticFFLV}
We follow \cite{FFL11b} to describe a basis of the irreducible representation $\V_\lambda$ parametrized by points in the FFLV polytope.

We first recall the notion of a symplectic Dyck path.

A \emph{symplectic Dyck path} is a sequence $\mathsf{p}=(p(0),\ldots,p(k))$, $k\geq 0$, of positive roots in $\Phi^+$ satisfying the following conditions:
\begin{enumerate}
    \item[(i)] the first root $p(0)= \alpha_i$ for some $1\leq i \leq n$, \emph{i.e.} it is simple;
    \item[(ii)] the last root $p(k)$ is either a simple root $p(k)=\alpha_l$ for some $1\leq l\leq n$ or $p(k)=\alpha_{j,\overline{j}}$ for some $1\leq j < n$;
    \item[(iii)] the elements in between satisfy the recursion rule: If $p(s)=\alpha_{p,q}\in\Phi^+$, then the next element in the sequence is either $p(s+1)=\alpha_{p,q+1}$ or $p(s+1)=\alpha_{p+1,q}$; where $q+1$ denotes the smallest element in $\{1,\ldots,n,\overline{n-1},\ldots,\overline{1}\}$ which is bigger than $q$. 
\end{enumerate}

Denote by $\mathbb{D}$ the set of all symplectic Dyck paths. For a dominant, integral weight $\lambda = \sum_{i=1}^n m_i \omega_i$, the symplectic FFLV polytope $\mathrm{FFLV}(\lambda)\subset \mathbb{R}^{\Phi^+}$ is the polytope defined by the inequalities $x_\beta\geq 0$ for $\beta\in\Phi^+$ and for all $\mathsf{p}=(p(0),\ldots,p(k))\in \mathbb{D}$:
\begin{align}
\begin{cases}
    & x_{p(0)} + \ldots + x_{p(k)} \leq m_i +\ldots + m_j, \quad \text{if} \quad p(0)=\alpha_i,\quad   p(k)=\alpha_j,\\
    & x_{p(0)} + \ldots + x_{p(k)} \leq m_i +\ldots + m_n,\quad \text{if} \quad p(0)=\alpha_i,\quad   p(k)=\alpha_{j,\overline{j}}.
    \end{cases}
\end{align}
We will denote by $\set(\lambda)$ the set of integral points in $\mathrm{FFLV}(\lambda)$.

Fix an enumeration of positive roots $\Phi^+=\{\beta_1,\ldots,\beta_N\}$. For a multi-exponent $\mathbf{s}=(s_{\beta})_{\beta\in\Phi^+}$, $s_{\beta}\in \mathbb{N}$, let $f^{\mathbf{s}}$ be the element
$$f^{\mathbf{s}} := f_{\beta_1}^{s_{\beta_1}}\cdots f_{\beta_N}^{s_{\beta_N}} \in U(\n_{-}).$$

Note that for $\lambda\in\Lambda^+$, we have fixed a highest weight vector $\hv \in \V_{\lambda}$.

\begin{theorem}[\cite{FFL11b}]\label{thm:FFLV-degenerate}
For any dominant integral weights $\lambda,\mu\in\Lambda^+$, the following hold true: 
\begin{enumerate}
\item  the Minkowski sum property:
\[\mathrm{FFLV}(\lambda + \mu)=\mathrm{FFLV}(\lambda)+\mathrm{FFLV}(\mu)\quad \text{and}\quad \set(\lambda+\mu)=\set(\lambda)+\set(\mu);\]
\item the elements $\{f^{\mathbf{s}}\cdot\hv\mid \mathbf{s}\in \set(\lambda)\}$ form a basis of $\V_{\lambda}$.
\end{enumerate}
\end{theorem}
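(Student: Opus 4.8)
\textbf{Proof proposal for Theorem \ref{thm:FFLV-degenerate}.}

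The plan is to reduce the statement to the analogous statement for the enveloping algebra and then run the standard argument combining linear independence with a dimension count via the Minkowski sum property. First I would establish part (1), the Minkowski sum property, since part (2) will lean on it. The equality $\set(\lambda+\mu)=\set(\lambda)+\set(\mu)$ at the level of lattice points follows from the polytope identity $\mathrm{FFLV}(\lambda+\mu)=\mathrm{FFLV}(\lambda)+\mathrm{FFLV}(\mu)$ together with the observation that $\mathrm{FFLV}(\lambda)$ has integral vertices (so that it is a normal/integrally closed polytope in the relevant sense, making the lattice points of a Minkowski sum the sum of the lattice points). The polytope identity itself is a direct consequence of the defining inequalities: both sides are cut out by $x_\beta\ge 0$ and the symplectic Dyck path inequalities, and the right-hand side of each Dyck path inequality is linear in $\lambda=\sum m_i\omega_i$; one inclusion is clear by adding inequalities, and the reverse inclusion uses that the Dyck path inequalities together with positivity already define a lattice polytope whose vertices one can exhibit explicitly (this is exactly the content proved in \cite{FFL11b}, which I would cite).

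For part (2), I would argue in two halves: spanning and linear independence. For spanning, since $\V_\lambda=U(\n_-)\cdot\hv$, it suffices to show that every monomial $f^{\bt}\cdot\hv$ with $\bt\in\mathbb{N}^{\Phi^+}$ lies in the span of $\{f^{\bs}\cdot\hv\mid \bs\in\set(\lambda)\}$. This is the combinatorial heart and I would prove it by induction on a suitable total order (e.g. homogeneous lexicographic) on multi-exponents, using straightening-type relations: whenever $\bt\notin\set(\lambda)$ some Dyck path inequality is violated, and one uses the representation-theoretic relations in $\V_\lambda$ (coming from the fact that $\nu_\lambda$ is a highest weight vector annihilated by $\n_+$, plus the $\mathfrak{sl}_2$-relations $f_{\alpha_i}^{m_i+1}\cdot\nu_\lambda=0$ and their consequences) to rewrite $f^{\bt}\cdot\hv$ as a combination of strictly smaller monomials. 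For linear independence, I would use the Minkowski sum property to bootstrap from fundamental weights: the monomials associated to $\set(\omega_i)$ are linearly independent in $\V_{\omega_i}$ (checked directly, since the fundamental FFLV polytopes are small), and then independence for general $\lambda$ follows because a tensor product argument together with $\set(\lambda)=\sum_i m_i\cdot\set(\omega_i)$ shows the associated graded of the PBW-type filtration on $\V_\lambda$ has the right size; equivalently, one compares $\#\set(\lambda)$ with $\dim\V_\lambda$. By Ehrhart-type reasoning plus the Minkowski property, $\lambda\mapsto\#\set(\lambda)$ is a polynomial agreeing with $\dim\V_\lambda$ (Weyl dimension formula) — it suffices to check the leading behaviour and enough small cases, or to invoke the explicit count in \cite{FFL11b}. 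Combining: a spanning set of size $\#\set(\lambda)=\dim\V_\lambda$ must be a basis.

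The main obstacle I expect is the spanning step, i.e. proving that $f^{\bt}\cdot\hv$ for $\bt\notin\set(\lambda)$ is reducible to FFLV-admissible monomials. The subtlety is that, unlike the type $\tt A$ minuscule situation, the symplectic positive roots come in two families ($\alpha_{i,j}$ and $\alpha_{i,\overline\jmath}$) and the recursion defining Dyck paths is genuinely two-dimensional, so the straightening relations are more involved and one must carefully choose the induction order so that each rewriting step strictly decreases it. This is precisely where the work of \cite{FFL11b} is essential, and in a self-contained treatment one would need to reproduce their analysis of the relevant relations in $U(\n_-)$ acting on $\V_\lambda$; since the theorem is quoted from \cite{FFL11b}, I would simply cite it for part (2) after recording the reduction above, and spend the new effort only where later sections require a weighted refinement.
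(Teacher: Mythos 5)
This theorem is quoted from \cite{FFL11b}; the paper gives no proof of it, so there is nothing internal to compare your argument against. Your sketch does track the actual structure of the proof in \cite{FFL11b} (spanning by straightening against the relations generated by $U(\n_+)\cdot f_{\alpha}^{\langle\lambda,\alpha^\vee\rangle+1}$, then a dimension count bootstrapped through fundamental weights and Cartan components of tensor products), and you correctly locate the hard combinatorial work in the spanning step, so citing \cite{FFL11b} there is exactly what the authors do.

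One genuine soft spot in your outline: you claim that $\set(\lambda+\mu)=\set(\lambda)+\set(\mu)$ follows from the polytope identity ``together with the observation that $\mathrm{FFLV}(\lambda)$ has integral vertices.'' Integrality of vertices does not imply that the lattice points of a Minkowski sum are the sums of lattice points of the summands; that is a strictly stronger (normality-type) property and fails for general lattice polytopes. The lattice-point decomposition has to be exhibited explicitly, and in fact the present paper does exactly that later: Lemma \ref{Lem:StdDec} produces the standard decomposition $\mathbf{s}=\mathbf{s}_1+(\mathbf{s}-\mathbf{s}_1)$ by peeling off the anti-chain of maximal elements of $\mathrm{supp}(\mathbf{s}|_{\Phi_k^+})$, using the marked chain polytope description of $\mathrm{FFLV}(\lambda)$ from \cite{ABS11} and Stanley's bijection between lattice points of chain polytopes and anti-chains. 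If you want your reduction to be self-contained up to \cite{FFL11b}, you should replace the ``integral vertices'' remark by such an explicit decomposition (or cite it); as written, that sub-step is a gap.
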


The Minkowski sum property $\set(\lambda+\mu)=\set(\lambda)+\set(\mu)$ does not provide a unique decomposition of $\mathbf{s}\in\set(\lambda+\mu)$ into a sum of $\mathbf{s}_1\in\set(\lambda)$ and $\mathbf{s}_2\in\set(\mu)$. For $\lambda=m_1\omega_1+\ldots+m_n\omega_n$, we define the \emph{standard} decomposition of $\mathbf{s}\in\set(\lambda)$ into a sum of elements in 
$$\set(\omega_1)+\ldots+\set(\omega_1)+\ldots+\set(\omega_n)+\ldots+\set(\omega_n)$$ 
where $\set(\omega_k)$ appears $m_k$ times. Such a definition is given in an inductive manner on the height $\mathrm{ht}(\lambda):=m_1+\ldots+m_n$ of $\lambda$. Let $k:=\max\{\ell\mid m_\ell\neq 0\}$. We only need to decide how to decompose $\mathbf{s}$ into a sum $\mathbf{s}_1+\mathbf{s}_2$ with $\mathbf{s}_1\in\set(\omega_k)$ and $\mathbf{s}_2\in\set(\lambda-\omega_k)$.

For $\mathbf{s}=(s_\beta)\in\mathbb{N}^{\Phi^+}$ we denote $\mathrm{supp}\,\mathbf{s}:=\{\beta\in\Phi^+\mid s_\beta\neq 0 \}$ to be the support of $\mathbf{s}$. The set of positive roots admits a poset structure by defining $\beta_1\succ\beta_2$ if there exists a symplectic Dyck path from $\beta_1$ to $\beta_2$.  Let $\Phi_k^+\subseteq\Phi^+$ be the set of positive roots $\alpha_{i,j}$ satisfying $i\leq k\leq j$. We denote by $\mathbf{s}|_{\Phi_k^+}\subseteq\mathbb{N}^{\Phi^+}$ to be the restriction of $\mathbf{s}$ to $\Phi_k^+$ and then extended by zero to the entire $\Phi^+$.  We then define $\mathbf{s}_1\in\mathbb{N}^{\Phi^+}$ to be the characteristic function of the set of maximal elements (with respect to the above partial order) in $\mathrm{supp}\,(\mathbf{s}|_{\Phi_k^+})$.

\begin{lemma}\label{Lem:StdDec}
We have $\mathbf{s}_1\in\set(\omega_k)$ and $\mathbf{s}-\mathbf{s}_1\in\set(\lambda-\omega_k)$.
\end{lemma}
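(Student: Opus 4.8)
The plan is to verify the two membership claims separately, using the Minkowski sum property from Theorem \ref{thm:FFLV-degenerate}(1) together with an analysis of the symplectic Dyck path inequalities. For the claim $\mathbf{s}_1 \in \set(\omega_k)$: since $\mathbf{s}_1$ is the characteristic function of the set of maximal elements in $\mathrm{supp}(\mathbf{s}|_{\Phi_k^+})$, its support lies entirely in $\Phi_k^+$, and by maximality no two distinct elements of $\mathrm{supp}\,\mathbf{s}_1$ are comparable in the Dyck-path poset. Thus for any symplectic Dyck path $\mathsf{p}$, at most one vertex $p(t)$ lies in $\mathrm{supp}\,\mathbf{s}_1$, so $\sum_t x_{p(t)}$ evaluated at $\mathbf{s}_1$ is either $0$ or $1$. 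I would then check that every Dyck path relevant to $\mathrm{FFLV}(\omega_k)$ has right-hand side $\geq 1$: writing $\omega_k$ in the $m_i$-coordinates means $m_k = 1$ and all other $m_i = 0$, so the bound is $m_i + \cdots + m_j$ (or $m_i + \cdots + m_n$), which is $1$ precisely when the path's index range includes $k$ and $0$ otherwise. The content of this step is that a maximal element $\alpha_{p,q} \in \mathrm{supp}(\mathbf{s}|_{\Phi_k^+})$ satisfies $p \leq k \leq q$, so any Dyck path passing through it has $i \leq p \leq k$ and the terminal index $\geq q \geq k$ (in the sense of the two cases), forcing the right-hand side to be $1$; and a Dyck path avoiding $\mathrm{supp}\,\mathbf{s}_1$ contributes $0 \leq m_i + \cdots$. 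Combined with nonnegativity, this gives $\mathbf{s}_1 \in \set(\omega_k)$.

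For the second claim $\mathbf{s} - \mathbf{s}_1 \in \set(\lambda - \omega_k)$, the approach is to fix an arbitrary symplectic Dyck path $\mathsf{p} = (p(0), \ldots, p(\ell))$ with $p(0) = \alpha_i$ and show $\sum_t (s - s_1)_{p(t)} \leq (m_i + \cdots + m_j) - [\text{$k$ in range}]$, where the subtracted indicator is $1$ iff the corresponding inequality for $\omega_k$ has right-hand side $1$. If $k$ is not in the path's range, then the path meets $\Phi_k^+$ trivially — more precisely, I would argue $\mathrm{supp}\,\mathbf{s}_1$ is disjoint from the vertices of $\mathsf{p}$, and also that $\mathbf{s}$ already satisfies the stronger bound with right-hand side $m_i + \cdots + m_j$ (here we need: if the path avoids index $k$ then its $\mathbf{s}$-sum is already bounded by $m_i + \cdots + m_j$, which follows from $\mathbf{s} \in \set(\lambda)$ since that $\leq$ is among the defining inequalities of $\mathrm{FFLV}(\lambda)$ — the $\omega_k$ contribution to the right-hand side being $0$ in this case). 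If $k$ \emph{is} in the path's range, the right-hand side drops by exactly $1$, and I must show $\mathbf{s} - \mathbf{s}_1$ loses at least $1$ relative to the $\mathbf{s}$-bound, i.e. that the path passes through at least one vertex of $\mathrm{supp}\,\mathbf{s}_1$, \emph{or} that $\mathbf{s}$ itself already has slack $\geq 1$ in that inequality. This dichotomy is the crux: if the path's $\mathbf{s}$-sum is tight (equal to $m_i + \cdots + m_j$), I need to locate a maximal element of $\mathrm{supp}(\mathbf{s}|_{\Phi_k^+})$ on the path.

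\textbf{Main obstacle.} The hard part will be exactly this last point: showing that a Dyck path through index $k$ which is tight for $\mathbf{s}$ must contain a maximal element of $\mathrm{supp}(\mathbf{s}|_{\Phi_k^+})$. The strategy I would use is to walk along the portion of the path lying in $\Phi_k^+$ (a Dyck path restricted to the roots $\alpha_{p,q}$ with $p \leq k \leq q$ forms a contiguous sub-path, by the recursion rule (iii) and the structure of $\Phi_k^+$), find the first vertex $p(t_0) \in \mathrm{supp}\,\mathbf{s}$ on it, and then argue via tightness and the poset structure — "following the path upward" from $p(t_0)$ within $\Phi_k^+$ — that either $p(t_0)$ is maximal in $\mathrm{supp}(\mathbf{s}|_{\Phi_k^+})$ or the path continues to a vertex that is. This may require a separate combinatorial lemma comparing the Dyck-path order with the structure of $\Phi_k^+$, of the flavour already present in \cite{FFL11b}; I expect the type-${\tt C}$ subtlety (the "hook" shape of roots $\alpha_{i,\bar j}$ versus $\alpha_{i,j}$, and the special role of $\alpha_{j,\bar j}$ as a terminal root) to demand a case distinction according to whether the relevant roots are of the form $\alpha_{i,j}$ or $\alpha_{i,\bar j}$. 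Once this lemma is in hand, both membership statements follow by checking the finitely many families of defining inequalities, and the Minkowski sum property is not even needed beyond confirming consistency. I would finally note that the $\mathbf{s}_1 \in \mathbb{N}^{\Phi^+}$ integrality is automatic since $\mathbf{s}_1$ is a $0/1$ vector and $\mathbf{s} - \mathbf{s}_1 \geq 0$ by construction (every vertex of $\mathrm{supp}\,\mathbf{s}_1$ lies in $\mathrm{supp}\,\mathbf{s}$).
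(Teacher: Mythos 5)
Your argument for the first inclusion $\mathbf{s}_1\in\set(\omega_k)$ is correct and is in substance the paper's: the paper invokes the description of $\mathrm{FFLV}(\omega_k)$ as the chain polytope of the induced poset on $\Phi_k^+$, whose lattice points are exactly the indicator vectors of antichains, and notes that the maximal elements of any subset form an antichain; your direct check of the Dyck-path inequalities is the same computation written out.

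The second inclusion is where the proposal breaks, and it breaks at precisely the step you single out as the crux and then leave unproved. Worse, that step is false as you (and, read literally, the paper) formulate it. Take $\mathfrak{sp}_{10}$, $\lambda=\omega_1+\omega_3$ (so $k=3$), and $\mathbf{s}=e_{\alpha_{1,3}}+e_{\alpha_{2,4}}$. One checks $\mathbf{s}\in\set(\lambda)$; both roots lie in $\Phi_3^+$, and $\alpha_{1,3}\succ\alpha_{2,4}$ (via $\alpha_{1,3}\to\alpha_{2,3}\to\alpha_{2,4}$), so the unique maximal element of $\mathrm{supp}(\mathbf{s}|_{\Phi_3^+})$ is $\alpha_{1,3}$ and $\mathbf{s}_1=e_{\alpha_{1,3}}$. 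The Dyck path $(\alpha_{2,2},\alpha_{2,3},\alpha_{2,4},\alpha_{3,4},\alpha_{4,4})$ has $\mathbf{s}$-sum $1=m_2+m_3+m_4$: it is tight, its range contains $k=3$, yet it meets no maximal element of $\mathrm{supp}(\mathbf{s}|_{\Phi_3^+})$; and indeed $\mathbf{s}-\mathbf{s}_1=e_{\alpha_{2,4}}\notin\set(\omega_1)$, since $\mathrm{FFLV}(\omega_1)$ is supported on $\Phi_1^+$. So both branches of your dichotomy fail simultaneously, and no combinatorial lemma of the kind you defer to can exist for this choice of $\mathbf{s}_1$.

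Two ingredients are missing. First, the elements to extract are the sinks, not the sources: $\mathbf{s}_1$ must be the indicator of those support elements from which no other support element is reachable along a Dyck path (here $\alpha_{2,4}$, giving $\mathbf{s}-\mathbf{s}_1=e_{\alpha_{1,3}}\in\set(\omega_1)$); this orientation is also what the proof of Proposition \ref{Prop:TabFFLV} implicitly needs. Second, the corrected crux genuinely uses $k=\max\{\ell\mid m_\ell\neq 0\}$, a hypothesis your proposal never invokes: since $m_\ell=0$ for $\ell>k$, every Dyck path whose range contains $k$ has right-hand side $m_i+\cdots+m_k$, and a tight path whose last support element $\beta$ in $\Phi_k^+$ is not a sink can be re-routed after $\beta$ through a deeper support element $\gamma$ to produce a Dyck path with the same right-hand side but strictly larger $\mathbf{s}$-sum, contradicting $\mathbf{s}\in\set(\lambda)$. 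Without both fixes the argument cannot be completed. (For comparison, the paper settles the second inclusion with the single remark that a Dyck path meets an antichain in at most one element; that covers the ``at most one'' direction but not the tight-path case you correctly worry about, so your instinct about where the difficulty sits is sound --- you just do not resolve it.)
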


\begin{proof}
By \cite{ABS11}, the FFLV polytope $\mathrm{FFLV}(\lambda)$ is a marked chain polytope associated to the poset structure on $\Phi^+$ defined above with marking given by $\lambda$. When $\lambda=\omega_k$ is a fundamental weight, $\mathrm{FFLV}(\omega_k)$ is supported on $\Phi_k^+$; when considered inside $\mathbb{N}^{\Phi_k^+}$, it is the chain polytope associated to the induced poset $\Phi_k^+$. The lattice points in the chain polytope have a bijection to anti-chains in the poset \cite{Sta86}. The maximal element of a subset of $\Phi_k^+$ is an anti-chain, hence $\mathbf{s}_1\in\set(\omega_k)$.

Since a Dyck path intersects an anti-chain at at most one element, $\mathbf{s}-\mathbf{s}_1$ satisfies all defining inequalities of $\mathrm{FFLV}(\lambda-\omega_k)$, hence it is contained in $\set(\lambda-\omega_k)$.
\end{proof}

\subsection{Symplectic FFLV degree cone}

Keep the notations in Section \ref{Sec:SympFlg}. We define a polyhedral cone whose points will be used to construct a PBW type like filtrations on symplectic Lie algebras and their representations.

The vector space $\mathbb{R}^{\Phi^+}$ consists of functions from $\Phi^+$ to $\mathbb{R}$. For a point $\mathbf{d}\in\mathbb{R}^{\Phi^+}$, we will simply write $d_{i,j}:=\mathbf{d}(\alpha_{i,j})$ for $\alpha_{i,j}\in\Phi^+$. Notice that in our notation there is no difference between $d_{i,n}$ and $d_{i,\overline{n}}$.

\begin{definition}\label{def:weightsystem}
The symplectic FFLV degree cone for $\mathfrak{sp}_{2n}$ is the polyhedral cone $\mathcal{K}_{2n}\subseteq\mathbb{R}^{\Phi^+}$ defined by the following inequalities:
\begin{itemize}
    \item[(A$_i$)] $d_{i,i} + d_{i+1,i+1} \geq d_{i,i+1}, \qquad\qquad\ \text{for}\  1\leq i\leq n-1$;
    \item[(B$_{i,j}$)] $d_{i,j}   + d_{i+1,j+1} \geq d_{i,j+1}+d_{i+1,j}, \ \ \text{for}\  1\leq i< j \leq  n-1$;
    \item[(C$_{i,j}$)] $d_{i,\ov{j+1}} + d_{i+1,\ov{j}} \geq d_{i,\ov{j}} + d_{i+1,\ov{j+1}}, \ \ \text{for}\  1\leq i< j\leq  n-1$;
    \item[(D$_i$)] $2d_{i,\ov{i+1}} \geq d_{i,\ov{i}} + d_{i+1,\ov{i+1}},\qquad\qquad  \text{for}\  1\leq i\leq  n-1$.
\end{itemize}
That is to say, $\mathbf{d}\in\mathbb{R}^{\Phi^+}$ is contained in $\mathcal{K}_{2n}$ if and only if $\mathbf{d}$ satisfies the above inequalities.
\end{definition}

The polyhedral geometric properties of $\mathcal{K}_{2n}$ are summarized in the following proposition:

\begin{proposition}\label{Prop:Cone}
\begin{enumerate}
\item The cone $\mathcal{K}_{2n}$ is full dimensional in $\mathbb{R}^{\Phi^+}$.
\item The defining inequalities of $\mathcal{K}_{2n}$ give the complete set of its facets.
\item The cone $\mathcal{K}_{2n}$ has a lineality space of dimension $n$.
\item The cone $\mathcal{K}_{2n}$ is simplicial.
\end{enumerate}
\end{proposition}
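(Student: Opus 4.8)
Counting the defining inequalities in Definition~\ref{def:weightsystem}, there are $(n-1)$ of type $(\mathrm{A}_i)$, $\binom{n-1}{2}$ of type $(\mathrm{B}_{i,j})$, $\binom{n-1}{2}$ of type $(\mathrm{C}_{i,j})$, and $(n-1)$ of type $(\mathrm{D}_i)$, for a total of $2(n-1) + 2\binom{n-1}{2} = (n-1)n = n^2 - n$ inequalities. Since $\dim \mathbb{R}^{\Phi^+} = |\Phi^+| = n^2$, if we can show $\mathcal{K}_{2n}$ is full-dimensional with lineality space of dimension exactly $n$, then it has a $(n^2 - n)$-dimensional "pointed part", and having exactly $n^2 - n$ facet candidates forces the cone to be simplicial and forces each defining inequality to actually be a facet. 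So the plan is: prove (1) and (3) by hand, then deduce (2) and (4) by this dimension count, after checking that the $n^2-n$ inequalities are linearly independent as linear functionals (which is what makes the cone simplicial rather than merely having the right number of facets).

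First I would prove (1), full-dimensionality: it suffices to exhibit one point in the interior, i.e.\ a $\mathbf{d}$ satisfying all four families strictly. A convenient choice is a strictly convex-type function such as $d_{i,j} = -(j-i)^2$ on the roots $\alpha_{i,j}$ (with the convention $\alpha_{i,\overline n} = \alpha_{i,n}$) suitably extended to the $\alpha_{i,\overline j}$; one checks each inequality reduces to a strict second-difference inequality of the form $2 > 0$ or similar. Alternatively, and perhaps more cleanly, I would first identify the lineality space and then perturb. For (3), the lineality space $L$ is the set of $\mathbf{d}$ for which all four families hold with equality. Equalities $(\mathrm{A}_i)$, $(\mathrm{B}_{i,j})$ say $d_{i,j}$ is "additive" along the $\alpha_{i,j}$, $1\le i\le j\le n$: the second differences vanish, so $d_{i,j} = a_i + b_j$ for suitable sequences — more precisely $d_{i,j} = c_i + c_{i+1} + \cdots + c_j$ is forced once we know $d_{i,i}$ and the telescoping from $(\mathrm{B})$ and $(\mathrm{A})$. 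Then the equalities $(\mathrm{C}_{i,j})$ and $(\mathrm{D}_i)$ propagate this additivity into the $d_{i,\overline j}$ part, with $(\mathrm{D}_i)$ gluing the two halves at the "turning point". The upshot should be that $L$ is exactly the space of additive functions $\beta \mapsto \sum_{\alpha_i \preceq \beta} c_i$ parametrized by $(c_1,\dots,c_n)\in\mathbb{R}^n$, hence $\dim L = n$. I would verify the map $(c_1,\dots,c_n) \mapsto (d_\beta)$ is injective (read off $c_i = d_{i,i}$) so the dimension is exactly $n$.

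With $L$ in hand, I would quotient by $L$ and work in $\mathbb{R}^{\Phi^+}/L \cong \mathbb{R}^{n^2-n}$; the images of the $n^2-n$ defining functionals are what must be shown linearly independent. This is the crux. I would argue it by a triangularity/ordering argument: order the inequalities so that each one introduces a "new" coordinate not seen before — concretely, solving the lineality equations lets one use as coordinates on the quotient the second-difference expressions themselves (the left-minus-right of each of $(\mathrm{A}_i),(\mathrm{B}_{i,j}),(\mathrm{C}_{i,j}),(\mathrm{D}_i)$), and these are manifestly a basis of $(\mathbb{R}^{\Phi^+}/L)^*$ precisely because $L$ is their common kernel and there are the right number of them. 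Thus the $n^2-n$ facet-inequalities are a linearly independent system defining a pointed full-dimensional cone in $\mathbb{R}^{n^2-n}$ that is therefore simplicial, each inequality is irredundant (a facet), and lifting back gives (2) and (4). The main obstacle is the bookkeeping in the lineality-space computation — showing the equalities $(\mathrm{C}_{i,j})$, $(\mathrm{D}_i)$ genuinely cut the $d_{i,\overline j}$ down to the $n$-parameter additive family and do not over- or under-determine it — together with making the "second differences form a dual basis" claim airtight; once that is done, the simpliciality and facet-completeness are formal consequences of the equality $(\text{number of inequalities}) = (\text{codimension of lineality space})$.
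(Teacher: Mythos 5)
Your proposal is correct and follows essentially the same route as the paper: identify the $n$-dimensional lineality space as the additive family $d_{i,j}=d_{i,i}+\cdots+d_{j,j}$ (and its barred analogue), observe that the number of defining inequalities is $n(n-1)=n^2-n$, and deduce facet-completeness and simpliciality from the equality of this count with the dimension of the pointed quotient. The only differences are cosmetic: the paper builds an interior point greedily coordinate-by-coordinate rather than via an explicit convex function, and it establishes $L=L'$ by combining the containment $L'\subseteq\mathcal{K}_{2n}$ with pointedness of the quotient instead of solving the equality system outright, but the logical skeleton is identical.
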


\begin{proof}
We start with showing the full-dimensionality of $\mathcal{K}_{2n}$ by constructing a solution to the strict inequality system in the definition of $\mathcal{K}_{2n}$. First set $d_{i,i}=0$ for $1\leq i\leq n$. By ($\mathrm{A}_i$), the coordinates $d_{i,i+1}$ for $1\leq i\leq n-1$ are bounded above: choose a feasible solution for these coordinates. Then we look at the coordinates 
$$d_{1,3},\ d_{2,4},\ d_{1,4},\ \ldots,\ d_{n-2,n},\ \ldots,\ d_{1,n}$$
in this order. By ($\mathrm{B}_{i,j}$), a coordinate among the list above is bounded above once a feasible solution was chosen for all previous ones. Next we move to the coordinates
$$d_{n-1,\ov{n-1}},\ \ldots,\ d_{1,\ov{n-1}},\ \ldots,\ d_{2,\ov{2}},\ d_{1,\ov{2}},\ d_{1,\ov{1}}:$$
the same argument as above using ($\mathrm{C}_{i,j}$) and ($\mathrm{D}_i$) gives a point satisfying all strict inequalities. That is to say, we obtain a point in $\mathcal{K}_{2n}$ which has a small neighbourhood contained therein. This shows $\dim\mathcal{K}_{2n}=|\Phi^+|=n^2$.

We prove the other statements simultaneously. Let $L$ denote the lineality space of $\mathcal{K}_{2n}$. First notice that the linear space $L'$ defined by 
$$d_{i,j}-(d_{i,i}+\ldots+d_{j,j})=0,\ \ 1\leq i<j\leq n,$$
$$d_{i,\ov{j}}-(d_{i,i}+\ldots+d_{n,n}+\ldots+d_{j,j})=0,\ \ 1\leq i\leq j\leq n-1$$
is contained in $\mathcal{K}_{2n}$, hence $\dim L\geq n$. We set $d_{i,i}=0$ for $i=1,\ldots,n$. From the proof of the first part, once the values of $d_{i,i}$ are fixed, all other coordinates are bounded above. It follows that $\mathcal{K}_{2n}/L'$ is pointed and hence $L=L'$. This terminates the proof of (3). Now the number of inequalities in the definition of $\mathcal{K}_{2n}$ is $n(n-1)$, which coincides with the dimension of the pointed cone $\mathcal{K}_{2n}/L'$. This proves (4) and (2).
\end{proof}

The following inequalities can be deduced from the defining inequalities of $\mathcal{K}_{2n}$. We summarize them below for later use.

\begin{proposition}\label{Prop:generalinequalities}
For $\mathbf{d}\in\mathcal{K}_{2n}$, the following inequalities hold:
\begin{itemize}
	\item[($\mathrm{A}_{i,j,k}$)] $d_{i,j}   + d_{j+1,k} \geq d_{i,k},\ \ \text{for}\  1\leq i\leq j<k\leq n$;
    \item[($\mathrm{B}_{i,j,k,l}$)] $d_{i,j}   + d_{k,l} \geq d_{i,l}+d_{k,j},\ \ \text{for}\ 1\leq i< k\leq j<l\leq  n$;
    \item[($\mathrm{C}_{i,j,k,l}$)] $d_{i,\ov{l}} + d_{k,\ov{j}} \geq d_{i,\ov{j}} + d_{k,\ov{l}},\ \ \text{for}\ 1\leq i< k\leq j< l\leq  n$;
    \item[($\mathrm{D}_{i,j}$)] $2d_{i,\ov{j}} \geq d_{i,\ov{i}} + d_{j,\ov{j}},\ \ \text{for}\ 1\leq i< j\leq  n$; 
    \item[($\mathrm{E}_{i,j,k}$)] $d_{i,j} + d_{k,\ov{j+1}} \geq d_{i,\ov{k}}, \ \ \text{for}\ 1\leq i\leq k\leq j+1 \leq n$;
    \item[($\mathrm{F}_{i,j,k}$)] $d_{i,j}   + d_{j+1,\ov{k}} \geq d_{i,\ov{k}},\ \ \text{for}\  1\leq i\leq j<k\leq n$;
    \item[($\mathrm{G}_{i,j,k,l}$)] $d_{i,\ov{j}} + d_{k,\ov{l}} \geq d_{i,\ov{k}} + d_{j,\ov{l}},\ \ \text{for}\ 1\leq i< k\leq j< l\leq  n$;
    \item[($\mathrm{H}_{i,j,k,l}$)] $d_{i,\ov{l}} + d_{k,\ov{j}} \geq d_{i,\ov{k}} + d_{j,\ov{l}},\ \ \text{for}\ 1\leq i< k\leq j< l\leq  n$.
\end{itemize}
\end{proposition}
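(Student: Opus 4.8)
The plan is to deduce each of $(\mathrm{A}_{i,j,k})$--$(\mathrm{H}_{i,j,k,l})$ from the four families of facet inequalities $(\mathrm{A}_i)$, $(\mathrm{B}_{i,j})$, $(\mathrm{C}_{i,j})$, $(\mathrm{D}_i)$ of $\mathcal{K}_{2n}$ (Definition~\ref{def:weightsystem}) by exhibiting each as a nonnegative integral combination of them, i.e.\ by a telescoping computation. One organizing observation: every one of $(\mathrm{A}_{i,j,k})$--$(\mathrm{H}_{i,j,k,l})$ is \emph{balanced}, meaning its two sides agree on the lineality space $L'$ of $\mathcal{K}_{2n}$ (described explicitly in the proof of Proposition~\ref{Prop:Cone}); hence, $\mathcal{K}_{2n}$ being $L'$ plus a simplicial cone, the only way such an inequality can hold is if the functional ``left-hand side minus right-hand side'' is a nonnegative combination of the $n(n-1)$ facet functionals, and it is precisely such combinations that I would produce. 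I would establish the eight families in a convenient logical order, grouped by which roots they involve, so that later families may invoke earlier ones.

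I start with the inequalities among the short roots $\alpha_{i,j}$, $j\le n$. The inequality $(\mathrm{B}_{i,j,k,l})$ is obtained directly by summing $(\mathrm{B}_{a,b})$ over the rectangle $i\le a\le k-1$, $j\le b\le l-1$: the double sum telescopes to precisely $d_{i,j}+d_{k,l}-d_{i,l}-d_{k,j}$, and each $(\mathrm{B}_{a,b})$ used is legal because $a\le k-1<j\le b$. Next comes the ``unit step'' $(\mathrm{A}_{i,j,j+1})$, namely $d_{i,j}+d_{j+1,j+1}\ge d_{i,j+1}$, proved by downward induction on $i$ with base case $i=j$ (which is $(\mathrm{A}_j)$) and inductive step combining the case $i+1$ with $(\mathrm{B}_{i,j})$. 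The general $(\mathrm{A}_{i,j,k})$ then follows by telescoping in the second index: with the convention $d_{j+1,j}:=0$ one has $d_{i,j}+d_{j+1,k}-d_{i,k}=\sum_{c=j}^{k-1}\bigl(d_{i,c}+d_{j+1,c+1}-d_{i,c+1}-d_{j+1,c}\bigr)$, whose $c=j$ term is $(\mathrm{A}_{i,j,j+1})$ and whose terms with $c>j$ are instances of $(\mathrm{B}_{i,c,j+1,c+1})$.

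For the inequalities among the long roots I write $p(a,b):=d_{a,\ov b}$ for $a\le b\le n$; these families are $(\mathrm{C}_{i,j,k,l})$, $(\mathrm{D}_{i,j})$, $(\mathrm{G}_{i,j,k,l})$, $(\mathrm{H}_{i,j,k,l})$. As above, $(\mathrm{C}_{i,j,k,l})$ is the telescoping sum of $(\mathrm{C}_{a,b})$ over the same rectangle. The facet inequalities $(\mathrm{C}_{a,b})$ say exactly that $p(a,b)-p(a+1,b)$ is nondecreasing in $b$ (equivalently, that $p(a,b+1)-p(a,b)$ is nonincreasing in $a$); using these monotonicities I would prove $(\mathrm{D}_{i,j})$, i.e.\ $2p(i,j)\ge p(i,i)+p(j,j)$, by induction on $j-i$ with base $j=i+1$ equal to $(\mathrm{D}_i)$ and the step bounding $p(i,j)-p(i+1,j)$ below by $p(i,i+1)-p(i+1,i+1)$ and closing with $(\mathrm{D}_i)$. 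For $(\mathrm{G}_{i,j,k,l})$, a term-by-term telescoping in the second index reduces it to the case $j=k+1$ (the case $j=k$ being a trivial equality), and $(\mathrm{G}_{i,k+1,k,l})$ follows by bounding $p(i,k+1)-p(i,k)\ge p(k,k+1)-p(k,k)$ and $p(k+1,l)-p(k,l)\le p(k+1,k+1)-p(k,k+1)$ via those monotonicities and then applying $(\mathrm{D}_k)$. Finally $(\mathrm{H}_{i,j,k,l})$ is simply the sum of $(\mathrm{C}_{i,j,k,l})$ and $(\mathrm{G}_{i,j,k,l})$.

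The mixed inequalities $(\mathrm{F}_{i,j,k})$ and $(\mathrm{E}_{i,j,k})$, each relating one short and one long root, are where I expect the real work. For a fixed head, the admissible second indices form a chain $i<i+1<\dots<n<\ov{n-1}<\dots<\ov i$ (recall $\alpha_{i,\ov n}=\alpha_{i,n}$) along which each step adds a single simple root, so $(\mathrm{F}_{i,j,k})$, namely $d_{i,j}+d_{j+1,\ov k}\ge d_{i,\ov k}$, is obtained exactly as $(\mathrm{A}_{i,j,k})$ was, by telescoping along the segment of this chain from $j$ to $\ov k$ for the two heads $i$ and $j+1$: the first step is $(\mathrm{A}_{i,j,j+1})$, the remaining first-half steps are instances of $(\mathrm{B}_{i,c,j+1,c+1})$, the turnaround step $n=\ov n$ contributes $0$, and the second-half steps are instances of $(\mathrm{C}_{i,\,\cdot\,,j+1,\,\cdot\,})$. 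For $(\mathrm{E}_{i,j,k})$, namely $d_{i,j}+d_{k,\ov{j+1}}\ge d_{i,\ov k}$, I would argue by induction on $k$ (decreasing): the base case $k=j+1$ is precisely $(\mathrm{F}_{i,j,j+1})$, and the inductive step, after combining with $(\mathrm{F}_{i,k-1,k})$, reduces to a $(\mathrm{G})$-type exchange, while the two degenerate boundary cases $i=k$ and $k=j$ (in which $\alpha_{i,j}$ genuinely degenerates) are disposed of by separate short telescopings along a single-head chain. The main obstacle is $(\mathrm{E})$ itself: its two left-hand roots $\alpha_{i,j}$ and $\alpha_{k,\ov{j+1}}$ do not concatenate head-to-tail but \emph{overlap} in the simple roots $\alpha_k,\dots,\alpha_j$, so there is no single clean telescoping chain and one is forced to route the argument through the already-established families; moreover, throughout all the mixed cases one must check at every step that the facet inequality invoked is legal --- the ranges $1\le i<j\le n-1$ in $(\mathrm{B}_{i,j})$, $(\mathrm{C}_{i,j})$ and $1\le i\le n-1$ in $(\mathrm{A}_i)$, $(\mathrm{D}_i)$ rule out several endpoint configurations near the turnaround index $n$ and along the diagonal --- and handling all of them is what makes the proof lengthy, even though each individual telescoping step is routine.
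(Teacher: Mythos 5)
Your proposal is correct and follows the same overall strategy as the paper's proof: each inequality is realized as a nonnegative combination of the facet inequalities $(\mathrm{A}_i)$--$(\mathrm{D}_i)$ of $\mathcal{K}_{2n}$, with the families $(\mathrm{A})$, $(\mathrm{B})$, $(\mathrm{C})$, $(\mathrm{D})$, $(\mathrm{F})$, $(\mathrm{H})$ obtained by the routine telescoping the paper explicitly leaves to the reader, and $(\mathrm{E})$ singled out as the case needing an induction. Where you genuinely diverge is in how that induction is organized. The paper proves the extreme case $(\mathrm{E}_{i,j,i})$ by descending induction on $i$, at each step listing an explicit collection of facet inequalities to add to the hypothesis $(\mathrm{E}_{i+1,j,i+1})$, and asserts the remaining $(\mathrm{E}_{i,j,k})$ are similar; you instead induct downward on the middle index $k$ from the base case $k=j+1$, which is $(\mathrm{F}_{i,j,j+1})$, with each step a single instance of the already-established $(\mathrm{G}_{i,k,k-1,j+1})$. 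Your version is more modular (it treats all $k$ uniformly and reuses $(\mathrm{F})$ and $(\mathrm{G})$ as black boxes) at the price of two boundary steps, reaching $k=j$ and $k=i$, where the required $(\mathrm{G})$-instance violates the index constraints $i<k-1$ and $k<j+1$; you correctly flag exactly these two cases, and they do go through via a $(\mathrm{C})$-telescoping in a single index combined with $(\mathrm{D}_j)$, respectively $(\mathrm{D}_i)$ --- though your stated reason for their exceptional status (that $\alpha_{i,j}$ degenerates) is not the right diagnosis; the root $\alpha_{i,j}$ is perfectly fine there, it is the $(\mathrm{G})$-exchange that degenerates. Your opening observation that all eight families vanish on the lineality space, so that validity forces expressibility as a nonnegative combination of the $n(n-1)$ facet functionals, is a nice structural justification absent from the paper, and your handling of $(\mathrm{G})$ by telescoping rather than the paper's explicit three-term combination is an equally valid alternative.
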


\begin{proof}
The inequalities ($\mathrm{A}_{i,j,k}$) can be deduced from the defining inequalities ($\mathrm{A}_{i}$) and ($\mathrm{B}_{i,j}$). The inequalities ($\mathrm{B}_{i,j,k,l}$) and ($\mathrm{C}_{i,j,k,l}$) follow from the defining inequalities ($\mathrm{B}_{i,j}$) and ($\mathrm{C}_{i,j}$). The inequalities ($\mathrm{D}_{i,j}$) are consequences of (C$_{i,j}$) and (D$_{i}$). The inequalities ($\mathrm{F}_{i,j,k}$) can be deduced from ($\mathrm{A}_{i}$), ($\mathrm{B}_{i,j}$) and ($\mathrm{C}_{i,j}$). The proofs are straightforward and are left to the reader. The inequality ($\mathrm{G}_{i,j,k,l}$) can be obtained by summing up the inequalities ($\mathrm{C}_{i,k,k,j}$), ($\mathrm{C}_{k,j,j,l}$) and $(-1)\times$($\mathrm{D}_{k,j}$). The inequality ($\mathrm{H}_{i,j,k,l}$) follows from ($\mathrm{C}_{i,j,k,l}$) and ($\mathrm{G}_{i,j,k,l}$).

It remains to show the inequality ($\mathrm{E}_{i,j,k}$). We will prove the harder one ($\mathrm{E}_{i,j,i}$). The proof of all other inequalities are similar. 

We proceed by descending induction on $i$. When $i=n-1$, the inequality reads 
$$(\mathrm{E}_{n-1,n-1,n-1}):\ d_{n-1,n-1}+d_{n-1,n}\geq d_{n-1,\ov{n-1}}.$$
It can be deduced by summing up the following two inequalities
$$(\mathrm{A}_{n-1}):\ d_{n-1,n-1}+d_{n,n}\geq d_{n-1,n},\ \ (\mathrm{D}_{n-1}):\ 2d_{n-1,n}\geq d_{n-1,\ov{n-1}}+ d_{n,n}.$$
Assume that ($\mathrm{E}_{i,j,i}$) holds for $i=k+1,\ldots,n-1$. We verify ($\mathrm{E}_{k,j,k}$) for a fixed $j=k,\ldots,n-1$. 

When $j=k$, the inequality ($\mathrm{E}_{k,k,k}$) is the sum of the inequalities ($\mathrm{D}_{k}$), ($\mathrm{C}_{k,k+1}$), $\ldots$, ($\mathrm{C}_{k,n-1}$), ($\mathrm{B}_{k,k+1}$), $\ldots$, ($\mathrm{B}_{k,n-1}$) and ($\mathrm{A}_{k}$). Now assume that $j\neq k$: the inequality ($\mathrm{E}_{k,j,k}$) is the sum of the inequalities 
($\mathrm{D}_{k}$), $2\times$($\mathrm{C}_{k,k+1}$), $\ldots$, $2\times$($\mathrm{C}_{k,j}$), ($\mathrm{C}_{k,j+1}$), ($\mathrm{C}_{k,n-1}$), ($\mathrm{B}_{k,j}$), $\ldots$, ($\mathrm{B}_{k,n-1}$) and ($\mathrm{E}_{k+1,j,k+1}$). By induction hypothesis, $\mathbf{d}\in\mathcal{K}_{2n}$ satisfies ($\mathrm{E}_{k+1,j,k+1}$), the proof is then complete.
\end{proof}

\begin{remark}\label{Rem:Strict}
Since in the proof we only sum up inequalities from the defining facets of $\mathcal{K}_{2n}$, if $\mathbf{d}$ is chosen from the interior of $\mathcal{K}_{2n}$, all inequalities in Proposition \ref{Prop:generalinequalities} are strict.
\end{remark}

\subsection{Filtrations from FFLV degree cone}\label{Sec:Filtration}
Points in $\mathcal{K}_{2n}$ give rise to filtrations on the Lie algebra $\mathfrak{n}_-$, the universal enveloping algebra $U(\mathfrak{n}_-)$ and the irreducible representation $\V_\lambda$.

We fix $\mathbf{d}\in\mathcal{K}_{2n}$. For $m\in\mathbb{R}$, we define a subspace of $\mathfrak{n}_-$:
$$(\n_{-})^{\mathbf{d}}_{\leq m}=\mathrm{span}\{f_{i,j}\mid\, 1\leq i\leq n,\ i\leq j\leq \overline{i},\  d_{i,j}\leq m\}$$
and the following subspace of $U(\mathfrak{n}_-)$:
$$U(\n_{-})^{\mathbf{d}}_{\leq m}=\mathrm{span}\{f_{i_1,j_1}\ldots f_{i_l,j_l}\mid  d_{i_1,j_1}+\ldots+d_{i_l,j_l}\leq m \}.$$

\begin{proposition}
For $\mathbf{d}\in\mathcal{K}_{2n}$, the subspaces $\{(\n_{-})^{\mathbf{d}}_{\leq m}\mid m\in\mathbb{R}\}$ define an $\mathbb{R}$-filtration of Lie algebra on $\n_-$; the subspaces $\{U(\n_{-})^{\mathbf{d}}_{\leq m}\mid m\in\mathbb{R}\}$ define an $\mathbb{R}$-filtration of algebra on $U(\n_-)$.
\end{proposition}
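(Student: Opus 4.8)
The plan is to verify the two filtration claims directly from the defining inequalities of $\mathcal{K}_{2n}$. For the Lie algebra filtration on $\n_-$, the only thing to check (beyond the obvious fact that the subspaces $(\n_-)^{\mathbf{d}}_{\leq m}$ are increasing and exhaust $\n_-$) is that $[(\n_-)^{\mathbf{d}}_{\leq m},\,(\n_-)^{\mathbf{d}}_{\leq m'}]\subseteq(\n_-)^{\mathbf{d}}_{\leq m+m'}$. It suffices to check this on the spanning root vectors: whenever $[f_\beta,f_\gamma]\neq 0$, we have $[f_\beta,f_\gamma]\in(\n_-)_{\beta+\gamma}$, which is one-dimensional spanned by $f_{\beta+\gamma}$, so it is enough to show $d_\beta+d_\gamma\geq d_{\beta+\gamma}$ whenever $\beta+\gamma\in\Phi^+$ and $[f_\beta,f_\gamma]\neq 0$. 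First I would enumerate, for $\g=\symplectic_{2n}$, all the ways a positive root can be written as $\beta+\gamma$ with $\beta,\gamma\in\Phi^+$: writing roots in the form $\alpha_{i,j}$ and $\alpha_{i,\overline{j}}$, these decompositions are exactly of the shapes $\alpha_{i,j}=\alpha_{i,k}+\alpha_{k+1,j}$; $\alpha_{i,\overline{j}}=\alpha_{i,k}+\alpha_{k+1,\overline{j}}$ (with $k\geq n$ meaning $\alpha_{k+1,\overline j}$ in the barred range); $\alpha_{i,\overline j}=\alpha_{i,j}+\alpha_{j+1,\overline j}$; and the "long root" type $\alpha_{i,\overline i}=\alpha_{i,\overline j}+\alpha_{j,\overline i}$ for $i<j$ — possibly with a factor of $2$ when $\beta=\gamma$, i.e. $\alpha_{i,\overline i}=2\alpha_{i,\overline{i+1}}-\alpha_{i+1,\overline{i+1}}$ type relations where one must be careful about root strings of length $>1$. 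Each required inequality $d_\beta+d_\gamma\geq d_{\beta+\gamma}$ is then precisely one of the inequalities $(\mathrm{A}_{i,j,k})$, $(\mathrm{F}_{i,j,k})$, $(\mathrm{D}_{i,j})$, $(\mathrm{G}_{i,j,k,l})$, $(\mathrm{H}_{i,j,k,l})$ collected in Proposition \ref{Prop:generalinequalities}, so the Lie filtration claim reduces to matching each bracket relation to the appropriate line of that proposition.

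For the algebra filtration on $U(\n_-)$, increasingness and exhaustiveness are immediate from the definition, and $U(\n_-)^{\mathbf{d}}_{\leq m}\cdot U(\n_-)^{\mathbf{d}}_{\leq m'}\subseteq U(\n_-)^{\mathbf{d}}_{\leq m+m'}$ is essentially tautological from the spanning description by products $f_{i_1,j_1}\cdots f_{i_l,j_l}$: concatenating two such products whose degree-sums are $\leq m$ and $\leq m'$ gives a product of degree-sum $\leq m+m'$. The one subtlety is that the spanning set for $U(\n_-)^{\mathbf{d}}_{\leq m}$ uses arbitrary (not just ordered) monomials $f_{i_1,j_1}\cdots f_{i_l,j_l}$; so one must note that this is consistent — rewriting such a monomial into PBW-ordered form via the Lie bracket only introduces terms $f_{\beta+\gamma}$ in place of $f_\beta f_\gamma$, and by the Lie-filtration inequality already established, the degree of the replacement term $f_{\beta+\gamma}$ is $\leq d_\beta+d_\gamma$, so the rewriting does not increase the degree-sum. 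Hence the subspace $U(\n_-)^{\mathbf{d}}_{\leq m}$ is well-defined and compatible with multiplication.

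The main obstacle I expect is purely bookkeeping: making sure the list of root-sum decompositions in $\symplectic_{2n}$ is complete, and handling the "doubled" decompositions $\alpha_{i,\overline i}=\alpha_{i,\overline{i+1}}+\alpha_{i+1,\overline i}$ and their relatives correctly, since in type $\mathtt{C}$ a positive root can be a sum of two positive roots in more than one way and the $2$-root strings require the inequalities $(\mathrm{D}_{i,j})$ rather than just the "chain" inequalities. None of this is deep; it is the type-$\mathtt{C}$ combinatorial analogue of the type-$\mathtt{A}$ verification, and every inequality needed has already been made available in Proposition \ref{Prop:generalinequalities}, so the proof amounts to a finite case check that I would organize by the shape of $\beta+\gamma$.
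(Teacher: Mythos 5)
Your overall strategy is exactly the paper's: reduce the Lie-filtration condition to the inequality $d_\beta+d_\gamma\geq d_{\beta+\gamma}$ for each pair $\beta,\gamma\in\Phi^+$ with $[f_\beta,f_\gamma]\neq 0$, enumerate the nonzero brackets, and match each to an inequality already collected in Proposition \ref{Prop:generalinequalities}. The paper records the nonzero brackets as $[f_{i,j},f_{j+1,k}]\propto f_{i,k}$ and $[f_{i,j},f_{k,\overline{j+1}}]\propto f_{i,\overline{k}}$ and invokes precisely $(\mathrm{A}_{i,j,k})$, $(\mathrm{E}_{i,j,k})$ and $(\mathrm{F}_{i,j,k})$. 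Your treatment of the $U(\n_-)$ part --- noting that rewriting an unordered monomial into PBW form only replaces $f_\beta f_\gamma$ by terms of degree at most $d_\beta+d_\gamma$ --- is a correct and slightly more careful version of the paper's one-line remark.

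That said, the bookkeeping in your sketch contains errors you would have to repair when executing the case check. In $\epsilon$-coordinates ($\alpha_{i,j}=\epsilon_i-\epsilon_{j+1}$ for $j<n$, $\alpha_{i,\overline{j}}=\epsilon_i+\epsilon_j$, $\alpha_{i,\overline{i}}=2\epsilon_i$), the long root decomposes as $\alpha_{i,\overline{i}}=\alpha_{i,j-1}+\alpha_{i,\overline{j}}$, i.e.\ $2\epsilon_i=(\epsilon_i-\epsilon_j)+(\epsilon_i+\epsilon_j)$; your proposed $\alpha_{i,\overline{j}}+\alpha_{j,\overline{i}}$ would be $2\epsilon_i+2\epsilon_j$, which is not a root (and $\alpha_{j,\overline{i}}$ with $j>i$ is not even in the paper's indexing). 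Correspondingly, the inequalities actually needed are $(\mathrm{A})$, $(\mathrm{E})$ and $(\mathrm{F})$. You instead cite $(\mathrm{D})$, $(\mathrm{G})$, $(\mathrm{H})$, which are exchange-type inequalities of the form $d_\beta+d_\gamma\geq d_{\beta'}+d_{\gamma'}$ with $\beta+\gamma$ not a root --- they play no role here (they are used later, e.g.\ in Lemma \ref{Lem:Fundk}) --- and you omit $(\mathrm{E})$, which is exactly the one governing $[f_{i,j},f_{k,\overline{j+1}}]\propto f_{i,\overline{k}}$, including the long-root case $k=i$. Finally, the worry about a ``factor of $2$ when $\beta=\gamma$'' is vacuous: the root system is reduced, so $2\beta$ is never a root and $[f_\beta,f_\beta]=0$ trivially. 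None of this is a conceptual gap --- every inequality genuinely required is available in Proposition \ref{Prop:generalinequalities} --- but as written your matching of bracket relations to inequalities is incorrect and must be redone.
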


\begin{proof}
We need to verify that $[(\n_-)^{\mathbf{d}}_{\leq k},(\n_-)^{\mathbf{d}}_{\leq l}]\subseteq (\n_-)^{\mathbf{d}}_{\leq k+l}$. For $\alpha,\beta\in\Phi^+$, the Lie bracket $[f_\alpha,f_\beta]$ is non-zero if and only if $\alpha+\beta\in\Phi^+$. Written using the fixed basis for the weight spaces, the non-zero Lie brackets are precisely:
\begin{enumerate}
\item for $1\leq i\leq j\leq n-1$ and $j+1\leq k\leq \ov{i}$, $[f_{i,j},f_{j+1,k}]$ is a non-zero scalar multiple of $f_{i,k}$;
\item for $1\leq i\leq k\leq j+1\leq n$ with $i\neq j+1$, $[f_{i,j},f_{k,\ov{j+1}}]$ is a non-zero scalar multiple of $f_{i,\ov{k}}$.
\end{enumerate}
Since $\mathbf{d}\in\mathcal{K}_{2n}$, the inequalities ($\mathrm{A}_{i,j,k}$), ($\mathrm{E}_{i,j,k}$) and ($\mathrm{F}_{i,j,k}$) in Proposition \ref{Prop:generalinequalities} imply the desired inclusion. 

The statement on $U(\n_-)$ follows from the above argument.
\end{proof}

Similarly one defines the subspaces $(\n_{-})^{\mathbf{d}}_{< m}\subseteq (\n_{-})^{\mathbf{d}}_{\leq m}$ and $U(\n_{-})^{\mathbf{d}}_{<m}\subseteq U(\n_{-})^{\mathbf{d}}_{\leq m}$ by replacing the inequalities in the definitions of $(\n_{-})^{\mathbf{d}}_{\leq m}$ and $U(\n_{-})^{\mathbf{d}}_{\leq m}$ with the strict ones. We define the associated graded Lie algebra and the associated graded algebra as follows:
$$\n_-^{\mathbf{d}}:=\bigoplus_{m\in\mathbb{R}}(\n_-^{\mathbf{d}})_{m},\ \ \text{where}\ \ (\n_-^{\mathbf{d}})_{m}:=(\n_-)^{\mathbf{d}}_{\leq m}/(\n_-)^{\mathbf{d}}_{<m};$$
$$U(\n_-)^{\mathbf{d}}:=\bigoplus_{m\in\mathbb{R}}U(\n_-)^{\mathbf{d}}_{m},\ \ \text{where}\ \ U(\n_-)^{\mathbf{d}}_{m}:=U(\n_-)^{\mathbf{d}}_{\leq m}/U(\n_-)^{\mathbf{d}}_{<m}.$$
For a root vector $f_{i,j}$ with $1\leq i\leq n$ and $i\leq j\leq \ov{i}$, we denote $f_{i,j}^{\mathbf{d}}$ its class in $\mathfrak{n}_-^{\mathbf{d}}$.

The following proposition can be looked at as a kind of functoriality of the degeneration from $U(\n_-)$ to $U(\n_-^{\mathbf{d}})$.

\begin{proposition}\label{Prop:Isod}
The linear map $\mathfrak{n}_-^{\mathbf{d}}\to U(\mathfrak{n}_-)^{\mathbf{d}}$, sending $f_{i,j}^{\mathbf{d}}$ to the class of $f_{i,j}$ in $U(\mathfrak{n}_-)^{\mathbf{d}}$, induces an isomorphism of algebras, $U(\n_-^{\mathbf{d}})\cong U(\mathfrak{n}_-)^{\mathbf{d}}$.
\end{proposition}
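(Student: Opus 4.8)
The plan is to construct the isomorphism in two stages: first exhibit a surjective algebra homomorphism $U(\mathfrak{n}_-^{\mathbf{d}}) \twoheadrightarrow U(\mathfrak{n}_-)^{\mathbf{d}}$ using the universal property of the enveloping algebra, and then show it is injective by a dimension/PBW-basis count in each graded component. More precisely, the linear map $\mathfrak{n}_-^{\mathbf{d}} \to U(\mathfrak{n}_-)^{\mathbf{d}}$ sending $f_{i,j}^{\mathbf{d}}$ to the class of $f_{i,j}$ in $U(\mathfrak{n}_-)^{\mathbf{d}}_{d_{i,j}}$ is a homomorphism of Lie algebras: indeed $U(\mathfrak{n}_-)^{\mathbf{d}}$ is an associative graded algebra, hence a Lie algebra under its commutator, and the Lie bracket on $\mathfrak{n}_-^{\mathbf{d}}$ is by definition the one induced from the filtration on $\mathfrak{n}_-$ inside $U(\mathfrak{n}_-)$ (the brackets listed in the previous proposition, with the degree-strictly-dropping brackets becoming zero in the associated graded). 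So by the universal property of $U(-)$ we obtain an algebra homomorphism $\Psi : U(\mathfrak{n}_-^{\mathbf{d}}) \to U(\mathfrak{n}_-)^{\mathbf{d}}$.

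First I would check surjectivity: $U(\mathfrak{n}_-)^{\mathbf{d}}$ is spanned, as $m$ ranges over $\mathbb{R}$, by classes of monomials $f_{i_1,j_1}\cdots f_{i_l,j_l}$, and each such class is $\Psi$ of the corresponding product $f_{i_1,j_1}^{\mathbf{d}}\cdots f_{i_l,j_l}^{\mathbf{d}}$ in $U(\mathfrak{n}_-^{\mathbf{d}})$; hence $\Psi$ is onto. For injectivity, I would work grading by grading. Fixing the $\Phi^+$-multidegree (the total exponent of each root vector, which is preserved because all the structural brackets are $\mathfrak{h}$-weight-homogeneous), the source $U(\mathfrak{n}_-^{\mathbf{d}})$ has, by the PBW theorem applied to the Lie algebra $\mathfrak{n}_-^{\mathbf{d}}$, a basis of ordered monomials $(f^{\mathbf{s}})^{\mathbf{d}}$ indexed by multi-exponents $\mathbf{s}\in\mathbb{N}^{\Phi^+}$; the target has dimension in each multidegree at most $\dim$ of the corresponding weight space of $U(\mathfrak{n}_-)$, which is again the number of such monomials by PBW for $\mathfrak{n}_-$. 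Since $\Psi$ is a surjection between spaces of the same (finite) dimension in each multidegree, it is an isomorphism.

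The main obstacle is making the injectivity argument airtight: one must be sure that the associated graded $U(\mathfrak{n}_-)^{\mathbf{d}}$ really does not collapse in dimension, i.e. that $\dim U(\mathfrak{n}_-)^{\mathbf{d}}$ equals $\dim U(\mathfrak{n}_-)$ in each $\mathfrak{h}$-weight space. This is the standard fact that passing to the associated graded of a (bounded-below, exhaustive) filtration of a vector space preserves dimension in each homogeneous piece, provided the filtration is \emph{exhaustive and separated} on each finite-dimensional weight space — which holds here because $\mathbf{d}$ takes finitely many values and each weight space of $U(\mathfrak{n}_-)$ is finite-dimensional. Once that is granted, the PBW dimension count on both sides matches and surjectivity upgrades to bijectivity. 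A secondary, purely bookkeeping point is to confirm that $\Psi$ respects the $\mathbb{R}$-gradings (it sends the degree-$d_{i,j}$ generator to a degree-$d_{i,j}$ element, and products add up correctly), so that the count can indeed be performed componentwise; this is immediate from the construction and needs only to be stated.
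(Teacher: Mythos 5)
Your proposal follows the paper's proof exactly: establish that the map is a Lie algebra morphism, invoke the universal property of $U(-)$, and conclude bijectivity from the PBW theorem (the paper compresses your surjectivity-plus-dimension-count into the single phrase ``by PBW theorem for $U(\mathfrak{n}_-)$''). One small correction: the $\mathbb{N}^{\Phi^+}$-multidegree is \emph{not} preserved by the relations (a bracket $[f_\alpha,f_\beta]=cf_{\alpha+\beta}$ changes it), so the componentwise count must be run over the finite-dimensional $\mathfrak{h}$-weight spaces, as the rest of your argument in fact does.
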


\begin{proof}
From the inequalities (A), (E) and (F) in Proposition \ref{Prop:generalinequalities}, there exists a morphism of Lie algebras, $\n_-^{\mathbf{d}}\to U(\mathfrak{n}_-)^{\mathbf{d}}$. From the universal property, there exists a morphism of algebras, $U(\n_-^{\mathbf{d}})\to U(\mathfrak{n}_-)^{\mathbf{d}}$. It is an isomorphism by PBW theorem for $U(\mathfrak{n}_-)$.
\end{proof}

From now on we will not distinguish $U(\n_-^{\mathbf{d}})$ and $U(\mathfrak{n}_-)^{\mathbf{d}}$.

For a polyhedral cone $C$, let $\mathrm{relint}(C)$ denote its relative interior.

\begin{lemma}\label{Lem:Face}
Let $\mathbf{d},\mathbf{e}\in\mathcal{K}_{2n}$ be contained in the relative interior of the same face. 
\begin{enumerate}
\item The linear map 
$$\n_-^{\mathbf{d}}\to \n_-^{\mathbf{e}},\ \ (f_\beta)_{\mathbf{d}}\mapsto (f_\beta)_{\mathbf{e}}\ \ \text{for }\beta\in\Phi^+$$
is an isomorphism of Lie algebras.
\item The associated graded algebras $U(\n_-)^{\mathbf{d}}$ and $U(\n_-)^{\mathbf{e}}$ are isomorphic.
\end{enumerate}
\end{lemma}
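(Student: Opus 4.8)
The plan is to prove both statements simultaneously, since (2) follows from (1) by the universal property of enveloping algebras together with Proposition \ref{Prop:Isod}. So I would focus on part (1): if $\mathbf{d},\mathbf{e}\in\mathcal{K}_{2n}$ lie in the relative interior of the same face $F$, then the obvious degree-preserving bijection on root vectors extends to a Lie algebra isomorphism $\n_-^{\mathbf{d}}\cong\n_-^{\mathbf{e}}$.

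First I would record the only thing that matters about the associated graded Lie algebra structure: by the proof of the filtration proposition, the bracket $[f_\alpha,f_\beta]$ is (up to a nonzero scalar) $f_{\alpha+\beta}$ when $\alpha+\beta\in\Phi^+$ and $0$ otherwise, and in the associated graded Lie algebra $\n_-^{\mathbf{d}}$ the class $[(f_\alpha)_{\mathbf{d}},(f_\beta)_{\mathbf{d}}]$ equals the class of $[f_\alpha,f_\beta]$ if $\mathbf{d}(\alpha)+\mathbf{d}(\beta)=\mathbf{d}(\alpha+\beta)$, and equals $0$ if $\mathbf{d}(\alpha)+\mathbf{d}(\beta)>\mathbf{d}(\alpha+\beta)$ (the strict inequality case). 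Here $\mathbf{d}(\alpha)+\mathbf{d}(\beta)\geq\mathbf{d}(\alpha+\beta)$ always holds for $\mathbf{d}\in\mathcal{K}_{2n}$ by the relevant inequalities $(\mathrm{A}_{i,j,k})$, $(\mathrm{E}_{i,j,k})$, $(\mathrm{F}_{i,j,k})$ of Proposition \ref{Prop:generalinequalities}. Therefore the structure constants of $\n_-^{\mathbf{d}}$ in the basis $\{(f_\beta)_{\mathbf{d}}\}$ are completely determined by the \emph{combinatorial data} of which of these inequalities are equalities for $\mathbf{d}$, i.e.\ by the set of inequalities in Proposition \ref{Prop:generalinequalities} that $\mathbf{d}$ saturates.

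Second, the key point: each inequality appearing in Proposition \ref{Prop:generalinequalities} is, by its proof, a nonnegative sum of the defining inequalities (facets) of $\mathcal{K}_{2n}$. Hence for $\mathbf{d}\in\mathcal{K}_{2n}$, such a derived inequality is an equality if and only if every defining inequality used with a positive coefficient in that derivation is an equality for $\mathbf{d}$. But the defining inequalities that are equalities for $\mathbf{d}$ are exactly those that cut out the minimal face containing $\mathbf{d}$; and any two points in the relative interior of the same face $F$ saturate precisely the same set of facets. Consequently $\mathbf{d}$ and $\mathbf{e}$ saturate the same collection of inequalities among $(\mathrm{A}_{i,j,k})$, $(\mathrm{E}_{i,j,k})$, $(\mathrm{F}_{i,j,k})$, and so $\n_-^{\mathbf{d}}$ and $\n_-^{\mathbf{e}}$ have identical structure constants in their respective bases $\{(f_\beta)_{\mathbf{d}}\}$, $\{(f_\beta)_{\mathbf{e}}\}$. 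The assignment $(f_\beta)_{\mathbf{d}}\mapsto(f_\beta)_{\mathbf{e}}$ is then by construction a Lie algebra isomorphism, proving (1). For (2), this isomorphism induces an isomorphism $U(\n_-^{\mathbf{d}})\cong U(\n_-^{\mathbf{e}})$, and by Proposition \ref{Prop:Isod} these are canonically $U(\n_-)^{\mathbf{d}}$ and $U(\n_-)^{\mathbf{e}}$.

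The main obstacle is the bookkeeping in the second step: one must make sure the claim ``a derived inequality in Proposition \ref{Prop:generalinequalities} is tight at $\mathbf{d}$ iff a certain subset of facets is tight at $\mathbf{d}$'' is actually used correctly — in particular, that the \emph{only} structure-constant data we need is which of the relevant bracket inequalities are equalities, and that we do not secretly need finer information (e.g.\ the precise values $\mathbf{d}(\beta)$, which of course differ between $\mathbf{d}$ and $\mathbf{e}$). Since the graded bracket in a filtered-to-graded construction only sees whether the degree is additive on a given pair (equality) or drops (strict inequality), and nothing else, this is fine — but it is worth stating explicitly. A minor additional check is that the three families $(\mathrm{A}_{i,j,k})$, $(\mathrm{E}_{i,j,k})$, $(\mathrm{F}_{i,j,k})$ exhaust all pairs $(\alpha,\beta)$ with $\alpha+\beta\in\Phi^+$ realized by the fixed basis brackets listed in the proof of the filtration proposition, which is immediate from that list.
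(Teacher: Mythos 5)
Your proposal is correct and follows essentially the same route as the paper: reduce (2) to (1) via Proposition \ref{Prop:Isod}, observe that the graded bracket structure is governed solely by which of the inequalities $(\mathrm{A}_{i,j,k})$, $(\mathrm{E}_{i,j,k})$, $(\mathrm{F}_{i,j,k})$ are tight, and note that since these are nonnegative combinations of the facet inequalities of $\mathcal{K}_{2n}$, their tightness pattern is constant on the relative interior of each face. Your version merely makes explicit the step (tightness of a derived inequality $\Leftrightarrow$ tightness of all facets used in its derivation) that the paper leaves implicit.
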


\begin{proof}
The part (2) is a direct consequence of (1) and Proposition \ref{Prop:Isod}. We prove (1). Recall that whether the Lie bracket $[f_\alpha,f_\beta]$ for $\alpha,\beta\in\Phi^+$ in $\mathfrak{n}_-$ is zero depends on whether $\alpha+\beta$ is a positive root. Given $\mathbf{d}\in\mathcal{K}_{2n}$, the Lie brackets in $\mathfrak{n}_-^{\mathbf{d}}$ are determined by whether inequalities $(\mathrm{A}_{i,j,k})$, $(\mathrm{E}_{i,j,k})$ and $(\mathrm{F}_{i,j,k})$ in Proposition \ref{Prop:generalinequalities} are strict or not. Notice that in the proof of the proposition, the above inequalities are obtained from summing up certain defining inequalities of $\mathcal{K}_{2n}$ in Definition \ref{def:weightsystem}. From Proposition \ref{Prop:Cone}, they are facets of $\mathcal{K}_{2n}$. Therefore the Lie brackets in $\mathfrak{n}_-^{\mathbf{d}}$ will remain the same when $\mathbf{d}$ varies in the relative interior of a face.
\end{proof}

As a consequence, the isomorphism type of the Lie algebra $\n_-^{\mathbf{d}}$ is constant on the relative interior of each face of $\mathcal{K}_{2n}$.

We have a closer look at $U(\n_-)^{\mathbf{d}}$ and $\V_\lambda^{\mathbf{d}}$ for $\mathbf{d}\in\mathrm{relint}(\mathcal{K}_{2n})$.

\begin{lemma}\label{Lem:DegAlg}
If $\mathbf{d}\in\mathrm{relint}(\mathcal{K}_{2n})$, $U(\n_-)^{\mathbf{d}}$ is isomorphic to the symmetric algebra $S(\n_-)$ as an algebra.
\end{lemma}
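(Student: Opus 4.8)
The plan is to show that for $\mathbf{d}\in\operatorname{relint}(\mathcal{K}_{2n})$, the only relations surviving in the associated graded algebra $U(\n_-)^{\mathbf{d}}$ are the commuting relations, so that it is freely generated by commuting variables $f_{i,j}^{\mathbf{d}}$, i.e.\ is $S(\n_-)$. By Proposition \ref{Prop:Isod} it suffices to prove that the graded Lie algebra $\n_-^{\mathbf{d}}$ is abelian, since then $U(\n_-^{\mathbf{d}})=S(\n_-^{\mathbf{d}})\cong S(\n_-)$ as algebras (the underlying vector space of $\n_-^{\mathbf{d}}$ being canonically identified with that of $\n_-$ via the root vector basis).

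First I would recall from the proof of the filtration proposition that the only potentially nonzero Lie brackets among the $f_\beta$ in $\n_-$ are of the two types listed there: $[f_{i,j},f_{j+1,k}] \doteq f_{i,k}$ and $[f_{i,j},f_{k,\overline{j+1}}] \doteq f_{i,\overline{k}}$ (up to nonzero scalars). In $\n_-^{\mathbf{d}}$ such a bracket is nonzero precisely when the corresponding degree inequality among $(\mathrm{A}_{i,j,k})$, $(\mathrm{E}_{i,j,k})$, $(\mathrm{F}_{i,j,k})$ of Proposition \ref{Prop:generalinequalities} is an equality. By Remark \ref{Rem:Strict}, when $\mathbf{d}$ lies in the interior of $\mathcal{K}_{2n}$ every one of those inequalities is strict. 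Hence every such bracket has image $0$ in the associated graded, i.e.\ $\n_-^{\mathbf{d}}$ is abelian.

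Consequently $U(\n_-^{\mathbf{d}})$ is the universal enveloping algebra of an abelian Lie algebra of dimension $|\Phi^+|=n^2$, which is the polynomial algebra $S(\n_-^{\mathbf{d}})$; under the identification of bases $f_{i,j}^{\mathbf{d}}\leftrightarrow f_{i,j}$ this is $S(\n_-)$. Combined with Proposition \ref{Prop:Isod}, $U(\n_-)^{\mathbf{d}}\cong U(\n_-^{\mathbf{d}})\cong S(\n_-)$ as algebras, which is the claim.

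The only subtle point — really the only thing to be careful about — is making sure that Remark \ref{Rem:Strict} genuinely covers all the inequalities that control the brackets: one must check that each of $(\mathrm{A}_{i,j,k})$, $(\mathrm{E}_{i,j,k})$, $(\mathrm{F}_{i,j,k})$ was obtained in the proof of Proposition \ref{Prop:generalinequalities} purely by adding up facet inequalities of $\mathcal{K}_{2n}$ (with nonnegative coefficients), so that strictness of the facet inequalities on $\operatorname{relint}(\mathcal{K}_{2n})$ forces strictness of the derived ones. This is exactly what Remark \ref{Rem:Strict} asserts, so no genuine obstacle remains; the rest is bookkeeping.
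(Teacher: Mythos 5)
Your proposal is correct and follows essentially the same route as the paper: the paper's proof also reduces to showing $\n_-^{\mathbf{d}}$ is abelian by observing (via the argument in Lemma \ref{Lem:Face}) that the surviving brackets are governed by whether the inequalities $(\mathrm{A}_{i,j,k})$, $(\mathrm{E}_{i,j,k})$, $(\mathrm{F}_{i,j,k})$ are equalities, which Remark \ref{Rem:Strict} rules out on the interior. Your write-up just makes explicit the appeal to Proposition \ref{Prop:Isod} and the PBW identification $U(\n_-^{\mathbf{d}})\cong S(\n_-^{\mathbf{d}})$ that the paper leaves implicit.
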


\begin{proof}
The same argument as in the proof of Lemma \ref{Lem:Face} shows that if all inequalities in Definition \ref{def:weightsystem} are strict, then for any $\alpha,\beta\in\Phi^+$, $[f_\alpha,f_\beta]=0$ in $\mathfrak{n}_-^{\mathbf{d}}$, hence $\mathfrak{n}_-^{\mathbf{d}}$ is an abelian Lie algebra.
\end{proof}

Such a filtration on $U(\n_-)$ induces a filtration on cyclic modules. For any dominant integral weight $\lambda$, the simple $\symplectic_{2n}$-module $\V_{\lambda}=U(\n_-)\cdot \nu_{\lambda}$ is cyclic. For a fixed $\mathbf{d}\in\mathcal{K}_{2n}$, we consider the induced $\mathbb{R}$-filtration 
\[(\V_{\lambda})_{\leq m}^{\mathbf{d}}=U(\n_-)^{\mathbf{d}}_{\leq m}\cdot\nu_{\lambda}\]
and similarly its subspace $(\V_{\lambda})_{< m}^{\mathbf{d}}$.
Let us denote the associated graded space by $\V_{\lambda}^{\mathbf{d}}$, i.e.
\[\V_{\lambda}^{\mathbf{d}} = \bigoplus_{m\geq 0} (\V_{\lambda}^{\mathbf{d}})_{m},\ \ \text{where}\ \ (\V_{\lambda}^{\mathbf{d}})_{m} = (\V_{\lambda})^{\mathbf{d}}_{\leq m}/(\V_{\lambda})^{\mathbf{d}}_{<m}.\] 

The vector space $\V_{\lambda}^{\mathbf{d}}$ carries naturally a graded $U(\n_{-}^{\mathbf{d}})$-module structure. Indeed, for any $k,l\in \mathbb{R}$, we have by definition
\[U(\n_-)_{\leq k}(\V_{\lambda})_{\leq l}\subseteq (\V_{\lambda})_{\leq k+l}.\]
The $U(\mathfrak{n}_-^{\mathbf{d}})$-module $\V_\lambda^{\mathbf{d}}$ will be termed a \textit{weighted PBW degeneration} of $\V_{\lambda}$. 

Let $\nu_{\lambda}^{\mathbf{d}}$ denote the image of $\nu_{\lambda}$ in $\V_{\lambda}^{\mathbf{d}}$. The $U(\n_{-}^{\mathbf{d}})$-module $\V_{\lambda}^{\mathbf{d}}$ is cyclic, having $\nu_{\lambda}^{\mathbf{d}}$ as a cyclic vector.

We fix $\mathbf{d}\in\mathrm{relint}(\mathcal{K}_{2n})$. For $\lambda\in\Lambda^+$, since $\V_\lambda^{\mathbf{d}}$ is a cyclic $U(\n_-)^{\mathbf{d}}$-module, we obtain a surjective $U(\n_-)^{\mathbf{d}}$-module map
$$\varphi_\lambda^{\mathbf{d}}:S(\n_-)\to \V_\lambda^{\mathbf{d}},\ \ x\mapsto x\cdot \nu_\lambda^{\mathbf{d}}.$$
We set $\mathrm{I}_\lambda^{\mathbf{d}}:=\ker\varphi_\lambda^{\mathbf{d}}$. It is an ideal in $S(\n_-)$.

Recall that in Section \ref{Sec:SymplecticFFLV} we have fixed an enumeration of positive roots $\Phi^+=\{\beta_1,\ldots,\beta_N\}$. For a multi-exponent $\mathbf{s}=(s_{\beta})_{\beta\in\Phi^+}$, we will denote by $f^{\mathbf{s}}_{\mathbf{d}}$ the class of $f^{\mathbf{s}}$ in $U(\n_{-})^{\mathbf{d}}$.

The first main result of the paper is the following compatibility of the FFLV basis and the weighted PBW degenerations.

\begin{theorem}\label{Thm:main1}
For every point $\mathbf{d}\in\mathcal{K}_{2n}$, the set $\{f^{\mathbf{s}}_{\mathbf{d}}\cdot\nu^{\mathbf{d}}_{\lambda}\ |\  \mathbf{s}\in \set(\lambda)\}$ forms a basis of $\V_{\lambda}^{\mathbf{d}}$. 
\end{theorem}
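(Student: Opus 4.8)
The plan is to transport the non-degenerate statement (Theorem \ref{thm:FFLV-degenerate}(2)) through the associated graded construction, using a dimension count to show no spanning relations are lost. First I would fix $\mathbf{d}\in\mathcal{K}_{2n}$ and observe that the spanning direction is essentially free: since $\{f^{\mathbf{s}}\cdot\nu_\lambda\mid \mathbf{s}\in\set(\lambda)\}$ spans $\V_\lambda$ by Theorem \ref{thm:FFLV-degenerate}(2), and the associated graded map is surjective on each filtration piece, the classes $\{f^{\mathbf{s}}_{\mathbf{d}}\cdot\nu^{\mathbf{d}}_\lambda\mid \mathbf{s}\in\set(\lambda)\}$ span $\V_\lambda^{\mathbf{d}}$ — one just needs that every element of $\V_\lambda^{\mathbf{d}}$ is a homogeneous class of some $v\in\V_\lambda$, expand $v$ in the FFLV basis, and note that the monomials contributing to the leading term (with respect to the $\mathbf{d}$-grading) give the required expression, while lower-order pieces are handled by descending induction on the filtration degree. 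Since $\dim\V_\lambda^{\mathbf{d}}=\dim\V_\lambda=|\set(\lambda)|$, a spanning set of the right cardinality is automatically a basis.

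\textbf{Reduction to fundamental weights and the Minkowski property.} The subtlety is that the spanning argument above already suffices once we know $\dim\V_\lambda^{\mathbf{d}}=\dim\V_\lambda$, which is immediate because $\V_\lambda^{\mathbf{d}}$ is the associated graded of a finite filtration on the finite-dimensional space $\V_\lambda$. So in fact the whole theorem reduces to: the image classes span, and they are $|\set(\lambda)|$ in number. The first is the induction sketched above; the second is Theorem \ref{thm:FFLV-degenerate}(2) combined with $\dim\operatorname{gr}=\dim$. Thus I do not expect to need the fundamental-weight case separately for \emph{this} theorem — that refinement (Section \ref{Subsec:Fund}) is presumably needed later, e.g.\ for the geometric statements, not here. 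Nonetheless, to make the leading-term bookkeeping clean I would phrase it via the standard decomposition (Lemma \ref{Lem:StdDec}): writing $\mathbf{s}=\mathbf{s}_1+\cdots+\mathbf{s}_{\operatorname{ht}(\lambda)}$ with each summand in some $\set(\omega_k)$ gives a preferred way to track $\mathbf{d}$-degrees multiplicatively, and the Minkowski additivity $\set(\lambda+\mu)=\set(\lambda)+\set(\mu)$ ensures this is compatible with tensoring into $\U_\lambda$.

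\textbf{Main obstacle.} The one genuinely non-formal point is verifying that a product $f^{\mathbf{s}}$ with $\mathbf{s}\in\set(\lambda)$ has $\mathbf{d}$-degree \emph{exactly} $\sum_{\beta}s_\beta\,\mathbf{d}(\beta)$ when acting on $\nu_\lambda$ — i.e.\ that $f^{\mathbf{s}}\cdot\nu_\lambda\in(\V_\lambda)^{\mathbf{d}}_{\leq m}\setminus(\V_\lambda)^{\mathbf{d}}_{<m}$ for $m=\sum s_\beta\mathbf{d}(\beta)$, so that its leading class $f^{\mathbf{s}}_{\mathbf{d}}\cdot\nu^{\mathbf{d}}_\lambda$ is nonzero and these leading classes are \emph{linearly independent}. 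Upper-boundedness is automatic from the definition of the filtration; the content is nonvanishing and independence of the leading terms. This is where the FFLV basis property of Theorem \ref{thm:FFLV-degenerate}(2) must be used in an essential way: if a nontrivial combination $\sum_{\mathbf{s}}c_{\mathbf{s}}\,f^{\mathbf{s}}_{\mathbf{d}}\cdot\nu^{\mathbf{d}}_\lambda=0$ held in $\V_\lambda^{\mathbf{d}}$, I would lift it to $\V_\lambda$, restricting the sum to those $\mathbf{s}$ of minimal $\mathbf{d}$-degree $m$ among those with $c_{\mathbf{s}}\neq0$; then $\sum_{\mathbf{d}(\mathbf{s})=m}c_{\mathbf{s}}f^{\mathbf{s}}\cdot\nu_\lambda\in(\V_\lambda)^{\mathbf{d}}_{<m}$, and writing the right-hand side in the FFLV basis and comparing coefficients — the $f^{\mathbf{s}}$ themselves are a basis, so all lower-degree contributions must cancel among a \emph{different} set of FFLV monomials — forces $c_{\mathbf{s}}=0$ for all $\mathbf{s}$ with $\mathbf{d}(\mathbf{s})=m$, a contradiction. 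The care needed is that PBW straightening relations (used to bring an arbitrary product into FFLV-ordered form) do not increase $\mathbf{d}$-degree, which is exactly the content of inequalities $(\mathrm{A}_{i,j,k})$, $(\mathrm{E}_{i,j,k})$, $(\mathrm{F}_{i,j,k})$ from Proposition \ref{Prop:generalinequalities}; this makes the filtration well-behaved under reordering and is the technical heart of the argument.
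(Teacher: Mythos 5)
Your reduction to ``spanning $+$ dimension count'' is sound in outline (the paper also ultimately concludes via $\dim\V_\lambda^{\mathbf{d}}=\dim\V_\lambda=|\set(\lambda)|$), but the step you treat as routine is exactly where the theorem lives, and your justification for it is circular. Both your spanning argument and your independence argument rest on the claim that for an arbitrary multi-exponent $\mathbf{t}$, the FFLV expansion $f^{\mathbf{t}}\cdot\nu_\lambda=\sum_{\mathbf{s}\in\set(\lambda)}c_{\mathbf{s}}f^{\mathbf{s}}\cdot\nu_\lambda$ involves only $\mathbf{s}$ with $\mathfrak{d}^{\mathbf{d}}(\mathbf{s})\le\mathfrak{d}^{\mathbf{d}}(\mathbf{t})$ (equivalently, that $(\V_\lambda)^{\mathbf{d}}_{<m}$ is spanned by the FFLV vectors of $\mathbf{d}$-degree $<m$). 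You attribute this to the inequalities $(\mathrm{A}_{i,j,k})$, $(\mathrm{E}_{i,j,k})$, $(\mathrm{F}_{i,j,k})$, but those only control commutators in $U(\n_-)$, i.e.\ the reordering of a PBW monomial into the fixed order. They say nothing about the second, much harder half of straightening: rewriting $f^{\mathbf{t}}\cdot\nu_\lambda$ for $\mathbf{t}\notin\set(\lambda)$ modulo the annihilator of $\nu_\lambda$ (the relations obtained by applying $U(\n_+)$ to $f_\beta^{\langle\lambda,\beta^\vee\rangle+1}\cdot\nu_\lambda=0$, etc.). Controlling $\mathbf{d}$-degrees under that reduction is precisely the content of the theorem, so invoking it in the ``comparing coefficients'' step of your independence argument assumes what is to be proved. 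Note also that the full force of the cone $\mathcal{K}_{2n}$ — the inequalities $(\mathrm{B})$, $(\mathrm{C})$, $(\mathrm{D})$ and the derived $(\mathrm{G})$, $(\mathrm{H})$ — is needed somewhere; an argument using only $(\mathrm{A})$, $(\mathrm{E})$, $(\mathrm{F})$ cannot be complete.

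The paper closes this gap by a route you explicitly declare unnecessary. For fundamental weights it realizes $\V_{\omega_k}$ inside $\bigwedge^k\mathbb{C}^{2n}$ and proves by a direct minimization argument (Lemmas \ref{Lem:Fund1} and \ref{Lem:Fundk}, which use the whole arsenal of Proposition \ref{Prop:generalinequalities}) that $\mathfrak{d}^{\mathbf{d}}$ attains its minimum on each $M_I^J$ at $\mathbf{s}_{I,J}$; this is what makes the classes $f^{\mathbf{s}}_{\mathbf{d}}\cdot\nu_{\omega_k}^{\mathbf{d}}$ for $\mathbf{s}\in\set(\omega_k)$ span, with no module-level straightening required. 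The general case is then an induction on $\mathrm{ht}(\lambda)$ through the Cartan component of $\V_\lambda^{\mathbf{d}}\otimes\V_\mu^{\mathbf{d}}$ (Lemma \ref{Lem:indep} and Proposition \ref{Prop:Cartan}), where linear independence is obtained by a max/min extremal-term argument in the tensor product rather than by lifting a relation inside a single $\V_\lambda$. So the fundamental-weight analysis is not a later refinement: it is the engine of the proof, and without it (or an equivalent straightening law) your argument does not go through.
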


The rest of this section will be devoted to the proof of the theorem.

\subsection{Compatibility of FFLV basis: fundamental representations}\label{Subsec:Fund}

In this section we start from proving Theorem \ref{Thm:main1} for $\mathbf{d}\in\mathrm{relint}(\mathcal{K}_{2n})$ and $\lambda=\omega_k$, then we will explain how to adapt the proof to deal with those $\mathbf{d}$ in a proper face of $\mathcal{K}_{2n}$.

\subsubsection{Vector representation}
Assume that $\mathbf{d}\in\mathrm{relint}(\mathcal{K}_{2n})$. We study the vector representation $\V_{\omega_1}$. The actions of root vectors on $e_l\in\V_{\omega_1}$ with $1\leq l\leq\ov{1}$ are given by:
\begin{enumerate}
\item for $1\leq i\leq j<n$, \quad $f_{i,j}\cdot e_l=\delta_{i,l}\,e_{j+1}-\delta_{\ov{j+1},l}\,e_{\ov{i}}$;
\item for $1\leq i\leq n$, \qquad\quad $f_{i,\ov{i}}\cdot e_l=\delta_{i,l}\,e_{\ov{i}}$;
\item for $1\leq i< j\leq n$, \quad $f_{i,\ov{j}}\cdot e_l=\delta_{i,l}\,e_{\ov{j}}+\delta_{j,l}\,e_{\ov{i}}$.
\end{enumerate}

For $1\leq i<j\leq\ov{1}$, set 
$$M_i^j:=\{\mathbf{s}\in\mathbb{N}^{\Phi^+}\mid f^{\mathbf{s}}\cdot e_i=\pm e_j\}$$
and consider the following function 
$$\mathfrak{d}^{\mathbf{d}}:\mathbb{N}^{\Phi^+}\to\mathbb{R},\ \ \mathbf{s}\mapsto \mathbf{d}\cdot\mathbf{s}.$$
The set $M_i^j$ is non-empty: from the action of the root vectors, the element $\mathbf{s}_{i,j}$ such that 
$$
f^{\mathbf{s}_{i,j}}=\begin{cases} 
f_{i,j-1}, & \text{if } i,j\leq n; \\
f_{i,j}, & \text{if } i\leq n,\ \overline{n}\leq j\leq\overline{i}; \\
f_{\overline{j},\overline{i}} & \text{if } i\leq n,\ j >\overline{i}; \\
f_{\overline{j},\overline{i}-1}, & \text{if } i,j\geq \overline{n};
\end{cases}
$$
is contained in $M_i^j$.

\begin{lemma}\label{Lem:Fund1}
The function $\mathfrak{d}^{\mathbf{d}}$ has a unique minimum on $M_i^j$ attained at $\mathbf{s}_{i,j}$.
\end{lemma}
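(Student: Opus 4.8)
The plan is to describe $M_i^j$ combinatorially and then reduce the minimality of $\mathbf s_{i,j}$ to the inequalities cutting out $\mathcal K_{2n}$. \emph{Step 1 (combinatorics of $M_i^j$).} From the explicit action of the root vectors on $\V_{\omega_1}$ recalled above, $f_\beta\cdot e_l$ is, for every $\beta\in\Phi^+$ and every $l$, either $0$ or $\pm e_m$ with $\mathrm{wt}(e_m)=\mathrm{wt}(e_l)-\beta$, and then necessarily $l<m$ in the order $1<\dots<n<\overline n<\dots<\overline 1$; in particular $f_\beta^2=0$ on $\V_{\omega_1}$. Hence $f^{\mathbf s}$ kills $\V_{\omega_1}$ as soon as some $s_\beta\ge 2$, so $M_i^j\subseteq\{0,1\}^{\Phi^+}$. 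For $\mathbf s\in M_i^j$, writing $\mathrm{supp}\,\mathbf s=\{\gamma_1,\dots,\gamma_r\}$ in the fixed enumeration order of $\Phi^+$, the evaluation $f^{\mathbf s}\cdot e_i=f_{\gamma_1}\!\cdots f_{\gamma_r}\cdot e_i$ is carried out one root vector at a time and, by the above, at each stage yields $\pm$ a single basis vector; this produces a strictly increasing chain $i=p_0<p_1<\dots<p_r=j$ with $\gamma_{r-t+1}=\mathrm{wt}(e_{p_{t-1}})-\mathrm{wt}(e_{p_t})\in\Phi^+$ for all $t$ (so in particular the $\gamma_t$ are distinct). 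Consequently $\mathfrak d^{\mathbf d}(\mathbf s)=\sum_{t=1}^{r}\mathbf d\big(\mathrm{wt}(e_{p_{t-1}})-\mathrm{wt}(e_{p_t})\big)$, and $\mathbf s_{i,j}$ is precisely the element of $M_i^j$ coming from the trivial chain $r=1$.

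\emph{Step 2 (reduction to a triangle inequality).} Write $\beta(p,q):=\mathrm{wt}(e_p)-\mathrm{wt}(e_q)\in\Phi^+$ (the root carried by a single root vector sending $e_p$ to $\pm e_q$) for $p<q$; then $\beta(p,q)+\beta(q,s)=\beta(p,s)$. By Step 1 the lemma follows once we know that for $\mathbf d\in\mathrm{relint}\,\mathcal K_{2n}$ and every chain with $r\ge 2$ one has $\sum_{t}\mathbf d(\beta(p_{t-1},p_t))>\mathbf d(\beta(i,j))$. Replacing the first two steps $p_0<p_1<p_2$ by the single step $p_0<p_2$ and inducting on $r$ (applying the claim to $p_0<p_2<p_3<\dots<p_r$), this reduces to the case $r=2$, i.e.\ to
\[
\mathbf d(\beta(p,q))+\mathbf d(\beta(q,s))\ \ge\ \mathbf d(\beta(p,s))\qquad(p<q<s),
\]
which by Remark \ref{Rem:Strict} then only has to be made strict once.

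\emph{Step 3 (the triangle inequality).} This is a finite case analysis on whether $p,q,s$ are $\le n$ or barred and on the relative positions of the underlying unbarred indices; in each case $\beta(p,q),\beta(q,s),\beta(p,s)$ are explicit root vectors and the inequality is read off directly. When both steps remain among unbarred indices, or both among barred ones, it is an instance of $(\mathrm A_{\bullet,\bullet,\bullet})$ of Proposition \ref{Prop:generalinequalities}; when the last step crosses from an unbarred to a barred index it is an instance of $(\mathrm E_{\bullet,\bullet,\bullet})$ or $(\mathrm F_{\bullet,\bullet,\bullet})$ there. The only configurations not covered by a single listed inequality are those in which the chain ``overshoots'' an index; they are obtained by adding two or three of the inequalities of Proposition \ref{Prop:generalinequalities}, e.g.\ $(\mathrm C_{x,p,p,q})+(\mathrm E_{p,q-1,p})$ when $p<q\le n$ and $s=\overline x$ with $x<p$, and $(\mathrm C_{y,i,i,x})+(\mathrm D_{i,x})+(\mathrm E_{y,x-1,x})$ when $p=i\le n$, $q=\overline x$, $s=\overline y$ with $y<i<x$; the remaining overshooting patterns are handled by entirely analogous combinations. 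All the ingredients are defining facets of $\mathcal K_{2n}$ or (by the proof of Proposition \ref{Prop:generalinequalities}) non-negative combinations of them, hence strict on $\mathrm{relint}\,\mathcal K_{2n}$; this yields the strict triangle inequality and, with Step 2, the lemma.

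I expect Step 3 to be the main obstacle: translating the $\symplectic_{2n}$-action on $\V_{\omega_1}$ into exactly the right inequalities and, for the overshooting chains, spotting the correct two- or three-term combinations of cone inequalities. Steps 1 and 2 are routine once the square-zero property and the identity $\beta(p,q)+\beta(q,s)=\beta(p,s)$ are in place.
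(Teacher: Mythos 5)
Your proposal is correct, and at its core it runs parallel to the paper's argument: both reduce the lemma to the statement that merging two consecutive steps $p<q<s$ of a factorization strictly decreases $\mathfrak{d}^{\mathbf{d}}$, and both verify this ``triangle inequality'' case by case from Proposition \ref{Prop:generalinequalities}. The organization is genuinely different, though. The paper inducts on $j-i$ and uses the induction hypothesis (uniqueness of the minimum on shorter intervals) to force a hypothetical minimizer with $|\mathbf{s}|\geq 2$ into one of the six two-factor monomials of \eqref{Eq:Mins}, each excluded by one strict inequality. You instead first prove the structural fact that every $f_\beta$ squares to zero on $\V_{\omega_1}$, so that $M_i^j$ consists exactly of increasing chains $i=p_0<\dots<p_r=j$ with pairwise distinct associated roots, and then induct on the chain length via the telescoping identity $\beta(p,q)+\beta(q,s)=\beta(p,s)$. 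This buys a complete combinatorial description of $M_i^j$ (the paper never needs one) at the cost of checking the triangle inequality for \emph{all} triples, including the ``overshooting'' configurations where the combined root has a smaller first index than $p$; your two displayed combinations are verified correct (though the second, $y<i<x$, is in fact the single listed inequality $(\mathrm{E}_{y,x-1,i})$ and needs no combination), and the remaining pattern with $p<c<b$ and $q=\ov{b}$, $s=\ov{c}$ is indeed handled by the analogous combination $(\mathrm{C}_{p,c,c,b})+(\mathrm{E}_{c,b-1,c})$, so your ``entirely analogous'' claim is sound. Incidentally, your explicit treatment of the overshooting cases fills in a point the paper's own citation $(\mathrm{E}_{i,k-1,\ov{j}})$ passes over when $\ov{j}<i$.
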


\begin{proof}
We proceed by induction on $j-i$. When $j=i+1$, the set $M_i^j$ contains only the element $\mathbf{s}_{i,j}$, and there is nothing to prove.

In general, take $\mathbf{s}=(s_\beta)_{\beta\in\Phi^+}\in M_i^j$ and denote $|\mathbf{s}|:=\sum_{\beta\in\Phi^+}s_\beta$. 

Assume furthermore that $\mathfrak{d}^{\mathbf{d}}$ attains minimum at $\mathbf{s}$. If $|\mathbf{s}| \geq 2$, there must be a root vector in the monomial $f^{\mathbf{s}}$ sending $e_i$ to a non-zero multiple of some $e_k$ with $i<k<j$. The remaining part of this monomial has the form $f^{\mathbf{t}}$ with $\mathbf{t}\in M_k^j$. Since $j-k<j-i$, by induction, in order to minimize $\mathfrak{d}^{\mathbf{d}}$, $\mathbf{t}$ must be $\mathbf{s}_{k,j}$. This implies $|\mathbf{s}|=2$ and it has the following form
\begin{equation}\label{Eq:Mins}
f^{\mathbf{s}}=\begin{cases} 
f_{i,k-1}f_{k,j-1}, & \text{if } j\leq n; \\
f_{i,k-1}f_{k,j}, & \text{if } k\leq n,\ \overline{n}\leq j\leq\overline{k}; \\
f_{i,k-1}f_{\ov{j},\ov{k}}, & \text{if } k\leq n,\ j>\overline{k}; \\
f_{i,k}f_{\overline{j},\overline{k}-1}, & \text{if } i\leq n,\ \overline{n}\leq k\leq\overline{i}; \\
f_{\ov{k},\ov{i}}f_{\overline{j},\overline{k}-1}, & \text{if } i\leq n,\ k>\overline{i}; \\
f_{\overline{k},\overline{i}-1}f_{\overline{j},\overline{k}-1}, & \text{if } i\geq \overline{n}.
\end{cases}
\end{equation}
Since $\mathbf{d}$ is chosen from the relative interior of $\mathcal{K}_{2n}$, by Remark \ref{Rem:Strict}, the inequalities in Proposition \ref{Prop:generalinequalities} are strict. According to the inequalities ($\mathrm{A}_{i,k-1,j-1}$), ($\mathrm{F}_{i,k-1,\overline{j}}$),  ($\mathrm{E}_{i,k-1,\overline{j}}$), ($\mathrm{E}_{i,k,\overline{j}}$),  ($\mathrm{F}_{\overline{j},\overline{k}-1,i}$) and ($\mathrm{A}_{\overline{j},\overline{k}-1,\overline{i}-1}$) respectively, the monomials in \eqref{Eq:Mins} can not minimize $\mathfrak{d}^{\mathbf{d}}$.

We have shown that any $|\mathbf{s}|\geq 2$ can not minimize $\mathfrak{d}^{\mathbf{d}}$, therefore $|\mathbf{s}|=1$ and $f^{\mathbf{s}}$ has to be $f^{\mathbf{s}_{i,j}}$.
\end{proof}

\begin{proposition}
   For $\mathbf{d}\in\mathrm{relint}(\mathcal{K}_{2n})$, the set $\{f^{\mathbf{s}}_{\mathbf{d}}\cdot\nu^{\mathbf{d}}_{\omega_1}\ |\  \mathbf{s}\in \set(\omega_1)\}$ forms a basis for $\V_{\omega_1}^{\mathbf{d}}$.
\end{proposition}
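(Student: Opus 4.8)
The plan is to compute the $\mathbf{d}$-filtration degree of each standard basis vector of $\V_{\omega_1}=\mathbb{C}^{2n}$ and then read off the vectors $f^{\mathbf{s}}_{\mathbf{d}}\cdot\nu^{\mathbf{d}}_{\omega_1}$ in the associated graded. I would begin with two preliminary observations. First, $\set(\omega_1)=\{\mathbf{0}\}\cup\{\mathbf{s}_{1,j}\mid 2\le j\le\overline{1}\}$: the point $\mathbf{0}$ lies in $\mathrm{FFLV}(\omega_1)$ trivially, and so does each $\mathbf{s}_{1,j}$, which is the indicator of a single root $\alpha_{1,j'}\in\Phi_1^+$ --- indeed any symplectic Dyck path through $\alpha_{1,j'}$ must begin at $\alpha_1$ (the first index never decreases along a Dyck path), so the relevant FFLV inequality reads $1\le m_1+\cdots=1$; these $2n$ lattice points are distinct, and $|\set(\omega_1)|=\dim\V_{\omega_1}=2n$ by Theorem~\ref{thm:FFLV-degenerate}(2), so they exhaust $\set(\omega_1)$. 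Second, the action formulas for the root vectors on $\mathbb{C}^{2n}$ give $f^{\mathbf{0}}\cdot e_1=e_1=\nu_{\omega_1}$ and $f^{\mathbf{s}_{1,j}}\cdot e_1=e_j$ for every $j$.

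For $2\le j\le\overline{1}$ I would then set $m_j:=\mathbf{d}\cdot\mathbf{s}_{1,j}$ and $m_1:=0$; by Lemma~\ref{Lem:Fund1} with $i=1$, $m_j=\min\{\mathbf{d}\cdot\mathbf{s}\mid\mathbf{s}\in M_1^j\}$ is the least degree in which $e_j$ can be reached from $e_1$. The crucial claim is that
\[(\V_{\omega_1})^{\mathbf{d}}_{\le m}=\mathrm{span}\{e_j\mid m_j\le m\}\qquad\text{and}\qquad (\V_{\omega_1})^{\mathbf{d}}_{<m}=\mathrm{span}\{e_j\mid m_j< m\}.\]
The inclusion $\supseteq$ is immediate since $e_j=f^{\mathbf{s}_{1,j}}\cdot e_1$ with $f^{\mathbf{s}_{1,j}}\in U(\n_-)^{\mathbf{d}}_{\le m_j}$. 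For $\subseteq$, every element of $U(\n_-)^{\mathbf{d}}_{\le m}$ is, after reordering its root vectors by PBW, a linear combination of ordered monomials $f^{\mathbf{s}}$ with $\mathbf{d}\cdot\mathbf{s}\le m$ --- reordering does not raise the $\mathbf{d}$-degree because $[f_\alpha,f_\beta]$ is a scalar multiple of $f_{\alpha+\beta}$ and $\mathbf{d}(\alpha+\beta)\le\mathbf{d}(\alpha)+\mathbf{d}(\beta)$ by the inequalities $(\mathrm{A}_{i,j,k})$, $(\mathrm{E}_{i,j,k})$, $(\mathrm{F}_{i,j,k})$ of Proposition~\ref{Prop:generalinequalities} (this is exactly the content underlying Proposition~\ref{Prop:Isod}) --- and each $f^{\mathbf{s}}\cdot e_1$ is $0$ or $\pm e_k$ for some $k$, forcing $\mathbf{d}\cdot\mathbf{s}\ge m_k$ (for $k\ge 2$ because $\mathbf{s}\in M_1^k$ and $m_k$ is the minimum there; and $k=1$ only when $\mathbf{s}=\mathbf{0}$, $\mathbf{d}\cdot\mathbf{s}=0=m_1$). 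Granting the claim, $(\V_{\omega_1}^{\mathbf{d}})_m=(\V_{\omega_1})^{\mathbf{d}}_{\le m}/(\V_{\omega_1})^{\mathbf{d}}_{<m}$ has basis the images $\overline{e_j}$ of those $e_j$ with $m_j=m$, so $\{\overline{e_j}\mid 1\le j\le\overline{1}\}$ is a homogeneous basis of $\V_{\omega_1}^{\mathbf{d}}$.

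Finally I would match the FFLV vectors with this basis. One has $f^{\mathbf{0}}_{\mathbf{d}}\cdot\nu^{\mathbf{d}}_{\omega_1}=\nu^{\mathbf{d}}_{\omega_1}=\overline{e_1}$; and for $j\ge 2$ the class $f^{\mathbf{s}_{1,j}}_{\mathbf{d}}$ of the single PBW generator $f^{\mathbf{s}_{1,j}}$ is a nonzero homogeneous element of degree $m_j$ in $U(\n_-)^{\mathbf{d}}$ (it is not a combination of monomials of strictly smaller $\mathbf{d}$-degree, by PBW), so applying the graded $U(\n_-)^{\mathbf{d}}$-module structure, $f^{\mathbf{s}_{1,j}}_{\mathbf{d}}\cdot\nu^{\mathbf{d}}_{\omega_1}$ is the class of $f^{\mathbf{s}_{1,j}}\cdot e_1=e_j$ in $(\V_{\omega_1})^{\mathbf{d}}_{\le m_j}/(\V_{\omega_1})^{\mathbf{d}}_{<m_j}$, which by the claim is $\overline{e_j}\neq 0$. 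Hence $\{f^{\mathbf{s}}_{\mathbf{d}}\cdot\nu^{\mathbf{d}}_{\omega_1}\mid\mathbf{s}\in\set(\omega_1)\}=\{\overline{e_j}\mid 1\le j\le\overline{1}\}$ is a basis of $\V_{\omega_1}^{\mathbf{d}}$. I expect the only genuinely substantive step to be the inclusion $\subseteq$ in the displayed claim --- that the $\mathbf{d}$-filtration on $\V_{\omega_1}$ does not mix the $e_j$ across their prescribed degrees --- and this is exactly where Lemma~\ref{Lem:Fund1}, sharpened by Remark~\ref{Rem:Strict} for $\mathbf{d}$ in the interior, is indispensable; everything else is bookkeeping once the degrees $m_j$ are known.
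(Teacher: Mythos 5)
Your proof is correct and takes essentially the same route as the paper's: both rest on Lemma \ref{Lem:Fund1} (that $\mathbf{s}_{1,j}$ minimizes $\mathfrak{d}^{\mathbf{d}}$ on $M_1^j$), on the identification of $\set(\omega_1)$ with $\{\mathbf{0}\}\cup\{\mathbf{s}_{1,j}\mid 2\le j\le\overline{1}\}$, and on the count $|\set(\omega_1)|=\dim\V_{\omega_1}$. The paper states these facts and immediately concludes spanning plus cardinality, whereas you unwind the filtration $(\V_{\omega_1})^{\mathbf{d}}_{\le m}$ explicitly and read off linear independence directly; this is a more detailed write-up of the same argument rather than a different one.
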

\begin{proof}
    By \cite[Lemma 4.1]{FFL11b}, non-zero elements in $\set(\omega_1)$ are precisely $\{\mathbf{s}_{1,j}\mid 2\leq j\leq\overline{1}\}$. Now Lemma \ref{Lem:Fund1} implies that $\{f^{\mathbf{s}}_{\mathbf{d}}\cdot\nu^{\mathbf{d}}_{\omega_1}\ |\  \mathbf{s}\in \set(\omega_1)\}$ spans $\V_{\omega_1}^{\mathbf{d}}$. By Theorem \ref{thm:FFLV-degenerate}, $|\set(\omega_1)|=\dim\V_{\omega_1}^{\mathbf{d}}$, hence the above generating set forms indeed a basis of $\V_{\omega_1}^{\mathbf{d}}$. 
\end{proof}

\subsubsection{General case}\label{Sec:GenCase}

We prove Theorem \ref{Thm:main1} for $\mathbf{d}\in\mathrm{relint}(\mathcal{K}_{2n})$ and an arbitrary fundamental representation $V_{\omega_k}$. The representation $V_{\omega_k}$ is a subrepresentation of ${\bigwedge}^k\mathbb{C}^{2n}$ generated by the highest weight vector $\nu_{\omega_k}:= e_1\wedge\ldots\wedge e_k$.

For a subset $I=\{i_1,\ldots,i_k\}$ of $\{1,\ldots,n,\overline{n},\ldots,\overline{1}\}$ with $i_1<\ldots<i_k$, we denote $e_I:=e_{i_1}\wedge\ldots\wedge e_{i_k}\in{\bigwedge}^k\mathbb{C}^{2n}$ and $e_I^*$ its dual basis element. For another subset $J=\{j_1,\ldots,j_k\}$ of $\{1,\ldots,n,\overline{n},\ldots,\overline{1}\}$ with $j_1<\ldots <j_k$, we define
$$M_I^J:=\{\mathbf{s}\in\mathbb{N}^{\Phi^+}\mid e_J^*(f^{\mathbf{s}}\cdot e_I)\neq 0\}.$$

We describe an element in $M_I^J$ to show that it is non-empty. We set $K=I\cap J$, $P=\{p_1,\ldots,p_s\}=I\setminus K$ with $p_1<\ldots<p_s$ and $Q=\{q_1,\ldots,q_s\}=J\setminus K$ with $q_1<\ldots<q_s$. The element 
\begin{equation}\label{Eq:SIJ}
\mathbf{s}_{I,J}:=\mathbf{s}_{p_1,q_s}+\ldots+\mathbf{s}_{p_s,q_1}
\end{equation}
is contained in $M_I^J$. In fact we have $\mathbf{s}_{I,J}\in M_{P}^Q$.

\begin{lemma}\label{Lem:Fundk}
The function $\mathfrak{d}^{\mathbf{d}}$ has a unique minimum on $M_I^J$ attained at $\mathbf{s}_{I,J}$.
\end{lemma}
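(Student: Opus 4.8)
The goal is to show that $\mathfrak{d}^{\mathbf{d}}$ attains a unique minimum on $M_I^J$ at $\mathbf{s}_{I,J}$, for $\mathbf{d}\in\mathrm{relint}(\mathcal{K}_{2n})$. The plan is to reduce the statement for exterior powers to the already-established $k=1$ case (Lemma \ref{Lem:Fund1}) by a ``decoupling'' argument: any monomial $f^{\mathbf{s}}$ acting on $e_I = e_{i_1}\wedge\ldots\wedge e_{i_k}$ expands, via the Leibniz rule, into a sum of wedge products in which the constituent root vectors get distributed among the tensor factors. First I would set up notation to track, for a given $\mathbf{s}\in M_I^J$, which factor of the wedge each root vector is ``assigned to'' in a nonzero summand of $e_J^*(f^{\mathbf{s}}\cdot e_I)$; since $\bigwedge^k\mathbb{C}^{2n}$ is a quotient of the tensor power, a nonzero contribution to $e_J^*$ forces the multiset $\{i_1,\ldots,i_k\}$ to be sent bijectively onto $\{j_1,\ldots,j_k\}$ (up to sign), so $\mathbf{s}$ decomposes as a sum $\mathbf{s} = \sum_{l} \mathbf{t}_l$ with $f^{\mathbf{t}_l}\cdot e_{i_l} = \pm e_{\sigma(l)}$ for some permutation $\sigma$ matching $I$ to $J$.

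The second step is the reduction itself. Since $\mathfrak{d}^{\mathbf{d}}(\mathbf{s}) = \sum_l \mathfrak{d}^{\mathbf{d}}(\mathbf{t}_l)$ and each $\mathbf{t}_l$ lies in $M_{i_l}^{\sigma(l)}$ (or is zero when $i_l = \sigma(l)$, i.e.\ on the common part $K$), Lemma \ref{Lem:Fund1} tells us that for each $l$ the minimum of $\mathfrak{d}^{\mathbf{d}}$ over $M_{i_l}^{\sigma(l)}$ is uniquely attained at $\mathbf{s}_{i_l,\sigma(l)}$. Hence minimizing over all $\mathbf{s}\in M_I^J$ amounts to (a) choosing the matching $\sigma$ optimally and (b) taking $\mathbf{t}_l = \mathbf{s}_{i_l,\sigma(l)}$ for each $l$. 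It remains to show the optimal $\sigma$ is the ``crossing'' one: matching $P=\{p_1<\ldots<p_s\}$ to $Q=\{q_1<\ldots<q_s\}$ in reversed order $p_r \mapsto q_{s+1-r}$, and the identity on $K$. The quantity to be minimized is $\sum_r \mathfrak{d}^{\mathbf{d}}(\mathbf{s}_{p_r,q_{\tau(r)}})$ over bijections $\tau$; I would argue this is minimized by the anti-sorted matching via an exchange argument, showing that if $\tau$ is not anti-sorted then swapping a badly-ordered pair strictly decreases the sum. This reduces to the two-element inequality $\mathfrak{d}^{\mathbf{d}}(\mathbf{s}_{a,c}) + \mathfrak{d}^{\mathbf{d}}(\mathbf{s}_{b,d}) > \mathfrak{d}^{\mathbf{d}}(\mathbf{s}_{a,d}) + \mathfrak{d}^{\mathbf{d}}(\mathbf{s}_{b,c})$ whenever $a<b$ and $c<d$ (with the relevant nesting of the intervals), which unwinds to a strict instance of the ``super/sub-modularity'' type inequalities $(\mathrm{B}_{i,j,k,l})$, $(\mathrm{C}_{i,j,k,l})$, $(\mathrm{D}_{i,j})$, $(\mathrm{G}_{i,j,k,l})$, $(\mathrm{H}_{i,j,k,l})$ of Proposition \ref{Prop:generalinequalities}, which are strict by Remark \ref{Rem:Strict}. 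One also needs to check that with the anti-sorted matching the resulting $\mathbf{s}_{I,J}$ agrees with \eqref{Eq:SIJ} and genuinely lies in $M_I^J$ (the sign/nonvanishing check on the wedge), and that no ``longer'' monomial distribution beats it—but that is exactly what Lemma \ref{Lem:Fund1} rules out factor by factor.

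The main obstacle I anticipate is bookkeeping in the first step: making precise, with correct signs, the claim that a nonzero value of $e_J^*$ on $f^{\mathbf{s}}\cdot e_I$ forces a clean decomposition of $\mathbf{s}$ into pieces acting factorwise—because the root vectors $f_\beta$ do not act diagonally (they can send $e_l$ to a combination like $e_{j+1}$ \emph{and} $e_{\bar i}$), different distributions of the same monomial can interfere, and one must rule out cancellation of the minimal term against contributions of other distributions. I would handle this by observing that among all distributions contributing, the one supported on a minimal-length (length-one-per-factor) assignment has strictly smallest $\mathbf{d}$-value by Lemma \ref{Lem:Fund1} applied factorwise, so it cannot cancel against the others, which are either of strictly larger $\mathbf{d}$-degree or involve an index outside $J$. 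The remaining steps—the exchange argument and the translation into Proposition \ref{Prop:generalinequalities}—are then routine, case-by-case verifications over the positions of $a,b,c,d$ relative to $n$ and $\bar n$, entirely parallel to the six-case analysis already carried out in the proof of Lemma \ref{Lem:Fund1}.
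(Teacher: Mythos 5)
Your proposal is correct and follows essentially the same route as the paper's proof: reduce to factorwise action on the wedge, use Lemma \ref{Lem:Fund1} to force a single root vector per factor, and then identify the optimal matching by an exchange argument over permutations (the paper phrases this via Bruhat covering relations) using the strict inequalities of Proposition \ref{Prop:generalinequalities}. The only cosmetic difference is that the paper first disposes of the case $I\cap J\neq\emptyset$ by a separate induction on $k$ reducing to $M_P^Q$, whereas you fold the common part $K$ into the matching as zero-cost fixed points; both hinge on the same application of Lemma \ref{Lem:Fund1}.
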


\begin{proof}
We proceed by induction on $k$, the index of the fundamental representation. The starting point $k=1$ is Lemma \ref{Lem:Fund1}.

We first show that it suffices to prove the lemma under the assumption $I\cap J=\emptyset$. 

If this intersection is not empty, there exists $r$ and $m$ such that $i_r=j_m$. Let $\mathbf{m}$ be a minimum of $\mathfrak{d}^{\mathbf{d}}$ on $M_I^J$. We show that the monomial $f^{\mathbf{m}}$ does not contain root vectors of the form $f^{\mathbf{s}_{i_r,t}}$ for $t>i_r$ nor root vectors of the form $f^{\mathbf{s}_{t,j_m}}$ for $t<j_m$. If it contains for example $f^{\mathbf{s}_{i_r,t}}$, it would also contain $f^{\mathbf{s}_{p,j_m}}$ for some $p<j_m$ because $i_r=j_m$. But by Lemma \ref{Lem:Fund1}, $\mathfrak{d}^{\mathbf{d}}$ would take smaller value at $f^{\mathbf{s}_{p,t}}$. Now the element 
$$\mathbf{m'}:=\mathbf{m}-\mathbf{s}_{i_r,t}-\mathbf{s}_{p,j_m}+\mathbf{s}_{p,t}$$ 
is still in $M_I^J$ with $\mathfrak{d}^{\mathbf{d}}(\mathbf{m}')<\mathfrak{d}^{\mathbf{d}}(\mathbf{m})$. This contradicts to the choice of $\mathbf{m}$. Similar argument shows the statement on $f^{\mathbf{s}_{t,j_m}}$.  As a consequence, such an element $\mathbf{m}$ is in fact contained in $M_P^Q$ with $P=I\setminus (I\cap J)$ and $Q=J\setminus (I\cap J)$. Since $|P|<k$, we can proceed by induction.

From now on we assume that $I\cap J=\emptyset$. From the action on the tensor product, for $\mathbf{t}\in M_I^J$, $f^{\mathbf{t}}\cdot e_I$ is a linear combination of $f^{\mathbf{t}_1}\cdot e_{i_1}\wedge\ldots \wedge f^{\mathbf{t}_k}\cdot e_{i_k}$ with $\mathbf{t}_1+\ldots+\mathbf{t}_k=\mathbf{t}$. Since $\mathbf{t}\in M_I^J$, there exists at least one collection $\{\mathbf{t}_1,\ldots,\mathbf{t}_k\}$ such that $f^{\mathbf{t}_1}\cdot e_{i_1}\wedge\ldots \wedge f^{\mathbf{t}_k}\cdot e_{i_k}$ is proportional to $e_J$. By Lemma \ref{Lem:Fund1}, in order to minimize $\mathfrak{d}^{\mathbf{d}}$, we have to choose those $\mathbf{t}_1,\ldots,\mathbf{t}_k$ with $|\mathbf{t}_r|\leq 1$ for $r=1,\ldots,k$. The assumption $I\cap J=\emptyset$ implies $|\mathbf{t}_1|=\ldots=|\mathbf{t}_k|=1$. Again, to minimize $\mathfrak{d}^{\mathbf{d}}$, it is necessary that $\mathbf{t}_r$ has the form $\mathbf{s}_{i_r,j}$ where the index $j$ is uniquely determined by: $f^{\mathbf{t}_r}\cdot e_{i_r}=\pm e_j$.

We reformulate the above observation using symmetric group. For any $\sigma\in \mathfrak{S}_k$ we associate to it an element 
$$\mathbf{s}_\sigma:=\mathbf{s}_{i_1,j_{\sigma(1)}}+\ldots+\mathbf{s}_{i_k,j_{\sigma(k)}}\in\mathbb{N}^{\Phi^+}.$$
From definition, $\mathbf{s}_\sigma\in M_I^J$. The above argument shows that a minimum of $\mathfrak{d}^{\mathbf{d}}$ can not be attained outside of the set $\{\mathbf{s}_\sigma\mid\sigma\in\mathfrak{S}_k\}$.

We claim that if $\sigma>\sigma'$ in the Bruhat order of $\mathfrak{S}_k$, then $\mathfrak{d}^{\mathbf{d}}(\mathbf{s}_\sigma)<\mathfrak{d}^{\mathbf{d}}(\mathbf{s}_{\sigma'})$. This will terminate the proof since $\mathbf{s}_{I,J}=\mathbf{s}_{w_0}$ where $w_0$ is the unique maximal element in $\mathfrak{S}_k$ sending $i$ to $k+1-i$.

It remains to prove the claim. Keeping $\mathbf{d}$ in mind, we will simply write $s_{i,j}:=\mathfrak{d}^{\mathbf{d}}(\mathbf{s}_{i,j})$ and 
$$s_\sigma:=\sum_{r=1}^k s_{i_r,j_{\sigma(r)}}.$$
It suffices to consider the case when $\sigma>\sigma'$ is a covering relation in the Bruhat poset. There exists therefore a permutation $\sigma_{p,q}$, $1\leq p<q\leq k$, swapping $p$ and $q$, such that $\sigma=\sigma'\sigma_{p,q}$. In this case,
$$s_\sigma-s_{\sigma'}=s_{i_p,j_{\sigma'(q)}}+s_{i_q,j_{\sigma'(p)}}-s_{i_p,j_{\sigma'(p)}}-s_{i_q,j_{\sigma'(q)}}.$$
Notice that from $\sigma>\sigma'$, $p<q$ and $I\cap J=\emptyset$ it follows $i_p<i_q<j_{\sigma'(p)}<j_{\sigma'(q)}$.

There are several cases to consider:
\begin{enumerate}
\item[(i)] $j_{\sigma'(q)}\leq n$. In this case, the strict inequality ($\mathrm{B}_{i_p,i_q,j_{\sigma'(p)}-1,j_{\sigma'(q)}-1}$) in Proposition \ref{Prop:generalinequalities} gives $s_\sigma-s_{\sigma'}<0$.
\item[(ii)] $i_p\geq\overline{n}$. Let $\alpha:=\overline{j}_{\sigma'(q)}$, $\beta:=\overline{j}_{\sigma'(p)}$, $\gamma:=\overline{i}_{q}-1$ and $\delta:=\overline{i}_{p}-1$. Then $\alpha<\beta\leq\gamma<\delta$ and the strict inequality ($\mathrm{B}_{\alpha,\gamma,\beta,\delta}$) in Proposition \ref{Prop:generalinequalities} gives $s_\sigma-s_{\sigma'}<0$.
\item[(iii)] $i_q\leq n$ and $j_{\sigma'(p)}\geq\overline{n}$. We execute a case-by-case analysis:
\begin{enumerate}
\item[(iii.1)] We consider the case when $j_{\sigma'(q)}>\overline{i}_p$.
\begin{itemize}
\item If $\overline{i}_q<j_{\sigma'(p)}\leq \overline{i}_p$, it follows from the strict inequality ($\mathrm{H}_{\overline{j}_{\sigma'(q)},\overline{j}_{\sigma'(p)},i_p,i_q}$) in Proposition \ref{Prop:generalinequalities} that $s_\sigma-s_{\sigma'}<0$.
\item If $j_{\sigma'(p)}\leq \overline{i}_q$, it follows from the strict inequality ($\mathrm{G}_{\overline{j}_{\sigma'(q)},i_q,i_p,\overline{j}_{\sigma'(p)}}$) in Proposition \ref{Prop:generalinequalities} that $s_\sigma-s_{\sigma'}<0$.
\item If $j_{\sigma'(q)}> \overline{i}_p$, it follows from the strict inequality ($\mathrm{C}_{\overline{j}_{\sigma'(q)},i_p,\overline{j}_{\sigma'(p)},i_q}$) in Proposition \ref{Prop:generalinequalities} that $s_\sigma-s_{\sigma'}<0$.
\end{itemize}
\item[(iii.2)] We consider the case when $j_{\sigma'(q)}\leq\overline{i}_p$.
\begin{itemize}
\item If $j_{\sigma'(p)}> \overline{i}_q$, it follows from the strict inequality ($\mathrm{G}_{i_p,\overline{j}_{\sigma'(p)},\overline{j}_{\sigma'(q)},i_q}$) in Proposition \ref{Prop:generalinequalities} that $s_\sigma-s_{\sigma'}<0$.
\item If $j_{\sigma'(p)}\leq \overline{i}_q$ and $j_{\sigma'(q)}> \overline{i}_q$, it follows from the strict inequality ($\mathrm{H}_{i_p,i_q,\overline{j}_{\sigma'(q)},\overline{j}_{\sigma'(p)}}$) in Proposition \ref{Prop:generalinequalities} that $s_\sigma-s_{\sigma'}<0$.
\item If $j_{\sigma'(q)}< \overline{i}_q$, it follows from the strict inequality ($\mathrm{C}_{i_p,\overline{j}_{\sigma'(q)},i_q,\overline{j}_{\sigma'(p)}}$) in Proposition \ref{Prop:generalinequalities} that $s_\sigma-s_{\sigma'}<0$.
\end{itemize}
\end{enumerate}
\item[(iv)] $j_{\sigma'(p)}\leq n$ and $j_{\sigma'(q)}\geq\overline{n}$. This and the next cases are similar but simpler than (iii), the verifications are left to the reader.
\item[(v)] $i_p\leq n$ and $i_q\geq \overline{n}$. See (iv).
\end{enumerate}

The proof is then complete.
\end{proof}


\begin{proposition}
    For $\mathbf{d}\in\mathrm{relint}(\mathcal{K}_{2n})$, the set $\{f_{\mathbf{d}}^{\mathbf{s}}\cdot\nu_{\omega_k}^{\mathbf{d}}\mid \mathbf{s}\in \set(\omega_k)\}$ forms a basis for $\V_{\omega_k}^{\mathbf{d}}$.
\end{proposition}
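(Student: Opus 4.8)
The plan is to produce a basis of $\V_{\omega_k}^{\mathbf{d}}$ out of the distinguished monomials $\mathbf{s}_{[k],J}$ attached (via \eqref{Eq:SIJ} with $I=[k]$) to the admissible $k$-subsets $J$ of $\{1,\dots,n,\ov n,\dots,\ov 1\}$ that index the weight basis $\{e_J\}$ of $\V_{\omega_k}\hookrightarrow\bigwedge^k\mathbb{C}^{2n}$. Two facts enter as input: by Theorem \ref{thm:FFLV-degenerate} one has $|\set(\omega_k)|=\dim\V_{\omega_k}=\dim\V_{\omega_k}^{\mathbf{d}}$, and by \cite{FFL11b} (via the chain-polytope description of $\mathrm{FFLV}(\omega_k)$ already invoked in the proof of Lemma \ref{Lem:StdDec}: lattice points $\leftrightarrow$ antichains of $\Phi_k^+$ $\leftrightarrow$ admissible columns) the assignment $J\mapsto\mathbf{s}_{[k],J}$ is a bijection from the admissible $k$-subsets onto $\set(\omega_k)$. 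Granting these, it suffices to show that $\{f^{\mathbf{s}_{[k],J}}_{\mathbf{d}}\cdot\nu^{\mathbf{d}}_{\omega_k}\}_{J}$ is linearly independent in $\V_{\omega_k}^{\mathbf{d}}$; in fact I will identify each of these elements, up to a nonzero scalar, with the class of $e_J$ in a suitable graded piece.

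For an admissible $J$ set $m_J:=\mathbf{d}\cdot\mathbf{s}_{[k],J}$. Since $\mathbf{s}_{[k],J}\in M^J_{[k]}$, Lemma \ref{Lem:Fundk} gives $m_J=\min\{\mathbf{d}\cdot\mathbf{t}\mid\mathbf{t}\in M^J_{[k]}\}$, the minimum being attained only at $\mathbf{s}_{[k],J}$. The heart of the argument is the description
\[
(\V_{\omega_k})^{\mathbf{d}}_{\leq m}=\{\,v\in\V_{\omega_k}\mid e_J^*(v)=0\ \text{for every admissible }J\text{ with }m_J>m\,\},
\]
together with its variant for $<m$. The inclusion "$\subseteq$" is immediate: $(\V_{\omega_k})^{\mathbf{d}}_{\leq m}$ is spanned by the vectors $f^{\mathbf{t}}\cdot\nu_{\omega_k}$ with $\mathbf{d}\cdot\mathbf{t}\leq m$, and $e_J^*(f^{\mathbf{t}}\cdot\nu_{\omega_k})\neq0$ forces $\mathbf{t}\in M^J_{[k]}$, hence $m\geq\mathbf{d}\cdot\mathbf{t}\geq m_J$. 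For "$\supseteq$" I would proceed by a lexicographic induction on the multiset $\{m_J\mid J\text{ admissible},\ e_J^*(v)\neq0\}$: if $v\neq 0$ satisfies the right-hand vanishing, pick an admissible $J_0$ with $e^*_{J_0}(v)\neq0$ and $m_{J_0}$ maximal, note that $f^{\mathbf{s}_{[k],J_0}}\cdot\nu_{\omega_k}$ lies in $(\V_{\omega_k})^{\mathbf{d}}_{\leq m_{J_0}}$ and has nonzero $e^*_{J_0}$-coordinate, and subtract a suitable scalar multiple of it from $v$. The crucial point---where Lemma \ref{Lem:Fundk} is used essentially---is that $e_J^*(f^{\mathbf{s}_{[k],J_0}}\cdot\nu_{\omega_k})=0$ for every admissible $J\neq J_0$ with $m_J\geq m_{J_0}$: otherwise $\mathbf{s}_{[k],J_0}$ would be a minimizer of $\mathbf{d}\cdot(-)$ on $M^J_{[k]}$ distinct from $\mathbf{s}_{[k],J}$, contradicting uniqueness. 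Hence the subtraction only introduces new support at strictly smaller values of $m$, and the induction closes.

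Once this description is in hand, the graded piece $(\V_{\omega_k}^{\mathbf{d}})_m=(\V_{\omega_k})^{\mathbf{d}}_{\leq m}/(\V_{\omega_k})^{\mathbf{d}}_{<m}$ is, for $J$ ranging over the admissible sets with $m_J=m$, jointly cut out by the functionals induced by $e_J^*$ (these vanish on $(\V_{\omega_k})^{\mathbf{d}}_{<m}$ by the $<m$ variant); they are linearly independent there---tested against the vectors $f^{\mathbf{s}_{[k],J]}}\cdot\nu_{\omega_k}$, again using the uniqueness in Lemma \ref{Lem:Fundk}---so they form a basis of $(\V_{\omega_k}^{\mathbf{d}})_m^*$. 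Under the dual basis, $f^{\mathbf{s}_{[k],J}}_{\mathbf{d}}\cdot\nu^{\mathbf{d}}_{\omega_k}$ is a nonzero multiple of the basis vector indexed by $J$; letting $m$ vary and using $\V_{\omega_k}^{\mathbf{d}}=\bigoplus_m(\V_{\omega_k}^{\mathbf{d}})_m$, the family $\{f^{\mathbf{s}_{[k],J}}_{\mathbf{d}}\cdot\nu^{\mathbf{d}}_{\omega_k}\}_J=\{f^{\mathbf{s}}_{\mathbf{d}}\cdot\nu^{\mathbf{d}}_{\omega_k}\mid\mathbf{s}\in\set(\omega_k)\}$ is a basis, as required.

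The main obstacle is the "$\supseteq$" half of the filtration description: this is the only place where the purely combinatorial content of Lemma \ref{Lem:Fundk} (unique minimizers of $\mathfrak{d}^{\mathbf{d}}$ on the sets $M^J_{[k]}$) has to be converted into linear-algebraic information about $\V_{\omega_k}^{\mathbf{d}}$, and the bookkeeping is made slightly more delicate by the non-minuscule feature of the problem---$\V_{\omega_k}$ sits only as a subrepresentation of $\bigwedge^k\mathbb{C}^{2n}$---so one must work throughout with the restricted functionals $e_J^*|_{\V_{\omega_k}}$ rather than with the vectors $e_J$ themselves, keeping in mind that these restrictions, over admissible $J$, do form a basis of $\V_{\omega_k}^*$.
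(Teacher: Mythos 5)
Your proof is correct, and it runs on the same engine as the paper's --- the uniqueness of minimizers of $\mathfrak{d}^{\mathbf{d}}$ on the sets $M_{[k]}^J$ from Lemma \ref{Lem:Fundk} --- but you convert that combinatorial fact into linear algebra by a genuinely different route. The paper only proves spanning (every $f^{\mathbf{s}}_{\mathbf{d}}\cdot\nu^{\mathbf{d}}_{\omega_k}$ with $\mathbf{s}\notin\set(\omega_k)$ vanishes) and then concludes via the count $|\set(\omega_k)|=\dim\V_{\omega_k}=\dim\V_{\omega_k}^{\mathbf{d}}$ from Theorem \ref{thm:FFLV-degenerate}\,(2); it therefore never needs injectivity of $J\mapsto\mathbf{s}_{[k],J}$, nor any statement about the functionals $e_J^*|_{\V_{\omega_k}}$. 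You instead describe each filtration subspace $(\V_{\omega_k})^{\mathbf{d}}_{\leq m}$ as the common kernel of the $e_J^*$ with $m_J>m$ and deduce linear independence directly from the resulting diagonal pairing. What this buys you is a proof that does not lean on the dimension count and, more to the point, makes rigorous the step the paper compresses into one sentence (``this implies $f^{\mathbf{s}}_{\mathbf{d}}\cdot\nu^{\mathbf{d}}_{\omega_k}=0$''): seeing that a non-minimizing $\mathbf{s}$ produces a class in $(\V_{\omega_k})^{\mathbf{d}}_{<\mathbf{d}\cdot\mathbf{s}}$ really does require a triangular rewriting of the kind your descending induction supplies. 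The price is two extra inputs which you correctly flag but should attribute: the injectivity of $J\mapsto\mathbf{s}_{[k],J}$ on the admissible columns (in the paper this is only established later, inside the proof of Lemma \ref{Lem:TabBij}), and the fact that the restrictions $e_J^*|_{\V_{\omega_k}}$ over admissible $J$ form a basis of $\V_{\omega_k}^*$, which is Theorem \ref{Thm:SYST} for $\lambda=\omega_k$ (equivalently De Concini's symplectic standard monomial theory). Both are proved independently of this proposition, so there is no circularity --- your argument is just heavier than the paper's spanning-plus-counting shortcut.
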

\begin{proof}
    The set of lattice points $\set(\omega_k)$ in the FFLV polytope for a fundamental weight $\omega_k$ is described in \cite[Lemma 4.1]{FFL11b}. It follows that for any $J\subseteq\{1,\ldots,n,\overline{n},\ldots,\overline{1}\}$ of cardinality $k$, the element $\mathbf{s}_{[k],J}$ is contained in $\set(\omega_k)$. Since $\V_{\omega_k}$ is a subrepresentation of ${\bigwedge}^k\mathbb{C}^{2n}$, it follows
$$\set(\omega_k)=\{\mathbf{s}_{[k],J}\mid J\subseteq\{1,\ldots,n,\overline{n},\ldots,\overline{1}\}\text{ with }|J|=k\}.$$

Let $\mathbf{s}\in \mathbb{N}^{\Phi^+}$ with $f^{\mathbf{s}}\cdot \nu_{\omega_k}\neq 0$. We choose $J\subseteq\{1,\ldots,n,\overline{n},\ldots,\overline{1}\}$ such that  $e_J^*(f^{\mathbf{s}}\cdot \nu_{\omega_k})\neq 0$ when considered as an element of ${\bigwedge}^k\mathbb{C}^{2n}$. If $\mathbf{s}\neq \mathbf{s}_{[k],J}$, by Lemma \ref{Lem:Fundk}, $\mathfrak{d}^{\mathbf{d}}:M_{[k]}^J\to\mathbb{R}$ does not attain its minimum at $\mathbf{s}$. This implies $f_{\mathbf{d}}^{\mathbf{s}}\cdot \nu_{\omega_k}^{\mathbf{d}}=0$, and therefore the set $\{f_{\mathbf{d}}^{\mathbf{s}}\cdot\nu_{\omega_k}^{\mathbf{d}}\mid \mathbf{s}\in \set(\omega_k)\}$ spans $\V_{\omega_k}^{\mathbf{d}}$. By Theorem \ref{thm:FFLV-degenerate} (2), $|\set(\omega_k)|=\dim\V_{\omega_k}$, hence the above set forms a basis of $\V_{\omega_k}^{\mathbf{d}}$.
\end{proof}

\subsubsection{From interior to boundary}\label{Sec:Boundary}
We now prove Theorem \ref{Thm:main1} for $\mathbf{d}\in\mathcal{K}_{2n}$ and $\lambda=\omega_k$.

\begin{proposition}
    For any $\mathbf{d}\in  \mathcal{K}_{2n}$, the set $\{f_{\mathbf{d}}^{\mathbf{s}}\cdot\nu_{\omega_k}^{\mathbf{d}}\mid \mathbf{s}\in \set(\omega_k)\}$ forms a basis for $\V_{\omega_k}^{\mathbf{d}}$.
\end{proposition}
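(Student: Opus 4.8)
The plan is to deduce the boundary case from the interior case already established, by a semicontinuity argument on dimensions of graded pieces. First I would fix $\mathbf{d}\in\mathcal{K}_{2n}$ lying in a proper face $F$ and pick a point $\mathbf{e}\in\mathrm{relint}(\mathcal{K}_{2n})$; then for $t\in(0,1]$ the convex combination $\mathbf{d}_t:=(1-t)\mathbf{d}+t\mathbf{e}$ lies in $\mathrm{relint}(\mathcal{K}_{2n})$, so by the previous proposition $\{f^{\mathbf{s}}_{\mathbf{d}_t}\cdot\nu^{\mathbf{d}_t}_{\omega_k}\mid\mathbf{s}\in\set(\omega_k)\}$ is a basis of $\V^{\mathbf{d}_t}_{\omega_k}$. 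The key observation is that the spanning half of Theorem \ref{Thm:main1} is the easy and robust half: since $\V_{\omega_k}=U(\n_-)\cdot\nu_{\omega_k}$ and the monomials $f^{\mathbf{s}}$ with $\mathbf{s}\in\set(\omega_k)$ already span $\V_{\omega_k}$ by Theorem \ref{thm:FFLV-degenerate}(2), their classes $f^{\mathbf{s}}_{\mathbf{d}}\cdot\nu^{\mathbf{d}}_{\omega_k}$ always span $\V^{\mathbf{d}}_{\omega_k}$ for every $\mathbf{d}\in\mathcal{K}_{2n}$ — this holds verbatim as in the interior proof, using only that $\V^{\mathbf{d}}_{\omega_k}$ is a quotient of the associated graded and has the same total dimension $\dim\V_{\omega_k}=|\set(\omega_k)|$. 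So what must be ruled out is a ``collision'': two distinct $\mathbf{s},\mathbf{s}'\in\set(\omega_k)$ whose classes become linearly dependent in $\V^{\mathbf{d}}_{\omega_k}$, equivalently the drop of rank of the $|\set(\omega_k)|\times\dim\V_{\omega_k}$ coefficient matrix when passing to the $\mathbf{d}$-associated graded.

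The cleanest way to organize this is through the explicit combinatorial description $\set(\omega_k)=\{\mathbf{s}_{[k],J}\}$ and the functions $M^J_{[k]}$ of Lemma \ref{Lem:Fundk}. For each $J$ of size $k$ the class $f^{\mathbf{s}}_{\mathbf{d}}\cdot\nu^{\mathbf{d}}_{\omega_k}$ is nonzero in $\V^{\mathbf{d}}_{\omega_k}$ precisely when $\mathbf{s}$ minimizes $\mathfrak{d}^{\mathbf{d}}$ on some $M^J_{[k]}$, and moreover $\V^{\mathbf{d}}_{\omega_k}$ is graded by the $\symplectic_{2n}$-weight together with the $\mathbf{d}$-degree; the basis elements indexed by distinct $J$'s lie in distinct weight spaces (each of dimension one in $\V_{\omega_k}$), so linear independence is purely a question of whether each $\mathbf{s}_{[k],J}$ remains the unique minimizer of $\mathfrak{d}^{\mathbf{d}}$ on $M^J_{[k]}$. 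Thus I would rerun the proof of Lemma \ref{Lem:Fundk} but tracking which inequalities of Proposition \ref{Prop:generalinequalities} are used: by Remark \ref{Rem:Strict} each such inequality is a nonnegative sum of defining facet inequalities of $\mathcal{K}_{2n}$, hence it holds (non-strictly) for every $\mathbf{d}\in\mathcal{K}_{2n}$. The argument in Lemma \ref{Lem:Fundk} shows $\mathbf{s}_\sigma\mapsto\mathfrak{d}^{\mathbf{d}}(\mathbf{s}_\sigma)$ is weakly Bruhat-decreasing for general $\mathbf{d}\in\mathcal{K}_{2n}$, so $\mathbf{s}_{[k],J}=\mathbf{s}_{w_0}$ still achieves the minimum on $M^J_{[k]}$ — it just need not be the \emph{unique} minimizer on the boundary. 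Uniqueness, however, is not needed: it suffices that $\mathbf{s}_{[k],J}$ is \emph{a} minimizer, so that $f^{\mathbf{s}_{[k],J}}_{\mathbf{d}}\cdot\nu^{\mathbf{d}}_{\omega_k}\neq 0$; since these nonzero classes sit in pairwise distinct $\symplectic_{2n}$-weight spaces of $\V^{\mathbf{d}}_{\omega_k}$, they are automatically linearly independent, and being $|\set(\omega_k)|=\dim\V^{\mathbf{d}}_{\omega_k}$ in number they form a basis.

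I expect the only real obstacle to be bookkeeping: one must confirm that for each $J$ the minimization over $M^J_{[k]}$ still forces $f^{\mathbf{s}}$ to be a product of the ``short'' monomials $f^{\mathbf{s}_{i,j}}$ (so that the reduction to the symmetric-group picture $\{\mathbf{s}_\sigma\}$ goes through) using only weak inequalities. Here the argument must be phrased so as not to claim that longer monomials give a \emph{strictly} larger value — they give a value that is $\geq$ that of $\mathbf{s}_{w_0}$, which is all that is required. Concretely I would argue: take any $\mathbf{s}\in M^J_{[k]}$; expanding the action on $\bigwedge^k\mathbb{C}^{2n}$ and using Lemma \ref{Lem:Fund1} (whose proof, when run with weak inequalities, still yields $\mathfrak{d}^{\mathbf{d}}(\mathbf{s})\geq s_{i,j}$ for any $\mathbf{s}\in M^j_i$) one gets $\mathfrak{d}^{\mathbf{d}}(\mathbf{s})\geq\min_\sigma s_\sigma = s_{w_0} = \mathfrak{d}^{\mathbf{d}}(\mathbf{s}_{[k],J})$; hence $\mathbf{s}_{[k],J}$ is a minimizer and its class is nonzero. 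Combining over all $J$ and invoking the weight-space decomposition finishes the proof. An alternative, if one prefers to avoid re-examining the combinatorics, is a pure semicontinuity argument: $\dim(\V^{\mathbf{d}}_{\omega_k})_m$ is upper semicontinuous in $\mathbf{d}$ for the appropriate grading, the spanning set shows $\{f^{\mathbf{s}}_{\mathbf{d}}\cdot\nu^{\mathbf{d}}_{\omega_k}\}$ hits every graded piece, and total dimension is constant, forcing linear independence at $\mathbf{d}$ too; but the weight-space observation above makes even this unnecessary.
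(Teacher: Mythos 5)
Your core technical step --- rerunning Lemmas \ref{Lem:Fund1} and \ref{Lem:Fundk} with the weak versions of the inequalities in Proposition \ref{Prop:generalinequalities} to conclude that $\mathbf{s}_{[k],J}$ is still \emph{a} (possibly non-unique) minimizer of $\mathfrak{d}^{\mathbf{d}}$ on $M_{[k]}^J$ --- is exactly what the paper does. But both of the ways you convert this into the basis statement have gaps. The linear-independence argument via weight spaces fails: the fundamental representations of $\symplectic_{2n}$ are not minuscule, their weight spaces are not all one-dimensional, and distinct elements of $\set(\omega_k)$ can produce classes of the same weight. Concretely, for $\symplectic_6$ and $k=2$ the zero weight space of $\V_{\omega_2}$ is two-dimensional, and the two lattice points $\mathbf{s},\mathbf{s}'\in\set(\omega_2)$ with supports $\{\alpha_{1,\ov{2}}\}$ and $\{\alpha_{1,\ov{3}},\alpha_{2,2}\}$ (coming from $J=\{1,\ov{1}\}$ and $J=\{3,\ov{3}\}$) both satisfy $\sum_\beta s_\beta\beta=\epsilon_1+\epsilon_2$; moreover, summing $(\mathrm{C}_{1,2})$, $(\mathrm{D}_2)$ and $(\mathrm{A}_2)$ gives $d_{1,\ov{3}}+d_{2,2}\geq d_{1,\ov{2}}$, so on the face where these three facet inequalities are equalities one also has $\mathfrak{d}^{\mathbf{d}}(\mathbf{s})=\mathfrak{d}^{\mathbf{d}}(\mathbf{s}')$, and the two nonzero classes then sit in the same weight-and-degree graded piece with nothing forcing their independence. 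This is precisely the non-minuscule difficulty the paper flags in its introduction.

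Your claimed shortcut for spanning is also not a valid principle: a spanning set of a filtered space need not induce a spanning set of the associated graded (take $V=\mathbb{C}^2$ with $V_{\leq 0}=\mathbb{C}v_1$, $V_{\leq 1}=V$ and the spanning set $\{v_1+v_2,\,v_2\}$, whose classes both equal $\ov{v_2}$ in degree $1$). The interior proof of spanning is not ``verbatim robust'': it uses the minimality statement of Lemma \ref{Lem:Fundk} to kill the classes of monomials outside $\set(\omega_k)$, and on the boundary this is exactly where the remaining work lies. The paper's route is the efficient one: weak minimality of $\mathbf{s}_{[k],J}$ still yields the spanning statement, and then $|\set(\omega_k)|=\dim\V_{\omega_k}=\dim\V_{\omega_k}^{\mathbf{d}}$ upgrades spanning to a basis, so no independence argument (and no interpolation $\mathbf{d}_t$) is needed. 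If you want to keep your structure, you must replace the weight-space step by an honest proof that the classes indexed by $\set(\omega_k)$ span $\V_{\omega_k}^{\mathbf{d}}$ for boundary $\mathbf{d}$; your concluding semicontinuity variant depends on that same unproved spanning claim.
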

\begin{proof}
When $\mathbf{d}$ is chosen from the boundary of the cone $\mathcal{K}_{2n}$, in the proofs of Lemma \ref{Lem:Fund1} and Lemma \ref{Lem:Fundk} above, the strict inequalities used therein from Proposition \ref{Prop:generalinequalities} are not necessarily strict, and therefore the proofs imply that the function $\mathfrak{d}^{\mathbf{d}}$ attains one of its minimum at $\mathbf{s}_{I,J}$ (such a minimum is not necessarily unique). The set $\{f_{\mathbf{d}}^{\mathbf{s}}\cdot\nu_{\omega_k}^{\mathbf{d}}\mid \mathbf{s}\in \set(\omega_k)\}$ still spans $\V_{\omega_k}^{\mathbf{d}}$, hence they form a basis by Theorem \ref{thm:FFLV-degenerate} (2).
\end{proof}

\subsection{Proof of Theorem \ref{Thm:main1}}

Let $\mathbf{d}\in\mathcal{K}_{2n}$. We proceed the proof by induction on the height $\mathrm{ht}(\lambda)=\sum_{k=1}^{n}m_k$ of the weight $\lambda=\sum_{k=1}^{n}m_k\omega_k$. When $\mathrm{ht}(\lambda)=1$, $\lambda$ is a fundamental weight: such cases are settled in Section \ref{Subsec:Fund}. For the inductive step we need to describe the Cartan component  of the $U(\mathfrak{n}_-^{\mathbf{d}})$-module $\V_\lambda^{\mathbf{d}}\otimes\V_\mu^{\mathbf{d}}$. We start from the following lemma:

\begin{lemma}\label{Lem:indep}
The set $\{f^{\mathbf{s}}_{\mathbf{d}}\cdot (\nu_\lambda^{\mathbf{d}}\otimes\nu_\mu^{\mathbf{d}})\mid \mathbf{s}\in\set(\lambda+\mu)\}$ is linearly independent in $\V_\lambda^{\mathbf{d}}\otimes\V_\mu^{\mathbf{d}}$.
\end{lemma}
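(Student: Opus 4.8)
The plan is to deduce linear independence in $\V_\lambda^{\mathbf{d}}\otimes\V_\mu^{\mathbf{d}}$ from the standard decomposition $\set(\lambda+\mu)=\set(\lambda)+\set(\mu)$ of Lemma \ref{Lem:StdDec} and the already-established basis results in the two tensor factors (Theorem \ref{Thm:main1} for fundamental weights, but really we want the inductive hypothesis: $\{f^{\mathbf{s}}_{\mathbf{d}}\cdot\nu^{\mathbf{d}}_\lambda\mid \mathbf{s}\in\set(\lambda)\}$ is a basis of $\V_\lambda^{\mathbf{d}}$, and likewise for $\mu$). First I would expand $f^{\mathbf{s}}_{\mathbf{d}}\cdot(\nu_\lambda^{\mathbf{d}}\otimes\nu_\mu^{\mathbf{d}})$ using the coproduct on $U(\n_-^{\mathbf{d}})$. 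The subtlety is that the relevant filtered/graded structure must be tracked: the tensor product $\V_\lambda^{\mathbf{d}}\otimes\V_\mu^{\mathbf{d}}$ carries the tensor-product grading, and $f^{\mathbf{s}}$ is homogeneous of degree $\mathbf{d}\cdot\mathbf{s}$ in $U(\n_-^{\mathbf{d}})$, so all terms in the coproduct expansion have total degree $\mathbf{d}\cdot\mathbf{s}$. I would isolate, among the terms $f^{\mathbf{s}_1}_{\mathbf{d}}\cdot\nu_\lambda^{\mathbf{d}}\otimes f^{\mathbf{s}_2}_{\mathbf{d}}\cdot\nu_\mu^{\mathbf{d}}$ with $\mathbf{s}_1+\mathbf{s}_2=\mathbf{s}$, the contribution coming from the \emph{standard} decomposition $\mathbf{s}=\mathbf{s}_1(\mathbf{s})+\mathbf{s}_2(\mathbf{s})$ of Lemma \ref{Lem:StdDec}, where $\mathbf{s}_1(\mathbf{s})\in\set(\lambda)$ and $\mathbf{s}_2(\mathbf{s})\in\set(\mu)$.

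The key combinatorial step is to put a total order on $\set(\lambda+\mu)$ refining an appropriate partial order so that the matrix expressing $\{f^{\mathbf{s}}_{\mathbf{d}}\cdot(\nu_\lambda^{\mathbf{d}}\otimes\nu_\mu^{\mathbf{d}})\}_{\mathbf{s}\in\set(\lambda+\mu)}$ in terms of the (linearly independent, by induction) vectors $\{f^{\mathbf{s}_1}_{\mathbf{d}}\cdot\nu_\lambda^{\mathbf{d}}\otimes f^{\mathbf{s}_2}_{\mathbf{d}}\cdot\nu_\mu^{\mathbf{d}}\mid \mathbf{s}_1\in\set(\lambda),\ \mathbf{s}_2\in\set(\mu)\}$ is ``triangular''. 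Concretely, I expect the right order to be a lexicographic/homogeneous order on the exponent vectors that makes the standard-decomposition term $f^{\mathbf{s}_1(\mathbf{s})}_{\mathbf{d}}\cdot\nu_\lambda^{\mathbf{d}}\otimes f^{\mathbf{s}_2(\mathbf{s})}_{\mathbf{d}}\cdot\nu_\mu^{\mathbf{d}}$ the ``leading term'' of $f^{\mathbf{s}}_{\mathbf{d}}\cdot(\nu_\lambda^{\mathbf{d}}\otimes\nu_\mu^{\mathbf{d}})$: all other terms $\mathbf{s}_1+\mathbf{s}_2=\mathbf{s}$ with $(\mathbf{s}_1,\mathbf{s}_2)\neq(\mathbf{s}_1(\mathbf{s}),\mathbf{s}_2(\mathbf{s}))$ either vanish after passing to $\V_\lambda^{\mathbf{d}}\otimes\V_\mu^{\mathbf{d}}$ (because $\mathbf{s}_1\notin\set(\lambda)$ forces $f^{\mathbf{s}_1}_{\mathbf{d}}\cdot\nu_\lambda^{\mathbf{d}}=0$ by the basis result, likewise for $\mathbf{s}_2$) or are ``lower'' in the chosen order. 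This is exactly the mechanism used in \cite{FFL11b} and in \cite{FFFM19}; the role of the FFLV basis being a basis in each factor, combined with the combinatorial fact that the standard decomposition is distinguished, does the bookkeeping. Distinctness of leading terms across different $\mathbf{s}\in\set(\lambda+\mu)$ is automatic since $\mathbf{s}_1(\mathbf{s})+\mathbf{s}_2(\mathbf{s})=\mathbf{s}$ recovers $\mathbf{s}$.

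The main obstacle I anticipate is verifying that the non-standard terms are genuinely negligible, i.e.\ that whenever $\mathbf{s}_1+\mathbf{s}_2=\mathbf{s}$ with both $\mathbf{s}_1\in\set(\lambda)$ and $\mathbf{s}_2\in\set(\mu)$ but $(\mathbf{s}_1,\mathbf{s}_2)\neq(\mathbf{s}_1(\mathbf{s}),\mathbf{s}_2(\mathbf{s}))$, the vector $f^{\mathbf{s}_1}_{\mathbf{d}}\cdot\nu_\lambda^{\mathbf{d}}\otimes f^{\mathbf{s}_2}_{\mathbf{d}}\cdot\nu_\mu^{\mathbf{d}}$ is strictly lower than the standard one in the chosen order — this requires a careful inspection of the poset structure on $\Phi^+$ via symplectic Dyck paths and the combinatorics of marked chain polytopes used in Lemma \ref{Lem:StdDec}. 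One should check that the standard decomposition picks out the ``maximal'' splitting in precisely the sense dual to the chosen monomial order, mirroring \cite[proof of the PBW basis theorem]{FFL11b}. Once that triangularity is in place, linear independence follows immediately: a vanishing linear combination, looked at through its $\prec$-maximal index $\mathbf{s}$, forces the corresponding coefficient to be zero by the linear independence of the leading terms in $\V_\lambda^{\mathbf{d}}\otimes\V_\mu^{\mathbf{d}}$, and one finishes by downward induction on the order.
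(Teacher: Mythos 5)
Your overall strategy (induction on the height, expansion in the tensor basis of $\V_\lambda^{\mathbf{d}}\otimes\V_\mu^{\mathbf{d}}$, and an extremality argument) is the right family of argument, but the proposal has a genuine gap at its central step, and one of its auxiliary claims is false in the generality needed. First, you assert that $\mathbf{s}_1\notin\set(\lambda)$ forces $f^{\mathbf{s}_1}_{\mathbf{d}}\cdot\nu_\lambda^{\mathbf{d}}=0$ ``by the basis result''. A spanning/basis statement does not give this vanishing; it is the content of Corollary \ref{Cor:Monomial}, which holds only for $\mathbf{d}$ in the \emph{interior} of $\mathcal{K}_{2n}$ and is proved \emph{after} (and using) the present lemma. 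The lemma is claimed for all $\mathbf{d}\in\mathcal{K}_{2n}$, and on the boundary such vectors need not vanish — they re-expand into basis vectors $f^{\mathbf{p}}_{\mathbf{d}}\cdot\nu_\lambda^{\mathbf{d}}$ with $\mathbf{p}\neq\mathbf{s}_1$, which is precisely why the bookkeeping is delicate. Second, your triangularity mechanism hinges on the standard decomposition of Lemma \ref{Lem:StdDec} producing the $\prec$-leading term, and you explicitly leave this unverified (``one should check\ldots''). There is no reason to expect it: the standard decomposition is defined purely combinatorially (maximal anti-chains in the support), with no reference to $\mathbf{d}$, so for a general $\mathbf{d}\in\mathcal{K}_{2n}$ it need not extremize $\mathfrak{d}^{\mathbf{d}}$ among the splittings $\mathbf{s}=\mathbf{s}_1+\mathbf{s}_2$; you would also still need to show its coefficient survives the re-expansion without cancellation.

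The paper avoids both difficulties with a double-extremality argument taken from \cite{FFR16}: order $\mathbb{N}^{\Phi^+}$ by the value of $\mathfrak{d}^{\mathbf{d}}$ (linearized), take the $\succ^{\mathbf{d}}$-\emph{maximal} $\mathbf{s}$ with $c_{\mathbf{s}}\neq 0$ in a putative vanishing combination, expand $f^{\mathbf{s}}_{\mathbf{d}}\cdot(\nu_\lambda^{\mathbf{d}}\otimes\nu_\mu^{\mathbf{d}})$ over the tensor basis, and then take the $\succ^{\mathbf{d}}$-\emph{minimal} $\mathbf{t}_0$ appearing with nonzero coefficient $\gamma_{\mathbf{t}_0}$ in that expansion. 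Nonvanishing of the selected coefficient is built into the choice of $\mathbf{t}_0$, and the max/min combination ensures the basis vector $f^{\mathbf{s}-\mathbf{t}_0}_{\mathbf{d}}\cdot\nu_\lambda^{\mathbf{d}}\otimes f^{\mathbf{t}_0}_{\mathbf{d}}\cdot\nu_\mu^{\mathbf{d}}$ occurs exactly once in the whole sum, giving the contradiction. No distinguished decomposition of $\mathbf{s}$, and in particular no appeal to Lemma \ref{Lem:StdDec}, is needed. To repair your write-up you should either switch to this max/min scheme or supply a proof that the decomposition you designate as ``leading'' really is $\prec$-extremal with nonzero coefficient for every $\mathbf{d}\in\mathcal{K}_{2n}$, boundary points included.
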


\begin{proof}
The proof is the same as \cite[Proposition 4]{FFR16}, we sketch it for the convenience of the reader.

Assume that there exists a non-trivial linear combination 
\begin{equation}\label{Eq:Combo}
\sum_{\mathbf{s}\in\set(\lambda+\mu)} c_{\mathbf{s}}f_{\mathbf{d}}^{\mathbf{s}}\cdot(\nu_\lambda^{\mathbf{d}}\otimes \nu_\mu^{\mathbf{d}})=0.
\end{equation}
Let $\succ^{\mathbf{d}}$ be the partial order on $\mathbb{N}^{\Phi^+}$ defined by: $\mathbf{s}\succ^{\mathbf{d}}\mathbf{t}$ if and only if $\mathfrak{d}^{\mathbf{d}}(\mathbf{s})>\mathfrak{d}^{\mathbf{d}}(\mathbf{t})$. We fix a linearization of this partial order, which is also denoted by $\succ^{\mathbf{d}}$.

Let $\mathbf{s}:=\max\{\mathbf{t}\in\set(\lambda+\mu)\mid c_{\mathbf{t}}\neq 0\}$ where the maximum is taken with respect to $\succ^{\mathbf{d}}$. By induction hypothesis we can write
\begin{equation}\label{Eq:tensor}
f_{\mathbf{d}}^{\mathbf{s}}\cdot(\nu_\lambda^{\mathbf{d}}\otimes \nu_\mu^{\mathbf{d}})=\sum_{\mathbf{t}\in\set(\mu),\mathbf{s}-\mathbf{t}\in\set(\lambda)}\gamma_{\mathbf{t}}f_{\mathbf{d}}^{\mathbf{s}-\mathbf{t}}\cdot\nu_\lambda^{\mathbf{d}}\otimes f_{\mathbf{d}}^{\mathbf{t}} \cdot\nu_\mu^{\mathbf{d}}
\end{equation}
with $\gamma_{\mathbf{t}}\in\mathbb{C}$. We set $\mathbf{t}_0:=\min\{\mathbf{t}\in\set(\mu)\mid\gamma_{\mathbf{t}}\neq 0\}$ where the minimum is taken with respect to $\succ^{\mathbf{d}}$. In \eqref{Eq:tensor} there exists a term $\gamma_{\mathbf{t}_0}f_{\mathbf{d}}^{\mathbf{s}-\mathbf{t}_0}\cdot\nu_\lambda^{\mathbf{d}}\otimes f_{\mathbf{d}}^{\mathbf{t}_0} \cdot\nu_\mu^{\mathbf{d}}$ with $\gamma_{\mathbf{t}_0}\neq 0$. By the maximality of $\mathbf{s}$ and minimality of $\mathbf{t}_0$, such a term appears only once in the linear combination \eqref{Eq:Combo}, contradicting to the assumption that $\gamma_{\mathbf{t}_0}\neq 0$.

As a consequence of this contradiction, there exists no such a maximal $\mathbf{s}$, implying that all the coefficients in \eqref{Eq:Combo} are zero.
\end{proof}

\begin{proposition}\label{Prop:Cartan}
For $\mathbf{d}\in\mathcal{K}_{2n}$ and $\lambda,\mu\in\Lambda^+$, the Cartan component $U(\mathfrak{n}_-^{\mathbf{d}})\cdot (\nu_\lambda^{\mathbf{d}}\otimes \nu_\mu^{\mathbf{d}})$ of $\V_\lambda^{\mathbf{d}}\otimes\V_\mu^{\mathbf{d}}$ is isomorphic to $\V_{\lambda+\mu}^{\mathbf{d}}$ as $U(\mathfrak{n}_-^{\mathbf{d}})$-modules.
\end{proposition}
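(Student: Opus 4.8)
The plan is to establish the isomorphism by a \emph{surjection-plus-dimension-count} argument. First I would construct a surjective homomorphism of $U(\n_-^{\mathbf{d}})$-modules
$$\psi\colon \V_{\lambda+\mu}^{\mathbf{d}}\twoheadrightarrow U(\n_-^{\mathbf{d}})\cdot(\nu_\lambda^{\mathbf{d}}\otimes\nu_\mu^{\mathbf{d}}),$$
and then argue that it is injective: on the one hand $\dim\V_{\lambda+\mu}^{\mathbf{d}}=\dim\V_{\lambda+\mu}=|\set(\lambda+\mu)|$ by Theorem~\ref{thm:FFLV-degenerate}, and on the other hand Lemma~\ref{Lem:indep} already exhibits $|\set(\lambda+\mu)|$ linearly independent vectors inside the Cartan component, so the surjection is forced to be an isomorphism.

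To produce $\psi$, I would start from the classical Cartan embedding $\iota\colon\V_{\lambda+\mu}\hookrightarrow\V_\lambda\otimes\V_\mu$ of $\mathfrak{sp}_{2n}$-modules, $\nu_{\lambda+\mu}\mapsto\nu_\lambda\otimes\nu_\mu$, which in particular is a map of $U(\n_-)$-modules, and endow $\V_\lambda\otimes\V_\mu$ with the tensor-product $\mathbf{d}$-filtration $(\V_\lambda\otimes\V_\mu)_{\le m}^{\mathbf{d}}:=\sum_{k+l\le m}(\V_\lambda)_{\le k}^{\mathbf{d}}\otimes(\V_\mu)_{\le l}^{\mathbf{d}}$. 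Since $\Delta(f_\beta)=f_\beta\otimes 1+1\otimes f_\beta$ has $\mathbf{d}$-degree $d_\beta$ for every $\beta\in\Phi^+$, and since the subspaces $U(\n_-)^{\mathbf{d}}_{\le m}$ form an algebra filtration, the coproduct of $U(\n_-)$ respects the $\mathbf{d}$-filtration; hence $\iota$ is a filtered map. Passing to associated graded gives a $U(\n_-^{\mathbf{d}})$-module map $\mathrm{gr}(\iota)\colon\V_{\lambda+\mu}^{\mathbf{d}}\to\mathrm{gr}(\V_\lambda\otimes\V_\mu)$. The canonical graded surjection $\V_\lambda^{\mathbf{d}}\otimes\V_\mu^{\mathbf{d}}\to\mathrm{gr}(\V_\lambda\otimes\V_\mu)$ is a $U(\n_-^{\mathbf{d}})$-module map, and it is an isomorphism because both sides have dimension $\dim\V_\lambda\cdot\dim\V_\mu$; composing $\mathrm{gr}(\iota)$ with its inverse produces $\psi\colon\V_{\lambda+\mu}^{\mathbf{d}}\to\V_\lambda^{\mathbf{d}}\otimes\V_\mu^{\mathbf{d}}$. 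As $\iota$ sends $\nu_{\lambda+\mu}$ to $\nu_\lambda\otimes\nu_\mu$ and both vectors lie in filtration degree $0$, one gets $\psi(\nu_{\lambda+\mu}^{\mathbf{d}})=\nu_\lambda^{\mathbf{d}}\otimes\nu_\mu^{\mathbf{d}}$; since $\nu_{\lambda+\mu}^{\mathbf{d}}$ generates $\V_{\lambda+\mu}^{\mathbf{d}}$ over $U(\n_-^{\mathbf{d}})$, the image of $\psi$ is exactly $U(\n_-^{\mathbf{d}})\cdot(\nu_\lambda^{\mathbf{d}}\otimes\nu_\mu^{\mathbf{d}})$, the Cartan component.

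It then remains to compare dimensions. We have $\dim\V_{\lambda+\mu}^{\mathbf{d}}=|\set(\lambda+\mu)|$, whereas by Lemma~\ref{Lem:indep} the vectors $f^{\mathbf{s}}_{\mathbf{d}}\cdot(\nu_\lambda^{\mathbf{d}}\otimes\nu_\mu^{\mathbf{d}})$ with $\mathbf{s}\in\set(\lambda+\mu)$ are linearly independent in $U(\n_-^{\mathbf{d}})\cdot(\nu_\lambda^{\mathbf{d}}\otimes\nu_\mu^{\mathbf{d}})$, so the latter has dimension at least $\dim\V_{\lambda+\mu}^{\mathbf{d}}$. Combined with surjectivity of $\psi$, this forces $\psi$ to be an isomorphism of $U(\n_-^{\mathbf{d}})$-modules.

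The step I expect to be the main obstacle is the middle one: verifying carefully that the coproduct of $U(\n_-)$ is compatible with the $\mathbf{d}$-filtration (this is exactly where the inequalities defining $\mathcal{K}_{2n}$, through the properties $(\mathrm{A})$, $(\mathrm{E})$, $(\mathrm{F})$ of Proposition~\ref{Prop:generalinequalities} and Proposition~\ref{Prop:Isod}, re-enter), and that the identification $\V_\lambda^{\mathbf{d}}\otimes\V_\mu^{\mathbf{d}}\cong\mathrm{gr}(\V_\lambda\otimes\V_\mu)$ intertwines the two $U(\n_-^{\mathbf{d}})$-actions, so that $\psi$ is genuinely a module map. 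Once these routine compatibilities are in place, the remainder is the dimension count above.
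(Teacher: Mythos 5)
Your argument is correct and is essentially the paper's proof: both establish a surjection $\V_{\lambda+\mu}^{\mathbf{d}}\twoheadrightarrow U(\n_-^{\mathbf{d}})\cdot(\nu_\lambda^{\mathbf{d}}\otimes\nu_\mu^{\mathbf{d}})$ using the classical Cartan embedding together with the identification of $\V_\lambda^{\mathbf{d}}\otimes\V_\mu^{\mathbf{d}}$ with the associated graded of the tensor-product filtration, and then invoke Lemma \ref{Lem:indep} plus Theorem \ref{thm:FFLV-degenerate}(2) for the dimension count. The only difference is presentational: the paper verifies well-definedness by an explicit element-chase (lifting $f$ to $F\in U(\n_-)$ with $F\cdot\nu_{\lambda+\mu}=0$), whereas you package the same compatibility functorially via $\mathrm{gr}(\iota)$.
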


\begin{proof}
We denote by $W$ to be the Cartan component and consider the $U(\mathfrak{n}_-^{\mathbf{d}})$-module morphism $\V_{\lambda+\mu}^{\mathbf{d}}\to W$ determined by $\nu_{\lambda+\mu}^{\mathbf{d}}\mapsto\nu_\lambda^{\mathbf{d}}\otimes \nu_\mu^{\mathbf{d}}$. 

We first show that this morphism is surjective, hence $\dim W\leq\dim \V_{\lambda+\mu}$. It suffices to prove that if $f\in U(\mathfrak{n}_-^{\mathbf{d}})$ such that $f\cdot \nu_{\lambda+\mu}^{\mathbf{d}}=0$, then $f\cdot (\nu_\lambda^{\mathbf{d}}\otimes \nu_\mu^{\mathbf{d}})=0$. Assume that $f\in U(\mathfrak{n}_-)^{\mathbf{d}}_k$, from $f\cdot \nu_{\lambda+\mu}^{\mathbf{d}}=0$, there exists $F\in U(\mathfrak{n}_-)$ such that $F\cdot v_{\lambda+\mu}=0$, and $F$ admits a decomposition $F=F_1+F_2$ such that the class of $F_1$ in $U(\mathfrak{n}_-^{\mathbf{d}})$ is $f$, and $F_2\in U(\mathfrak{n}_-)^{\mathbf{d}}_{<k}$. As $\V_{\lambda+\mu}$ is the Cartan component of $\V_\lambda\otimes\V_\mu$ as $U(\mathfrak{n}_-)$-modules, it follows $F\cdot (\nu_\lambda\otimes\nu_\mu)=0$. Note that $\V_\lambda^{\mathbf{d}}\otimes\V_\mu^{\mathbf{d}}$ is the associated graded space of the canonical filtration on $\V_\lambda\otimes\V_\mu$ whose  component of degree $\leq m$ is 
$$\bigoplus_{s+t= m} (\V_\lambda)^{\mathbf{d}}_{\leq s}\otimes (\V_\mu)^{\mathbf{d}}_{\leq t}.$$
It then follows that the class of $F\cdot (\nu_\lambda\otimes\nu_\mu)$ in $\V_\lambda^{\mathbf{d}}\otimes\V_\mu^{\mathbf{d}}$ is $f\cdot (\nu_\lambda^{\mathbf{d}}\otimes\nu_\mu^{\mathbf{d}})$, which is therefore zero.

By Theorem \ref{thm:FFLV-degenerate} (2), it remains to apply Lemma \ref{Eq:Combo} to conclude.
\end{proof}

Now we can complete the proof of Theorem \ref{Thm:main1}. By Proposition \ref{Prop:Cartan}, the set $\{f_{\mathbf{d}}^{\mathbf{s}}\cdot\nu_{\lambda+\mu}^{\mathbf{d}}\mid \mathbf{s}\in \set(\lambda+\mu)\}$ is sent to a linearly independent set in $\V_\lambda^{\mathbf{d}}\otimes\V_\mu^{\mathbf{d}}$. The theorem thus follows from Theorem \ref{thm:FFLV-degenerate} (2).

\subsection{Monomial ideal}

When $\mathbf{d}$ is taken from the interior of $\mathcal{K}_{2n}$, we have the following 

\begin{corollary}\label{Cor:Monomial}
For $\mathbf{d}$ in the interior of $\mathcal{K}_{2n}$ and $\lambda\in\Lambda^+$, if $\mathbf{s}\notin \set(\lambda)$, then $f^{\mathbf{s}}_{\mathbf{d}}\cdot \nu_{\lambda}^{\mathbf{d}}=0$. In particular, the ideal $\mathrm{I}_\lambda^{\mathbf{d}}$ is monomial.
\end{corollary}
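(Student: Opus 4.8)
The plan is to prove the displayed vanishing $f^{\mathbf{s}}_{\mathbf{d}}\cdot\nu_\lambda^{\mathbf{d}}=0$ for $\mathbf{s}\notin\set(\lambda)$ by induction on the height $\mathrm{ht}(\lambda)=\sum_k m_k$, mimicking the proof of Theorem \ref{Thm:main1}, and then to deduce that $\mathrm{I}_\lambda^{\mathbf{d}}$ is monomial by a dimension count. Throughout, since $\mathbf{d}\in\mathrm{relint}(\mathcal{K}_{2n})$, Lemma \ref{Lem:DegAlg} identifies $U(\n_-^{\mathbf{d}})$ with the symmetric (polynomial) algebra $S(\n_-)$, whose generators $f_\beta$ are primitive for the coproduct inherited from $U(\n_-)$ — this is the coproduct governing the $U(\n_-^{\mathbf{d}})$-action on the associated graded tensor product, as recalled in the proof of Proposition \ref{Prop:Cartan}. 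For the base case $\mathrm{ht}(\lambda)=1$, i.e. $\lambda=\omega_k$, the assertion is already contained in the proof of the last proposition of Section \ref{Subsec:Fund}: if $f^{\mathbf{s}}\cdot\nu_{\omega_k}=0$ there is nothing to prove, and otherwise one picks $J$ with $e_J^*(f^{\mathbf{s}}\cdot\nu_{\omega_k})\ne 0$; since $\set(\omega_k)=\{\mathbf{s}_{[k],J'}\}$ and $\mathbf{s}\notin\set(\omega_k)$, we have $\mathbf{s}\ne\mathbf{s}_{[k],J}$, so by Lemma \ref{Lem:Fundk} the linear form $\mathfrak{d}^{\mathbf{d}}$ does not attain its minimum on $M_{[k]}^J$ at $\mathbf{s}$; hence $f^{\mathbf{s}}\cdot\nu_{\omega_k}$ lies in $(\V_{\omega_k})^{\mathbf{d}}_{<\mathfrak{d}^{\mathbf{d}}(\mathbf{s})}$ and its class in the graded piece of degree $\mathfrak{d}^{\mathbf{d}}(\mathbf{s})$ is zero.

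For the inductive step I would write $\lambda=\omega_k+\mu$ with $k=\max\{\ell\mid m_\ell\ne 0\}$, so $\mathrm{ht}(\mu)=\mathrm{ht}(\lambda)-1$. Proposition \ref{Prop:Cartan} identifies $\V_\lambda^{\mathbf{d}}$ with the Cartan component $U(\n_-^{\mathbf{d}})\cdot(\nu_{\omega_k}^{\mathbf{d}}\otimes\nu_\mu^{\mathbf{d}})\subseteq\V_{\omega_k}^{\mathbf{d}}\otimes\V_\mu^{\mathbf{d}}$, carrying $f^{\mathbf{s}}_{\mathbf{d}}\cdot\nu_\lambda^{\mathbf{d}}$ to $f^{\mathbf{s}}_{\mathbf{d}}\cdot(\nu_{\omega_k}^{\mathbf{d}}\otimes\nu_\mu^{\mathbf{d}})$. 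Since the generators of $S(\n_-)$ are primitive, the binomial expansion gives
$$f^{\mathbf{s}}_{\mathbf{d}}\cdot(\nu_{\omega_k}^{\mathbf{d}}\otimes\nu_\mu^{\mathbf{d}})=\sum_{\mathbf{a}+\mathbf{b}=\mathbf{s}}\binom{\mathbf{s}}{\mathbf{a}}\,\bigl(f^{\mathbf{a}}_{\mathbf{d}}\cdot\nu_{\omega_k}^{\mathbf{d}}\bigr)\otimes\bigl(f^{\mathbf{b}}_{\mathbf{d}}\cdot\nu_\mu^{\mathbf{d}}\bigr),$$
the sum running over $\mathbf{a},\mathbf{b}\in\mathbb{N}^{\Phi^+}$ with $\binom{\mathbf{s}}{\mathbf{a}}=\prod_\beta\binom{s_\beta}{a_\beta}$. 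By the base case $f^{\mathbf{a}}_{\mathbf{d}}\cdot\nu_{\omega_k}^{\mathbf{d}}=0$ unless $\mathbf{a}\in\set(\omega_k)$, and by the induction hypothesis $f^{\mathbf{b}}_{\mathbf{d}}\cdot\nu_\mu^{\mathbf{d}}=0$ unless $\mathbf{b}\in\set(\mu)$; so only decompositions $\mathbf{s}=\mathbf{a}+\mathbf{b}$ with $\mathbf{a}\in\set(\omega_k)$ and $\mathbf{b}\in\set(\mu)$ contribute. If $\mathbf{s}\notin\set(\lambda)$, then by the Minkowski sum property $\set(\lambda)=\set(\omega_k)+\set(\mu)$ of Theorem \ref{thm:FFLV-degenerate}(1) no such decomposition exists, the right-hand side is empty, and $f^{\mathbf{s}}_{\mathbf{d}}\cdot\nu_\lambda^{\mathbf{d}}=0$.

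To finish, set $\mathrm{J}:=\langle f^{\mathbf{s}}\mid\mathbf{s}\in\mathbb{N}^{\Phi^+}\setminus\set(\lambda)\rangle\subseteq S(\n_-)$. Because $\mathrm{FFLV}(\lambda)$ is cut out by the inequalities $x_\beta\ge 0$ together with upper bounds of the shape $\sum_i x_{p(i)}\le(\cdots)$, the lattice-point set $\set(\lambda)$ is downward closed in $\mathbb{N}^{\Phi^+}$; equivalently, its complement is closed under adding elements of $\mathbb{N}^{\Phi^+}$, so $\mathrm{J}$ is the monomial ideal whose underlying space is $\mathrm{span}\{f^{\mathbf{s}}\mid\mathbf{s}\notin\set(\lambda)\}$, and $S(\n_-)/\mathrm{J}$ has as basis the images of $\{f^{\mathbf{s}}\mid\mathbf{s}\in\set(\lambda)\}$. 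The vanishing just proved gives $\mathrm{J}\subseteq\mathrm{I}_\lambda^{\mathbf{d}}$, while Theorem \ref{Thm:main1} gives $\dim_{\mathbb{C}}S(\n_-)/\mathrm{I}_\lambda^{\mathbf{d}}=\dim_{\mathbb{C}}\V_\lambda^{\mathbf{d}}=|\set(\lambda)|=\dim_{\mathbb{C}}S(\n_-)/\mathrm{J}$, whence $\mathrm{I}_\lambda^{\mathbf{d}}=\mathrm{J}$ is monomial.

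The step I expect to be the main obstacle is the inductive one: making precise that the $U(\n_-^{\mathbf{d}})=S(\n_-)$-action on the associated graded tensor product $\V_{\omega_k}^{\mathbf{d}}\otimes\V_\mu^{\mathbf{d}}$ is genuinely given by the primitive coproduct, and that the binomial expansion above is legitimate at the level of the degeneration rather than only in $U(\n_-)$ (one needs $\mathbf{d}\in\mathrm{relint}(\mathcal{K}_{2n})$ here, via Lemma \ref{Lem:DegAlg}). Once that bookkeeping is settled, everything else is an assembly of Lemma \ref{Lem:Fundk}, Proposition \ref{Prop:Cartan}, the Minkowski sum property, and a dimension count, with no cancellation issues in the inductive step since the vanishing direction only requires the index set of decompositions to be empty.
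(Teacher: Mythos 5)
Your proposal is correct and follows essentially the same route as the paper: induction on $\mathrm{ht}(\lambda)$, the base case via Lemma \ref{Lem:Fundk}, and the inductive step via the coproduct action on the tensor product together with the Minkowski sum property and Proposition \ref{Prop:Cartan} (your binomial expansion is exact already in $U(\n_-)\otimes U(\n_-)$ since the coproduct is multiplicative and each $f_\beta$ is primitive, so the bookkeeping you flag as a worry is unproblematic). Your dimension-count argument for the ``in particular'' clause, using that $\set(\lambda)$ is downward closed, is a welcome addition that the paper leaves implicit.
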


\begin{proof}
We again proceed by induction on the height $\mathrm{ht}(\lambda)$. When $\mathrm{ht}(\lambda)=1$, the corollary is a consequence of Lemma \ref{Lem:Fundk}. 

We consider the weight $\lambda+\mu$ for $\lambda,\mu\in\Lambda^+$. If $\mathbf{s}\notin\set(\lambda+\mu)$, for any decomposition $\mathbf{s}=\mathbf{s}_1+\mathbf{s}_2$ where $\mathbf{s}_1,\mathbf{s}_2\in\mathbb{N}^{\Phi^+}$, either $\mathbf{s}_1\notin\set(\lambda)$ or $\mathbf{s}_2\notin\set(\mu)$. It follows by induction that $f^{\mathbf{s}}_{\mathbf{d}}\cdot (\nu_{\lambda}^{\mathbf{d}}\otimes\nu_\mu^{\mathbf{d}})=0$ in $\V_\lambda^{\mathbf{d}}\otimes \V_\mu^{\mathbf{d}}$. By Proposition \ref{Prop:Cartan}, $\V_{\lambda+\mu}^{\mathbf{d}}$ is the Cartan component in $\V_\lambda^{\mathbf{d}}\otimes \V_\mu^{\mathbf{d}}$, hence $f^{\mathbf{s}}_{\mathbf{d}}\cdot \nu_{\lambda+\mu}^{\mathbf{d}}=0$.
\end{proof}

\section{Geometry of weighted PBW degenerations}\label{Sec:Geometry}

\subsection{Weighted degenerate symplectic flag varieties}\label{Sec:WeightedDeg}

In this section we fix $\mathbf{d}\in\mathcal{K}_{2n}$. We first introduce a geometric object associated to weighted PBW degenerate module $\V_\lambda^{\mathbf{d}}$ for $\lambda=m_1\omega_1+\ldots+m_n\omega_n\in\Lambda^+$.

Since the Lie algebra $\mathfrak{n}_-^{\mathbf{d}}$ is nilpotent, the exponential map is well-defined, and $\mathrm{N}^{\mathbf{d}}:=\exp(\mathfrak{n}_-^{\mathbf{d}})$ is a connected simply connected Lie group with Lie algebra $\mathfrak{n}_-^{\mathbf{d}}$.

\begin{definition}
For $\mathbf{d}\in\mathcal{K}_{2n}$ we define the \emph{weighted degenerate symplectic flag variety} by
$$\complete_{2n}^{\mathbf{d}}:=\overline{\mathrm{N}^{\mathbf{d}}\cdot [\nu_\lambda^{\mathbf{d}}}]\subseteq \mathbb{P}(\V_{\lambda}^{\mathbf{d}}).$$
\end{definition}

By Proposition \ref{Prop:Cartan}, as $U(\mathfrak{n}_-^{\mathbf{d}})$-modules, 
$$\V_\lambda^{\mathbf{d}}\hookrightarrow \mathrm{U}_\lambda^{\mathbf{d}}:=(\V_{\omega_1}^{\mathbf{d}})^{\otimes m_1}\otimes\ldots\otimes(\V_{\omega_n}^{\mathbf{d}})^{\otimes m_n}$$
is the Cartan component. Then the same argument as in the construction of the embedding in \eqref{Pluecker} can be applied here to show that the image of the following embedding
$$\complete_{2n}^{\mathbf{d}}\hookrightarrow \mathbb{P}(\V_{\lambda}^{\mathbf{d}})\hookrightarrow \mathbb{P}(\U_{\lambda}^{\mathbf{d}})$$
is in fact contained in $\mathbb{P}_n^{\mathbf{d}}:=\mathbb{P}(\V_{\omega_1}^{\mathbf{d}})\times\ldots\times \mathbb{P}(\V_{\omega_n}^{\mathbf{d}})$, embedded in $\mathbb{P}(\U_{\lambda}^{\mathbf{d}})$ via diagonal and Segre embeddings:
$$\mathbb{P}_n^{\mathbf{d}}\hookrightarrow\mathbb{P}(\V_{\omega_1}^{\mathbf{d}})^{m_1}\times\ldots\times \mathbb{P}(\V_{\omega_n}^{\mathbf{d}})^{m_n}\hookrightarrow\mathbb{P}(\mathrm{U}_\lambda^{\mathbf{d}}).$$
As a summary, we fix the following Pl\"ucker embedding of $\complete_{2n}^{\mathbf{d}}$:
$$\complete_{2n}^{\mathbf{d}}\hookrightarrow\mathbb{P}_n^{\mathbf{d}}\hookrightarrow\mathbb{P}\Big({\bigwedge}^1\mathbb{C}^{2n} \Big)\times \ldots\times \mathbb{P}\Big({\bigwedge}^n\mathbb{C}^{2n} \Big).$$

To distinguish this degenerate setup for different $\mathbf{d}$, we denote $\mathcal{S}^{\mathbf{d}}:=\mathbb{C}[\X_{\Jm}^{\mathbf{d}}\mid \Jm\in\mathcal{P}]$ (see Section \ref{Sec:DefRel} for the definition of $\mathcal{P}$): the defining ideal of $\complete_{2n}^{\mathbf{d}}$ with respect to the above Pl\"ucker embedding will be denoted by $\mathfrak{I}_{2n}^{\mathbf{d}}\subseteq \mathcal{S}^{\mathbf{d}}$. Notice that giving $\X_{\Jm}^{\mathbf{d}}$ degree $\omega_{|J|}$, the ideal $\mathfrak{I}_{2n}^{\mathbf{d}}$ is $\Lambda^+$-graded. The goal of this section is to show that the ideal $\mathfrak{I}_{2n}^{\mathbf{d}}$ is in fact an initial ideal of $\mathfrak{I}_{2n}$ with respect to a weight vector.

We define a weight map 
$$w:\mathcal{K}_{2n}\to\mathbb{R}^{\mathcal{P}},\ \ \mathbf{d}\mapsto\mathbf{w}_{\mathbf{d}},$$
where $\mathbf{w}_{\mathbf{d}}$ is the function on $\mathcal{P}$ sending $J$ to $-\mathfrak{d}^{\mathbf{d}}(\mathbf{s}_{[k],J})$ (see \eqref{Eq:SIJ} for the definition of $\mathbf{s}_{[k],J}$). The function $\mathbf{w}_{\mathbf{d}}$ induces gradings on $\mathcal{S}$ and $\mathcal{S}^{\mathbf{d}}$ by assigning degree $\mathbf{w}_{\mathbf{d}}(J)$ to $X_J$ and $X_J^{\mathbf{d}}$.

When the element $\mathbf{d}\in\mathcal{K}_{2n}$ is clear from context, we will drop it from $\mathbf{w}_{\mathbf{d}}$ and simply write $\mathbf{w}$.

\begin{theorem}\label{Thm:main2}
The ideal $\mathfrak{I}_{2n}^{\mathbf{d}}$ coincides with the initial ideal $\mathrm{in}_{\mathbf{w}}(\mathfrak{I}_{2n})$.
\end{theorem}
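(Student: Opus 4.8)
The plan is to show the two ideals coincide by proving both that $\mathrm{in}_{\mathbf{w}}(\mathfrak{I}_{2n})\subseteq\mathfrak{I}_{2n}^{\mathbf{d}}$ and that the two quotient rings have the same Hilbert function in each multi-degree $\lambda\in\Lambda^+$, so that the surjection $\mathcal{S}/\mathrm{in}_{\mathbf{w}}(\mathfrak{I}_{2n})\twoheadrightarrow\mathcal{S}^{\mathbf{d}}/\mathfrak{I}_{2n}^{\mathbf{d}}$ forced by the inclusion must be an isomorphism. The identification of $\mathcal{S}$ with $\mathcal{S}^{\mathbf{d}}$ via $X_J\mapsto X_J^{\mathbf{d}}$ is harmless since both are polynomial rings on $\mathcal{P}$; the weight function $\mathbf{w}_{\mathbf{d}}$ was built precisely so that the $\mathbf{w}$-graded ring $\mathcal{S}$ degenerates toward $\mathcal{S}^{\mathbf{d}}$. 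Concretely, one first observes that the generators $X_J$ of $\mathcal{S}$, and hence the monomials in $\mathcal{S}$, carry $\mathbf{w}$-degrees governed by $-\mathfrak{d}^{\mathbf{d}}(\mathbf{s}_{[k],J})$, and that on the multi-homogeneous component of degree $\lambda$ the minimal $\mathbf{w}$-weight monomials in a given $\V_\lambda$-weight space are exactly the ones indexed (via the valuation-type correspondence with $f^{\mathbf{s}}$) by the lattice points $\mathbf{s}\in\set(\lambda)$. This last fact is the content of Theorem~\ref{Thm:main1} together with the monomiality established in Corollary~\ref{Cor:Monomial} for interior $\mathbf{d}$, and for boundary $\mathbf{d}$ one degenerates further (or appeals to upper semicontinuity of initial-ideal formation along a face, using Lemma~\ref{Lem:Face}).

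The step I would carry out first is the \emph{inclusion} $\mathrm{in}_{\mathbf{w}}(\mathfrak{I}_{2n})\subseteq\mathfrak{I}_{2n}^{\mathbf{d}}$. For this it suffices, since $\mathfrak{I}_{2n}$ is generated by the quadratic Pl\"ucker relations $R^s_{\Lm,\Jm}$ and the linear relations $S_{(\I_1,\I_2)}$, to show that the $\mathbf{w}$-initial form of each such generator lies in $\mathfrak{I}_{2n}^{\mathbf{d}}$ — this uses that these generators are themselves multi-homogeneous, so their initial forms are obtained simply by discarding the non-minimal-$\mathbf{w}$ monomials, and the fan/cone inequalities $(\mathrm{A})$–$(\mathrm{H})$ from Proposition~\ref{Prop:generalinequalities} let one read off which monomials survive. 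The surviving terms must then be recognized as relations holding on $\complete_{2n}^{\mathbf{d}}$; here the birational-sequence valuation $\mathcal{V}$ (from the techniques flagged in item (b) of the introduction, i.e.\ \cite{FFL17}) attached to the chosen order on root vectors is the natural bookkeeping device: applying $\mathcal{V}$ to a relation in $\mathfrak{I}_{2n}$ and reading its leading term gives an element of $\mathfrak{I}_{2n}^{\mathbf{d}}$, and one checks $\mathcal{V}$ is compatible with $\mathbf{w}_{\mathbf{d}}$ by construction of $\mathbf{w}_{\mathbf{d}}$.

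The \emph{dimension count} is then the second half: for each $\lambda$, the degree-$\lambda$ piece of $\mathcal{S}/\mathfrak{I}_{2n}$ is $\V_\lambda^\ast$ of dimension $|\set(\lambda)|$ by Theorem~\ref{Thm:DefIdeal} and Theorem~\ref{thm:FFLV-degenerate}; passing to $\mathrm{in}_{\mathbf{w}}$ preserves Hilbert function in each multi-degree (flatness of Gr\"obner degeneration), so $\dim(\mathcal{S}/\mathrm{in}_{\mathbf{w}}(\mathfrak{I}_{2n}))_\lambda=|\set(\lambda)|$; and on the other side, $\mathbb{C}[\complete_{2n}^{\mathbf{d}}]_\lambda$ is the degree-$\lambda$ part of the homogeneous coordinate ring of the closure of the highest-weight orbit, which by the embedding into $\mathbb{P}_n^{\mathbf{d}}$ and Proposition~\ref{Prop:Cartan} surjects onto $(\V_\lambda^{\mathbf{d}})^\ast$ of dimension $|\set(\lambda)|$ again. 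Since a surjection of finite-dimensional spaces of equal dimension is an isomorphism, the surjection $\mathcal{S}/\mathrm{in}_{\mathbf{w}}(\mathfrak{I}_{2n})\twoheadrightarrow\mathcal{S}^{\mathbf{d}}/\mathfrak{I}_{2n}^{\mathbf{d}}$ is an isomorphism in every degree, giving $\mathrm{in}_{\mathbf{w}}(\mathfrak{I}_{2n})=\mathfrak{I}_{2n}^{\mathbf{d}}$.

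\textbf{Main obstacle.} The genuinely delicate point is the surjectivity $\mathcal{S}^{\mathbf{d}}/\mathfrak{I}_{2n}^{\mathbf{d}}\twoheadrightarrow(\V_\lambda^{\mathbf{d}})^\ast$ of the \emph{expected} size, equivalently that the symplectic PBW-semistandard tableaux of \cite{B20} descend to a spanning set of $\mathbb{C}[\complete_{2n}^{\mathbf{d}}]_\lambda$ of cardinality exactly $|\set(\lambda)|$ — i.e.\ that no extra relations beyond $\mathfrak{I}_{2n}^{\mathbf{d}}$ are needed and no collapse occurs. This requires the non-minuscule analysis of Section~\ref{Subsec:Fund} (fundamental representations are only subrepresentations of exterior powers) fed into the straightening/valuation argument, and it is where the symplectic case genuinely diverges from \cite{FFFM19}; everything else is essentially bookkeeping with the inequalities of Proposition~\ref{Prop:generalinequalities}.
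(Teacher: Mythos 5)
Your overall architecture --- first prove $\mathrm{in}_{\mathbf{w}}(\mathfrak{I}_{2n})\subseteq\mathfrak{I}_{2n}^{\mathbf{d}}$, then force equality degree by degree by comparing the Hilbert function of $\mathcal{S}/\mathrm{in}_{\mathbf{w}}(\mathfrak{I}_{2n})$ (computed by flatness of the Gr\"obner degeneration) with a lower bound on $\dim(\mathcal{S}^{\mathbf{d}}/\mathfrak{I}_{2n}^{\mathbf{d}})_\lambda$ --- is exactly the paper's, and you correctly single out the lower bound as the hard point. However, there is a genuine gap in your first step. You assert that to prove $\mathrm{in}_{\mathbf{w}}(\mathfrak{I}_{2n})\subseteq\mathfrak{I}_{2n}^{\mathbf{d}}$ it suffices to check that the initial forms of the generators $R^s_{\Lm,\Jm}$ and $S_{(\I_1,\I_2)}$ lie in $\mathfrak{I}_{2n}^{\mathbf{d}}$. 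This reduction is invalid in the direction you need it: the ideal generated by the initial forms of a generating set is in general a \emph{proper} subideal of $\mathrm{in}_{\mathbf{w}}(\mathfrak{I}_{2n})$, with equality only when the generators form a Gr\"obner basis for $\mathbf{w}$ --- which is not known at this stage and would itself be a substantial claim. So verifying the generators controls $\langle\mathrm{in}_{\mathbf{w}}(R^s_{\Lm,\Jm}),\mathrm{in}_{\mathbf{w}}(S_{(\I_1,\I_2)})\rangle$ but says nothing about the full initial ideal. The paper circumvents this entirely by proving, for \emph{every} $f\in\mathcal{S}$, the identity $\varphi^{\mathbf{d}}(\mathrm{in}_{\mathbf{w}}(f))=\mathrm{in}_{\mathbf{d}}(\varphi(f))$ relating the weight $\mathbf{w}_{\mathbf{d}}$ on Pl\"ucker variables to the grading by $-d_{\beta_i}$ on the birational chart $\mathbb{C}[t_1,\ldots,t_N,z_1,\ldots,z_n]$, and then invoking $\ker\varphi=\mathfrak{I}_{2n}$ and $\ker\varphi^{\mathbf{d}}=\mathfrak{I}_{2n}^{\mathbf{d}}$ (Lemmas \ref{Lem:PolMap1} and \ref{Lem:PolMap2}) to conclude $\mathrm{in}_{\mathbf{w}}(f)\in\mathfrak{I}_{2n}^{\mathbf{d}}$ for every $f\in\mathfrak{I}_{2n}$. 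Your later remark about applying the birational-sequence valuation ``to a relation in $\mathfrak{I}_{2n}$'' gestures at this, but as written the reduction to generators is the step that fails.

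On the second half, your route to the lower bound $\dim(\mathcal{S}^{\mathbf{d}}/\mathfrak{I}_{2n}^{\mathbf{d}})_\lambda\geq\dim\V_\lambda$ --- via the fact that the orbit of $\nu_\lambda^{\mathbf{d}}$ spans the Cartan component (Proposition \ref{Prop:Cartan}), so that a copy of $(\V_\lambda^{\mathbf{d}})^*$ embeds into the degree-$\lambda$ coordinate ring --- is viable but differs from the paper's, which instead exhibits $|\SyST_\lambda|=\dim\V_\lambda$ explicitly linearly independent elements $\varphi^{\mathbf{d}}(X_T^{\mathbf{d}})$ by showing their values under the valuation $\nu_{>_r}$ are the pairwise distinct points $\rho_\lambda(T)\in\set(\lambda)$ (Lemma \ref{Lem:TabBij} and Proposition \ref{Prop:TabFFLV}). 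The paper's choice simultaneously yields the tableau basis of Corollary \ref{Cor:Basis}, which is used later; if you take your route you still owe the argument that every linear functional on $\V_\lambda^{\mathbf{d}}\subseteq\mathrm{U}_\lambda^{\mathbf{d}}$ is actually realized by a multi-degree-$\lambda$ element of $\mathcal{S}^{\mathbf{d}}$ through the diagonal and Segre embeddings, and the ``Main obstacle'' you flag remains to be carried out rather than merely identified.
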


\begin{remark}
When $\mathbf{d}\in\mathbb{R}^{\Phi^+}$ is the constant function with $d(\beta)=1$, the variety $\complete_{2n}^{\mathbf{d}}$ is the symplectic degenerate flag variety introduced by Feigin, Finkelberg and Littelmann in \cite{FFL14}.
\end{remark}

\subsection{Symplectic PBW-semistandard tableaux}\label{Sec:PBWTab}
We recall a set of tableaux from \cite{B20} which is compatible with weighted degenerations (Corollary \ref{Cor:Basis}), and which will be useful in our constructions henceforth. 

To a dominant integral weight $\lambda=\sum_{k=1}^nm_k\omega_k\in\Lambda^+$, we assign a partition $\lambda = (\lambda_1\geq \lambda_2\geq \ldots \geq \lambda_n \geq 0)$ in the usual way, that is, by setting $\lambda_i= m_i+\ldots+m_n$. To such a partition, one attaches a Young diagram (we make use of the English convention), denoted by $Y_{\lambda}$.
A symplectic PBW tableau, $\tableau_{\lambda}$ of shape $\lambda$ is a filling of the corresponding Young diagram $Y_{\lambda}$ with numbers $\tableau_{i,j}\in \{1,\ldots,n,\overline{n},\ldots, \overline{1}\}$ such that for $\mu_j$, the length of the $j$-th column, we have:

 \begin{enumerate}
  \item[(i)] if $\tableau_{i,j}\leq \mu_j$, then $\tableau_{i,j}=i$;
  \item[(ii)] if $\tableau_{i_1,j}\neq i_1$, and $i_1<i_2$, then $\tableau_{i_1,j}>\tableau_{i_2,j}$;
  \item[(iii)] if $\tableau_{i,j}= i$, and $\exists \,\, i'$ such that $\tableau_{i',j}=\overline{i}$, then $i'<i$.
 \end{enumerate}
 
A symplectic PBW tableau is said to be PBW-semistandard if in addition, the following condition is satisfied: 

\begin{enumerate}
 \item[(iv)] for every $j>1$ and every $i$, $\exists \,\, i'\geq i$ such that $\tableau_{i',j-1}\geq \tableau_{i,j}$.
\end{enumerate}

Let $\SyST_{\lambda}$ denote the set of all symplectic PBW-semistandard tableaux of shape $\lambda$. To each $\tableau\in \SyST_{\lambda}$, we associate the monomial
$\X_{\tableau}=\prod_{j=1}^{\lambda_1}\X_{\tableau_{1,j},\ldots,\tableau_{\mu_j,j}}\in \V_{\lambda}^*.$

\begin{theorem}[\cite{B20}]\label{Thm:SYST}
The elements $\X_{\tableau}$, $\tableau\in \SyST_{\lambda}$, form a basis of $\mathbb{C}[\complete_{2n}]_{\lambda}$.
\end{theorem}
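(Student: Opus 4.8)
The plan is to prove this (as in \cite{B20}) by a symplectic straightening-law argument, the analogue for $\complete_{2n}$ of Hodge's basis theorem for Grassmannians. Write $\mathbb{C}[\complete_{2n}]_\lambda=\mathcal{S}_\lambda/(\mathfrak{I}_{2n})_\lambda$ and recall that $\mathcal{S}_\lambda$ has a monomial basis indexed by fillings of the Young diagram $Y_\lambda$, a filling assigning to the $j$-th column a size-$\mu_j$ subset $C_j\subseteq\{1,\dots,n,\overline{n},\dots,\overline{1}\}$ and giving the monomial $\prod_{j}\X_{C_j}$; call such a monomial \emph{straight} if its filling lies in $\SyST_\lambda$. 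It suffices to show (a) the straight monomials span $\mathbb{C}[\complete_{2n}]_\lambda$, and (b) $|\SyST_\lambda|=\dim\mathbb{C}[\complete_{2n}]_\lambda$, since then a spanning set of the right cardinality is a basis. For (b) one uses $\dim\mathbb{C}[\complete_{2n}]_\lambda=\dim\V_\lambda^{*}=|\set(\lambda)|$, the first equality being the identification $\mathbb{C}[\complete_{2n}]_\lambda\cong\V_\lambda^{*}$ recorded after Theorem~\ref{Thm:DefIdeal} and the second being Theorem~\ref{thm:FFLV-degenerate}(2), together with an explicit bijection $\SyST_\lambda\leftrightarrow\set(\lambda)$.

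For (a) I would run a straightening algorithm. Equip the set of fillings of $Y_\lambda$ with a well-founded order --- say, order individual columns by a linear refinement of the dominance order of Section~\ref{Sec:DefRel}, then compare two fillings through the sorted multiset of their columns, with ties broken so that the moves below are strictly decreasing. If a filling is not in $\SyST_\lambda$, one of the tableau conditions (i)--(iv) fails: when (iv) fails on a pair of adjacent columns, or one of (i)--(iii) fails, rewrite the product of the two offending columns via a quadratic Pl\"ucker relation $R^{s}_{\Lm,\Jm}$ from \eqref{eqn:relations-Cn} as a combination of strictly smaller column pairs; when the obstruction is of symplectic type --- the content of a single column is a non-reverse-admissible tuple $(\I_1,\I_2)$ --- rewrite that column using the linear relation $S_{(\I_1,\I_2)}$ from \eqref{eqn:linear-relations}. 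Since the poset of fillings of the fixed shape $\lambda$ is finite, the procedure terminates and expresses every monomial, modulo $\mathfrak{I}_{2n}$, as a linear combination of straight monomials.

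It remains to produce the bijection of (b). To $\tableau\in\SyST_\lambda$ with columns $C_1,\dots,C_{\lambda_1}$ associate $\mathbf{s}(\tableau):=\mathbf{s}_{[\mu_1],C_1}+\dots+\mathbf{s}_{[\mu_{\lambda_1}],C_{\lambda_1}}\in\mathbb{N}^{\Phi^{+}}$ with $\mathbf{s}_{I,J}$ as in \eqref{Eq:SIJ}; conditions (i)--(iv) translate, via the Minkowski-sum property and the standard decomposition of Lemma~\ref{Lem:StdDec}, into the assertion that $\mathbf{s}(\tableau)\in\set(\lambda)$, and injectivity and surjectivity are checked column by column using that single-column PBW-semistandard fillings match the lattice points of $\mathrm{FFLV}(\omega_k)$. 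Together with (a) this gives the theorem. Alternatively, linear independence of the straight monomials can be seen directly, without the count: pairing $\X_{\tableau}$ against the FFLV vectors $f^{\mathbf{s}(\tableau')}\cdot\nu_\lambda\in\V_\lambda$ and ordering $\SyST_\lambda$ by increasing value of $\mathfrak{d}^{\mathbf{d}}$ composed with $\mathbf{s}$ for $\mathbf{d}$ in the interior of $\mathcal{K}_{2n}$, the computations of minima in Lemmas~\ref{Lem:Fund1} and~\ref{Lem:Fundk} make this pairing matrix triangular with nonzero diagonal.

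The step I expect to be the main obstacle is the symplectic straightening in (a): unlike in type ${\tt A}$, the quadratic relations \eqref{eqn:relations-Cn} must be interleaved with the linear ones \eqref{eqn:linear-relations}, and the delicate points are to design the order on fillings so that both sorts of move strictly decrease it and to verify that every column appearing on the right-hand side of \eqref{eqn:relations-Cn} or \eqref{eqn:linear-relations} is again a legitimate column for the shape $\lambda$, so that the algorithm stays inside $\mathcal{S}_\lambda$. The bijection in (b) needs an analogous, but lighter, combinatorial check.
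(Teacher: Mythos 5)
First, note that this paper does not prove Theorem~\ref{Thm:SYST}: it is imported verbatim from \cite{B20}, so there is no internal proof to compare against. Your architecture --- (a) spanning of $\mathbb{C}[\complete_{2n}]_\lambda$ by straight monomials via a straightening law interleaving the quadratic relations \eqref{eqn:relations-Cn} with the linear symplectic relations \eqref{eqn:linear-relations}, plus (b) the cardinality match $|\SyST_\lambda|=|\set(\lambda)|=\dim\V_\lambda$ --- is indeed the strategy of the cited reference, so the plan is sound in outline.

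That said, as a proof the proposal has two genuine gaps. The first is the one you flag yourself, and it is not a minor obstacle but essentially the entire content of the result: you never exhibit a well-founded order on fillings for which \emph{both} kinds of moves are strictly decreasing. This is delicate here because the ``standardization'' $T\mapsto\mathrm{st}(T)$ of Section~\ref{Sec:PBWTab} is not a rearrangement of a column but changes its content (a pair $\{i,\ov{i}\}$ is traded for $\{p,\ov{p}\}$), the failure of condition (iii) for a column is not literally the same as non-reverse-admissibility of its content, and the right-hand sides of \eqref{eqn:linear-relations} can themselves be non-straight, so termination genuinely has to be argued; none of this is done.

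The second gap is a circularity risk in step (b). The bijection $\SyST_\lambda\leftrightarrow\set(\lambda)$ you invoke is Proposition~\ref{Prop:TabFFLV} of this paper, but its proof there deduces injectivity from surjectivity \emph{together with} the equality of cardinalities $|\SyST_\lambda|=\dim\V_\lambda$ --- i.e.\ from Theorem~\ref{Thm:SYST} itself. Your claim that ``injectivity and surjectivity are checked column by column'' does not repair this: the map is the Minkowski sum $T\mapsto\sum_j\mathbf{s}_{[\mu_j],C_j}$, and since the decomposition $\set(\lambda)=\sum_k\set(\omega_{i_k})$ is not unique, injectivity of the sum does not follow from injectivity of each column map. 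One must either prove directly that the standard decomposition of Lemma~\ref{Lem:StdDec} is a two-sided inverse (recovering the columns from the sum), or count $\SyST_\lambda$ by some independent means. The same issue undercuts your alternative triangular-pairing argument for linear independence, since setting up a square triangular matrix indexed by $\SyST_\lambda$ against $\set(\lambda)$ already presupposes the injectivity of $T\mapsto\mathbf{s}(T)$.
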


We will need the following construction in Section \ref{Sec:MaxCone}.

For $\mathrm{J}=(j_1,\ldots,j_d)\in\mathcal{P}$ with $1\leq j_1<\ldots<j_k\leq d$. We consider the strip tableau $T:=T(J)$ obtained as follows: first fill the $j_1,\ldots,j_k$-th box from top by $j_1,\ldots,j_k$; then fill the rest of the diagram from bottom to top by $j_{k+1},\ldots,j_d$. Such a tableau $T$ is not necessarily a symplectic PBW tableau: from construction the conditions (i) and (ii) are fulfilled, but the condition (iii) does not always hold. We will construct another tableau $\mathrm{st}(T)$ called the \emph{standardization} of $T$.

Assume that the condition (iii) is violated in $T$, then there exists $1\leq i\leq d$ such that $\ov{i}$ appears at the $p$-th box from top (notice that by (i), $i$ appears at the $i$-th box from top). The assumption ensures $i<p$. We define a new tableau $T_1$ of the same shape as $T$, which differs to $T$ at the $i$-th and the $p$-th boxes, where the $i$-th (resp. $p$-th) box of $T_1$ is filled by $\ov{p}$ (resp. $p$). From construction, this tableau $T_1$ satisfies the conditions (i) and (ii), and the number of $i$, such that $1\leq i\leq d$ and the condition (iii) is violated at $i$, decreases by one. Repeating the above procedure to $T_1$ until the condition (iii) is violated nowhere, the result is a symplectic PBW tableau $\mathrm{st}(T)$.

\subsection{Birational sequences and affine chart}\label{Section:BirSeq}

We introduce a chart of $\complete_{2n}^{\mathbf{d}}$ motivated by birational sequences introduced in \cite{FFL17}.

We enumerate the positive roots in $\Phi^+=\{\beta_1,\beta_2,\ldots,\beta_N\}$ in such a way that if $\beta_i-\beta_j$ is a sum of positive roots, then $i<j$. Such a sequence of positive roots $(\beta_1,\beta_2,\ldots,\beta_N)$ is called a \emph{good sequence} in \emph{loc.cit.} This fixed enumeration gives an isomorphism of affine varieties
$$\mathbb{C}^N\stackrel{\sim}{\longrightarrow}\mathrm{N},\ \ (t_1,\ldots,t_N)\mapsto \exp(t_1f_{\beta_1})\cdots\exp(t_Nf_{\beta_N}).$$
In view of the Pl\"ucker embedding
$$\complete_{2n}=\overline{\mathrm{N}\cdot ([\nu_{\omega_1}],\ldots,[\nu_{\omega_n}])}\hookrightarrow\mathbb{P}\Big({\bigwedge}^1\mathbb{C}^{2n} \Big)\times \ldots\times \mathbb{P}\Big({\bigwedge}^n\mathbb{C}^{2n} \Big),$$
for a fixed $x=\exp(t_1f_{\beta_1})\cdots\exp(t_Nf_{\beta_N})\in \mathrm{N}$ and $J\in\mathcal{P}$ with $k:=|J|$, the value $X_J(x\cdot ([\nu_{\omega_1}],\ldots,[\nu_{\omega_n}]))=X_J(x\cdot [\nu_{\omega_k}])$. When $t_1,\ldots,t_N$ vary in $\mathbb{C}$, we obtain a polynomial 
$$p_J:=X_J(x\cdot [\nu_{\omega_k}])\in\mathbb{C}[t_1,\ldots,t_N].$$

We define an algebra morphism
$$\varphi:\mathcal{S}\to\mathbb{C}[t_1,\ldots,t_N,z_1,\ldots,z_n],\ \ X_J\mapsto p_Jz_{|J|},$$
where the variables $z_1,\ldots,z_n$ are used to reflect the multi-homogeneity of the Pl\"ucker embedding. We consider the $\Lambda^+$-grading on $\mathbb{C}[t_1,\ldots,t_N,z_1,\ldots,z_n]$ by assigning degree $\omega_k$ to $z_k$, $1\leq k\leq n$ and degree $0$ to $t_i$, $1\leq i\leq N$. With the natural $\Lambda^+$-grading on $\mathcal{S}$, $\varphi$ is $\Lambda^+$-graded. The following lemma follows from definition.

\begin{lemma}\label{Lem:PolMap1}
We have: $\ker\varphi=\mathfrak{I}_{2n}$.
\end{lemma}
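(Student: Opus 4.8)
The plan is to prove the two inclusions separately. For $\mathfrak{I}_{2n}\subseteq\ker\varphi$, I would argue that $\varphi$ factors geometrically: the map $(t_1,\ldots,t_N)\mapsto \exp(t_1f_{\beta_1})\cdots\exp(t_Nf_{\beta_N})\cdot([\nu_{\omega_1}],\ldots,[\nu_{\omega_n}])$ parametrizes (an affine chart of) the $\mathrm{N}$-orbit through the highest weight line, whose closure is $\complete_{2n}$ by Definition \ref{Def:SymplecticFlag}. Concretely, for any $J\in\mathcal{P}$ the polynomial $p_J$ is, by construction, the Pl\"ucker coordinate $X_J$ evaluated along this orbit map, and the extra variables $z_k$ only record the multi-grading. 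Hence for any $F\in\mathfrak{I}_{2n}$ — which by Theorem \ref{Thm:DefIdeal} is the defining ideal of $\complete_{2n}$ with respect to the Pl\"ucker embedding — substituting $X_J\mapsto p_J z_{|J|}$ gives a function vanishing identically on the image of the chart, hence the zero polynomial in $\mathbb{C}[t_1,\ldots,t_N,z_1,\ldots,z_n]$, because the map $\mathbb{C}^N\to\mathrm{N}$ is an isomorphism of varieties and $\mathrm{N}\cdot([\nu_{\omega_1}],\ldots,[\nu_{\omega_n}])$ is dense in $\complete_{2n}$. Thus $F\in\ker\varphi$.

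For the reverse inclusion $\ker\varphi\subseteq\mathfrak{I}_{2n}$, the key point is that $\varphi$ is $\Lambda^+$-graded, so it suffices to work degree by degree. In degree $\lambda\in\Lambda^+$, the source $\mathcal{S}_\lambda$ surjects onto $\mathbb{C}[\complete_{2n}]_\lambda\cong\V_\lambda^*$ (the last displayed isomorphism before Section \ref{Sec:TropSymp}), with kernel exactly $(\mathfrak{I}_{2n})_\lambda$ by Theorem \ref{Thm:DefIdeal}. On the other hand, the composite $\mathcal{S}_\lambda\to\mathbb{C}[t_1,\ldots,t_N,z_1,\ldots,z_n]_\lambda$ lands (after stripping the $z$-monomial $z_1^{m_1}\cdots z_n^{m_n}$) in $\mathbb{C}[t_1,\ldots,t_N]$, and I would identify this composite with the coordinate restriction $\mathbb{C}[\complete_{2n}]_\lambda\hookrightarrow\mathbb{C}[\mathrm{N}]$ coming from the orbit map, which is injective precisely because the orbit is dense. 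Chaining these, $\ker\varphi\cap\mathcal{S}_\lambda=(\mathfrak{I}_{2n})_\lambda$, and summing over $\lambda$ gives $\ker\varphi=\mathfrak{I}_{2n}$.

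The step I expect to require the most care is verifying that the orbit map's pullback on degree-$\lambda$ functions is genuinely injective — equivalently, that a nonzero section of $\mathcal{O}(\lambda)$ on $\complete_{2n}$ cannot vanish on the dense chart $\exp(t_1f_{\beta_1})\cdots\exp(t_Nf_{\beta_N})\cdot[\nu_\lambda]$. This follows from density of $\mathrm{N}\cdot[\nu_\lambda]$ together with irreducibility of $\complete_{2n}$, but one must be slightly attentive to the passage between the multi-projective picture $\mathbb{P}_n$ and the single projective space $\mathbb{P}(\U_\lambda)$, and to the bookkeeping of the auxiliary variables $z_1,\ldots,z_n$, so that the grading argument is clean. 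Everything else is essentially the observation that $\varphi$ is, by design, the algebra map dual to the affine chart of $\complete_{2n}$ furnished by the good sequence $(\beta_1,\ldots,\beta_N)$, so $\ker\varphi$ is the vanishing ideal, which is $\mathfrak{I}_{2n}$.
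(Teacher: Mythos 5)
Your proposal is correct and is exactly the argument the paper has in mind: the paper states that this lemma "follows from definition," and the same density-of-the-orbit argument is spelled out explicitly in its proof of the companion statement for $\varphi^{\mathbf{d}}$ (Lemma \ref{Lem:PolMap2}), namely that a $\Lambda^+$-homogeneous $f$ satisfies $\varphi(f)=f(x\cdot([\nu_{\omega_1}],\ldots,[\nu_{\omega_n}]))z_1^{\mu_1}\cdots z_n^{\mu_n}$ for generic $x\in\mathrm{N}$, so $\ker\varphi$ is the vanishing ideal of the closure of the dense orbit, which is $\mathfrak{I}_{2n}$ by Theorem \ref{Thm:DefIdeal}. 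Your more detailed treatment of the degreewise injectivity and the $z$-variable bookkeeping is a faithful elaboration of the same route, not a different one.
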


In the degenerate setting, we consider the following isomorphism of affine varieties
$$\mathbb{C}^N\stackrel{\sim}{\longrightarrow}\mathrm{N}^{\mathbf{d}},\ \ (t_1,\ldots,t_N)\mapsto \exp(t_1f_{\beta_1}^{\mathbf{d}})\cdots\exp(t_Nf_{\beta_N}^{\mathbf{d}}).$$
We have fixed the Pl\"ucker embedding
$$\complete_{2n}^{\mathbf{d}}=\overline{\mathrm{N}^{\mathbf{d}}\cdot ([\nu_{\omega_1}^{\mathbf{d}}],\ldots,[\nu_{\omega_n}^{\mathbf{d}}])}\hookrightarrow\mathbb{P}\Big({\bigwedge}^1\mathbb{C}^{2n} \Big)\times \ldots\times \mathbb{P}\Big({\bigwedge}^n\mathbb{C}^{2n} \Big).$$
We define a grading on $\mathbb{C}[t_1,\ldots,t_N]$ by assigning degree $-d_{\beta_i}$ to the variable $t_i$. Then for $x=\exp(t_1f_{\beta_1}^{\mathbf{d}})\cdots\exp(t_Nf_{\beta_N}^{\mathbf{d}})\in\mathrm{N}^{\mathbf{d}}$, $X_J^{\mathbf{d}}(x\cdot ([\nu_{\omega_1}^{\mathbf{d}}],\ldots,[\nu_{\omega_n}^{\mathbf{d}}]))=\mathrm{in}_{\mathbf{d}}(p_J)\in\mathbb{C}[t_1,\ldots,t_N]$, where $\mathrm{in}_{\mathbf{d}}$ is the initial term with respect to the above grading on $\mathbb{C}[t_1,\ldots,t_N]$. We set $p_J^{\mathbf{d}}:=\mathrm{in}_{\mathbf{d}}(p_J)$. This notation $\mathrm{in}_{\mathbf{d}}$ can be extended to $\mathbb{C}[t_1,\ldots,t_N,z_1,\ldots,z_n]$ by requiring the variables $z_1,\ldots,z_n$ to have degree $0$. We will denote this degree of a polynomial $p\in\mathbb{C}[t_1,\ldots,t_N,z_1,\ldots,z_n]$ by $\mathrm{deg}_{\mathbf{d}}(p)$.

Similarly we define an algebra morphism
$$\varphi^{\mathbf{d}}:\mathcal{S}^{\mathbf{d}}\to\mathbb{C}[t_1,\ldots,t_N,z_1,\ldots,z_n],\ \ X_J^{\mathbf{d}}\mapsto p_J^{\mathbf{d}}z_{|J|}.$$
With the same $\Lambda^+$-grading as above, $\varphi^{\mathbf{d}}$ is $\Lambda^+$-graded. 

\begin{lemma}\label{Lem:PolMap2}
We have: $\ker\varphi^{\mathbf{d}}=\mathfrak{I}_{2n}^{\mathbf{d}}$.
\end{lemma}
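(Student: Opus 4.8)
The plan is to mirror the proof of Lemma \ref{Lem:PolMap1} but in the graded setting, using Theorem \ref{Thm:main1} and Proposition \ref{Prop:Cartan} to identify the image of $\varphi^{\mathbf{d}}$ with the degenerate coordinate ring. First I would observe that the homogeneous component of $\mathcal{S}^{\mathbf{d}}$ of degree $\lambda$ surjects onto $\mathbb{C}[\complete_{2n}^{\mathbf{d}}]_\lambda$, which by the construction of $\complete_{2n}^{\mathbf{d}}$ as the closure of the highest weight orbit $\overline{\mathrm{N}^{\mathbf{d}}\cdot[\nu_\lambda^{\mathbf{d}}]}$ inside $\mathbb{P}(\V_\lambda^{\mathbf{d}})$ is dual to the cyclic $U(\mathfrak{n}_-^{\mathbf{d}})$-module $\V_\lambda^{\mathbf{d}}$; here I use that $\V_\lambda^{\mathbf{d}}$ is the Cartan component of $(\V_{\omega_1}^{\mathbf{d}})^{\otimes m_1}\otimes\cdots\otimes(\V_{\omega_n}^{\mathbf{d}})^{\otimes m_n}$ (Proposition \ref{Prop:Cartan}) so that the multigraded piece of $\mathbb{C}[\complete_{2n}^{\mathbf{d}}]$ of degree $\lambda$ really is $(\V_\lambda^{\mathbf{d}})^*$. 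Thus $\dim \mathcal{S}^{\mathbf{d}}_\lambda/\mathfrak{I}_{2n,\lambda}^{\mathbf{d}} = \dim \V_\lambda^{\mathbf{d}} = \dim\V_\lambda = |\set(\lambda)|$ by Theorem \ref{thm:FFLV-degenerate}(2).

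Next I would compute the image of $\varphi^{\mathbf{d}}$ in each degree $\lambda$. For $x=\exp(t_1 f_{\beta_1}^{\mathbf{d}})\cdots\exp(t_N f_{\beta_N}^{\mathbf{d}})\in\mathrm{N}^{\mathbf{d}}$, the point $x\cdot[\nu_\lambda^{\mathbf{d}}]$ has Pl\"ucker coordinates read off from the vector $x\cdot\nu_\lambda^{\mathbf{d}}\in\V_\lambda^{\mathbf{d}}$, so the span of the degree-$\lambda$ image of $\varphi^{\mathbf{d}}$ (after stripping $z^\lambda$) is exactly the span of the coordinate functions of $\exp(t_1 f_{\beta_1}^{\mathbf{d}})\cdots\exp(t_N f_{\beta_N}^{\mathbf{d}})\cdot\nu_\lambda^{\mathbf{d}}$, which is the image of $\V_\lambda^{\mathbf{d}}$ under a linear map to polynomials and hence has dimension $\leq\dim\V_\lambda^{\mathbf{d}}$. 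Since $\mathfrak{I}_{2n}^{\mathbf{d}}\subseteq\ker\varphi^{\mathbf{d}}$ — this is the key inclusion: every defining relation of $\complete_{2n}^{\mathbf{d}}$ vanishes on its dense orbit by Theorem \ref{Thm:DefIdeal} applied in the degenerate setting, together with the fact that $\varphi^{\mathbf{d}}(\X_J^{\mathbf{d}})$ is literally the $J$-th Pl\"ucker coordinate of the generic point — we get a chain of surjections $\mathcal{S}^{\mathbf{d}}_\lambda/\mathfrak{I}_{2n,\lambda}^{\mathbf{d}}\twoheadrightarrow \mathcal{S}^{\mathbf{d}}_\lambda/\ker\varphi^{\mathbf{d}}_\lambda \hookrightarrow \operatorname{image}$, and a dimension count in each $\lambda$ using $\dim\operatorname{image}_\lambda \le \dim\V_\lambda^{\mathbf{d}} = \dim\mathcal{S}^{\mathbf{d}}_\lambda/\mathfrak{I}_{2n,\lambda}^{\mathbf{d}}$ forces all three spaces to have the same dimension, so $\mathfrak{I}_{2n,\lambda}^{\mathbf{d}}=\ker\varphi^{\mathbf{d}}_\lambda$ for every $\lambda$; being $\Lambda^+$-graded, the ideals agree.

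The main obstacle I anticipate is making precise the claim that the span of coordinate functions of $x\cdot\nu_\lambda^{\mathbf{d}}$ as $x$ ranges over $\mathrm{N}^{\mathbf{d}}$ has dimension \emph{equal} to (not merely bounded by) $\dim\V_\lambda^{\mathbf{d}}$ — equivalently, that $\mathrm{N}^{\mathbf{d}}\cdot\nu_\lambda^{\mathbf{d}}$ is not contained in a proper linear subspace of $\V_\lambda^{\mathbf{d}}$. This follows because $\V_\lambda^{\mathbf{d}}=U(\mathfrak{n}_-^{\mathbf{d}})\cdot\nu_\lambda^{\mathbf{d}}$ is cyclic and $U(\mathfrak{n}_-^{\mathbf{d}})$ is spanned by products of the $f_\beta^{\mathbf{d}}$, so the Taylor expansion of $t\mapsto \exp(\sum t_i f_{\beta_i}^{\mathbf{d}})\cdot\nu_\lambda^{\mathbf{d}}$ at $t=0$ hits every element of a basis of $\V_\lambda^{\mathbf{d}}$; equivalently, the basis $\{f_{\mathbf{d}}^{\mathbf{s}}\cdot\nu_\lambda^{\mathbf{d}}\mid \mathbf{s}\in\set(\lambda)\}$ from Theorem \ref{Thm:main1} realizes each $f_{\mathbf{d}}^{\mathbf{s}}\cdot\nu_\lambda^{\mathbf{d}}$ as the coefficient of a monomial $t^{\mathbf{s}}$ in the orbit map (after reordering the exponentials, which is harmless up to lower-order terms in the $\mathbf{d}$-grading since the $f_{\beta_i}^{\mathbf{d}}$ commute or satisfy the graded relations of $\mathfrak{n}_-^{\mathbf{d}}$), so the coordinate functions separate these basis vectors and span a space of the full dimension $|\set(\lambda)|$. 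Once this independence is secured, the dimension count closes the argument, and one concludes $\ker\varphi^{\mathbf{d}}=\mathfrak{I}_{2n}^{\mathbf{d}}$.
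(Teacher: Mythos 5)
Your conclusion is right, but you have taken a long detour around what the paper treats as a one-line observation, and along the way you lean on one incorrect citation. The crucial point is that $\mathfrak{I}_{2n}^{\mathbf{d}}$ is \emph{defined} in Section \ref{Sec:WeightedDeg} as the full vanishing ideal of $\complete_{2n}^{\mathbf{d}}$ under its Pl\"ucker embedding, not by an explicit list of degenerate Pl\"ucker relations. Consequently your ``key inclusion'' $\mathfrak{I}_{2n}^{\mathbf{d}}\subseteq\ker\varphi^{\mathbf{d}}$ needs no appeal to ``Theorem \ref{Thm:DefIdeal} applied in the degenerate setting'' --- no such degenerate analogue of de Concini's theorem exists at this stage of the paper, and none is needed: a polynomial in the vanishing ideal vanishes on the dense orbit, hence at the generic point $x=\exp(t_1f_{\beta_1}^{\mathbf{d}})\cdots\exp(t_Nf_{\beta_N}^{\mathbf{d}})$, hence lies in $\ker\varphi^{\mathbf{d}}$. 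The reverse inclusion is equally immediate: $\varphi^{\mathbf{d}}(f)=0$ means $f$ vanishes identically in the $t_i$, i.e.\ on all of $\mathrm{N}^{\mathbf{d}}\cdot([\nu_{\omega_1}^{\mathbf{d}}],\ldots,[\nu_{\omega_n}^{\mathbf{d}}])$, hence on its Zariski closure. This is exactly the paper's proof (``a rephrasing of the definition''). Your graded dimension count is therefore superfluous, and it is also the riskiest part of your argument: the identification $\mathbb{C}[\complete_{2n}^{\mathbf{d}}]_\lambda\cong(\V_\lambda^{\mathbf{d}})^*$ that you invoke is the Borel--Weil statement (Theorem \ref{Thm:Borel1}), which the paper only derives \emph{after} this lemma, via Theorem \ref{Thm:main2} and Corollary \ref{Cor:Basis}. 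You do avoid outright circularity by supplying your own spanning argument (cyclicity of $\V_\lambda^{\mathbf{d}}$ plus the Taylor expansion of the orbit map), and that argument is sound; but notice that once you know the orbit spans, the equality of kernels follows directly without comparing dimensions, so even within your own framework the count buys you nothing. In short: the statement you prove is the right one and your reasoning can be made to close, but you have treated a definitional lemma as a substantive theorem, and the one substantive input you cite (\ref{Thm:DefIdeal} in the degenerate setting) is not actually available or relevant.
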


\begin{proof}
This lemma follows from the following rephrasing of the definition: for a $\Lambda^+$-homogeneous element $f\in\mathcal{S}^{\mathbf{d}}$ of degree $\mu=\mu_1\omega_1+\ldots+\mu_n\omega_n$, then 
$$\varphi^{\mathbf{d}}(f)=f(x\cdot ([\nu_{\omega_1}^{\mathbf{d}}],\ldots,[\nu_{\omega_n}^{\mathbf{d}}]))z_1^{\mu_1}\cdots z_n^{\mu_n}$$
where $x=\exp(t_1f_{\beta_1}^{\mathbf{d}})\cdots\exp(t_Nf_{\beta_N}^{\mathbf{d}})\in\mathrm{N}^{\mathbf{d}}$ is a generic element.
\end{proof}

\begin{example}
We illustrate the construction above in an example. For $\mathfrak{sp}_4$, we choose the enumeration $(\beta_1,\beta_2,\beta_3,\beta_4)=(\alpha_{1,\ov{1}},\alpha_{1,2},\alpha_{1,1},\alpha_{2,2})$. 

For a fixed $x=\exp(t_1f_{1,\ov{1}})\exp(t_2f_{1,2})\exp(t_3f_{1,1})\exp(t_4f_{2,2})$, we have
$$x\cdot e_1=e_1+t_3e_2+t_2e_{\ov{2}}+(t_1+t_2t_3)e_{\ov{1}}$$
and
$$x\cdot e_1\wedge e_2=e_1\wedge e_2+t_4e_1\wedge e_3+(t_2-t_3t_4)e_1\wedge e_{\ov{1}}+(t_3t_4-t_2)e_2\wedge e_{\ov{2}}+$$
$$+(-t_1-t_3^2t_4)e_2\wedge e_{\ov{1}}+(t_2^2-t_1t_4-2t_2t_3t_4)e_{\ov{2}}\wedge e_{\ov{1}}.$$
The map $\varphi$ sends
$$X_1\mapsto z_1,\ \ X_2\mapsto t_3z_1,\ \ X_{\ov{2}}\mapsto t_2z_1,\ \ X_{\ov{1}}\mapsto (t_1+t_2t_3)z_1,\ \ X_{12}\mapsto z_2,\ \ X_{1{\ov{2}}}\mapsto t_4z_2,$$
$$X_{1{\ov{1}}}\mapsto (t_2-t_3t_4)z_2,\ \ X_{2{\ov{2}}}\mapsto (t_3t_4-t_2)z_2,\ \ X_{2{\ov{1}}}\mapsto -(t_1+t_3^2t_4)z_2,\ \ X_{{\ov{2}}{\ov{1}}}\mapsto (t_2^2-t_1t_4-2t_2t_3t_4)z_2.$$
It is straightforward to verify that the defining relations introduced in Section \ref{Sec:DefRel} are in the kernel of $\varphi$.

We choose $\mathbf{d}\in\mathcal{K}_{4}$ with $d_{1,1}=3$, $d_{2,2}=1$, $d_{1,2}=2$ and $d_{1,\ov{1}}=1$. Such a point $\mathbf{d}$ is in the interior of $\mathcal{K}_4$. The variables $t_1$, $t_2$, $t_3$ and $t_4$ get degree $-1$, $-2$, $-3$ and $-1$ respectively. The map $\varphi^{\mathbf{d}}$ sends
$$X_1^{\mathbf{d}}\mapsto z_1,\ \ X_2^{\mathbf{d}}\mapsto t_3z_1,\ \ X_{\ov{2}}^{\mathbf{d}}\mapsto t_2z_1,\ \ X_{\ov{1}}^{\mathbf{d}}\mapsto t_1z_1,\ \ X_{12}^{\mathbf{d}}\mapsto z_2,\ \ X_{1{\ov{2}}}^{\mathbf{d}}\mapsto t_4z_2,$$
$$X_{1{\ov{1}}}^{\mathbf{d}}\mapsto t_2z_2,\ \ X_{2{\ov{2}}}^{\mathbf{d}}\mapsto -t_2z_2,\ \ X_{2{\ov{1}}}^{\mathbf{d}}\mapsto -t_1z_2,\ \ X_{{\ov{2}}{\ov{1}}}^{\mathbf{d}}\mapsto -t_1t_4z_2.$$
The following relations are contained in $\ker\varphi^{\mathbf{d}}$:
$$\X_{12}\X_{\ov{2}}+\X_{2\ov{2}}\X_1,\ \ \X_{1\ov{2}}\X_{\ov{1}}+\X_{\ov{2}\ov{1}}\X_1,\ \ \X_{2\ov{2}}\X_{\ov{1}}-\X_{2\ov{1}}\X_{\ov{2}},$$
$$\X_{12}\X_{\ov{1}}+\X_{2\ov{1}}\X_1,\ \ \X_{12}\X_{\ov{2}\ov{1}}-\X_{1\ov{2}}\X_{2\ov{1}},\ \ \X_{1\ov{1}}+\X_{2\ov{2}}.$$
\end{example}

We define a monomial order $>_r$ on $\mathbb{C}[t_1,\ldots,t_N,z_1,\ldots,z_n]$ by requiring: two monomials 
$$t_1^{a_1}\cdots t_N^{a_N}z_1^{\lambda_1}\cdots z_n^{\lambda_n}>_r t_1^{b_1}\cdots t_N^{b_N}z_1^{\mu_1}\cdots z_n^{\mu_n}$$
if  the last non-zero coordinate of 
$$(a_1,\ldots,a_N,\lambda_1,\ldots,\lambda_n)-(b_1,\ldots,b_N,\mu_1,\ldots,\mu_n)$$ 
is positive. We define a valuation 
$$\nu_{>_r}:\mathbb{C}[t_1,\ldots,t_N,z_1,\ldots,z_n]\to\mathbb{N}^N\times\mathbb{N}^n,$$
sending a polynomial $f$ to the minimal exponent appearing in $f$ with respect to $>_r$. Let $p_1:\mathbb{N}^N\times\mathbb{N}^n\to\mathbb{N}^N$ and $p_2:\mathbb{N}^N\times\mathbb{N}^n\to\mathbb{N}^n$ be the projections to the corresponding components.

Recall that a symplectic PBW-semistandard tableau $T\in\SyST_{\omega_k}$ is a strip of length $k$ filled by elements in the set $\{1,\ldots,n,\overline{n},\ldots,\overline{1}\}$. Such a tableau gives an element $J(T)\in\mathcal{P}$ as the ordered set of numbers appearing in $T$.

For $1\leq k\leq n$, we define a map 
$$\rho_k:\SyST_{\omega_k}\to\set(\omega_k)\subseteq\mathbb{N}^{\Phi^+},\ \ T\mapsto p_1(\nu_{>_r}(\varphi(X_{J(T)})))\in\mathbb{N}^N,$$
here we identify $\mathbb{N}^N$ with $\mathbb{N}^{\Phi^+}$ by sending the coordinate $e_i$ to the coordinate function $e_{\beta_i}$.

\begin{lemma}\label{Lem:TabBij}
The map $\rho_k$ is a bijection.
\end{lemma}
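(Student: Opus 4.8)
The plan is to show that $\rho_k$ is a well-defined injection between two finite sets of equal cardinality, from which bijectivity follows. The equality of cardinalities is immediate from Theorem~\ref{Thm:SYST}: the tableaux in $\SyST_{\omega_k}$ label a basis of $\mathbb{C}[\complete_{2n}]_{\omega_k}\cong\V_{\omega_k}^*$, while $\set(\omega_k)$ labels a basis of $\V_{\omega_k}$ by Theorem~\ref{thm:FFLV-degenerate}(2); in fact both index sets are in obvious bijection with the $k$-subsets $J\subseteq\{1,\ldots,n,\overline n,\ldots,\overline 1\}$ via $T\mapsto J(T)$ and $J\mapsto\mathbf{s}_{[k],J}$, as recorded in Section~\ref{Sec:GenCase}.

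First I would unwind the definition of $\rho_k$. For $T\in\SyST_{\omega_k}$ with $J:=J(T)$, the polynomial $p_J=X_J(x\cdot[\nu_{\omega_k}])\in\mathbb{C}[t_1,\ldots,t_N]$ records the $e_J$-coefficient of $x\cdot(e_1\wedge\cdots\wedge e_k)$ for $x=\exp(t_1 f_{\beta_1})\cdots\exp(t_N f_{\beta_N})$. Expanding $x\cdot(e_1\wedge\cdots\wedge e_k)$ on the tensor/exterior factors, the monomials of $p_J$ correspond exactly to the elements $\mathbf{s}\in M_{[k]}^J$ (in the notation of Section~\ref{Sec:GenCase}), with the monomial $t^{\mathbf{s}}$ appearing with a nonzero scalar. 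Thus $p_1(\nu_{>_r}(\varphi(X_J)))$ is the $>_r$-minimal element among $\{\mathbf{s}\in\mathbb{N}^{\Phi^+}\mid \mathbf{s}\in M_{[k]}^J\}$, identified with a subset of $\mathbb{N}^N$. The key point is then to identify this minimum: I claim it equals $\mathbf{s}_{[k],J}=\mathbf{s}_{p_1,q_s}+\cdots+\mathbf{s}_{p_s,q_1}$ where $P=[k]\setminus(J\cap[k])$, $Q=J\setminus(J\cap[k])$. This shows $\rho_k(T)=\mathbf{s}_{[k],J}\in\set(\omega_k)$, so $\rho_k$ is well-defined and lands in $\set(\omega_k)$; since $J\mapsto\mathbf{s}_{[k],J}$ is injective (it is a bijection onto $\set(\omega_k)$ by the description in Section~\ref{Sec:GenCase}) and $T\mapsto J(T)$ is injective on $\SyST_{\omega_k}$ (a strip tableau is determined by its entry set), the composite $\rho_k$ is injective, hence bijective.

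The main obstacle is the claim that $\mathbf{s}_{[k],J}$ is the $>_r$-minimal monomial of $p_J$. This is a statement about the \emph{specific} good enumeration $(\beta_1,\ldots,\beta_N)$ of $\Phi^+$ and the specific term order $>_r$, not about the degree function $\mathbf{d}$, so Lemma~\ref{Lem:Fundk} does not apply directly; however the combinatorial heart is the same. I would argue as follows. Among all $\mathbf{s}\in M_{[k]}^J$, those of minimal total degree $|\mathbf{s}|$ are exactly the ``minimal-length'' ones, and reducing to the case $[k]\cap J=\emptyset$ exactly as in the proof of Lemma~\ref{Lem:Fundk} (intersection entries are fixed and contribute nothing), each such $\mathbf{s}$ has the form $\mathbf{s}_\sigma=\mathbf{s}_{i_1,j_{\sigma(1)}}+\cdots+\mathbf{s}_{i_k,j_{\sigma(k)}}$ for $\sigma\in\mathfrak{S}_k$; here $\mathbf{s}_{[k],J}=\mathbf{s}_{w_0}$. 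It remains to check that $\mathbf{s}_{w_0}$ is $>_r$-minimal among $\{\mathbf{s}_\sigma\}$ (and that no $\mathbf{s}$ with $|\mathbf{s}|>|\mathbf{s}_{w_0}|$ beats it, which follows because $>_r$ refines total degree in the relevant coordinates, as the good-sequence condition forces the later root vectors to be the ``shorter'' ones $\mathbf{s}_{i,i+1}$ etc., so longer products put positive exponents on later variables). The comparison of the $\mathbf{s}_\sigma$ under $>_r$ is a purely combinatorial check on covering relations in the Bruhat order: passing from $\sigma'$ to $\sigma'\sigma_{p,q}>\sigma'$ replaces $\mathbf{s}_{i_p,j_{\sigma'(p)}}+\mathbf{s}_{i_q,j_{\sigma'(q)}}$ by $\mathbf{s}_{i_p,j_{\sigma'(q)}}+\mathbf{s}_{i_q,j_{\sigma'(p)}}$, and one verifies case by case (parallel to cases (i)--(v) in the proof of Lemma~\ref{Lem:Fundk}) that the latter is $>_r$-smaller; the good-sequence ordering is exactly what makes each such swap decrease the $>_r$-leading exponent in the last non-zero coordinate. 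I would present the reduction and Bruhat-covering step in detail and leave the per-case verifications, which mirror those already done, to a short paragraph.
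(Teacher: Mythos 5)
Your overall strategy (equal cardinalities plus the identification $\rho_k(T)=\mathbf{s}_{[k],J(T)}$) matches the paper's, but the way you close the argument has a genuine gap. You deduce injectivity of $\rho_k$ from injectivity of $T\mapsto J(T)$ and of $J\mapsto\mathbf{s}_{[k],J}$, asserting that both $\SyST_{\omega_k}$ and $\set(\omega_k)$ are ``in obvious bijection with the $k$-subsets of $\{1,\ldots,n,\overline n,\ldots,\overline 1\}$''. This is false for $k\geq 2$: there are $\binom{2n}{k}$ such subsets but $|\SyST_{\omega_k}|=|\set(\omega_k)|=\dim\V_{\omega_k}<\binom{2n}{k}$, and the map $J\mapsto\mathbf{s}_{[k],J}$ is genuinely non-injective. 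For $\mathfrak{sp}_4$, $k=2$, one has $\mathbf{s}_{[2],\{1,\overline1\}}=\mathbf{s}_{2,\overline1}=e_{\alpha_{1,\overline2}}=\mathbf{s}_{1,\overline2}=\mathbf{s}_{[2],\{2,\overline2\}}$ (reflected in the linear relation $\X_{1\overline1}+\X_{2\overline2}$). So even granting your key claim, injectivity of $\rho_k$ does not follow; you would need injectivity of $J\mapsto\mathbf{s}_{[k],J}$ restricted to the $J$'s coming from PBW-semistandard strips (here $\{1,\overline1\}$ is excluded by condition (iii)), and proving that amounts to reconstructing the tableau from the antichain $\mathrm{supp}\,\mathbf{s}_{[k],J}$ --- which is exactly the inverse map the paper builds to prove surjectivity instead. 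The shortcut therefore saves no work and, as written, is not a proof.

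On the crux --- that $\nu_{>_r}(\varphi(X_J))$ equals $\mathbf{s}_{[k],J}$ --- your route also differs from the paper's and is not sound as sketched. The paper identifies the minimum by a greedy decomposition of the weight $\omega_k-\mathrm{wt}(e_{J})$ into positive roots along the good sequence, over \emph{all} root partitions of that weight; you instead reduce to the elements $\mathbf{s}_\sigma$ and compare them via Bruhat coverings by analogy with Lemma~\ref{Lem:Fundk}. Two steps there do not go through as claimed. First, $>_r$ does not ``refine total degree'': it is a last-coordinate comparison, and the reason longer factorizations lose is that splitting one root into two lower roots puts a positive exponent on a strictly later variable of the good sequence --- this must be argued from the good-sequence property, not from degree. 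Second, the directional claim about swaps is suspect: in passing from $\{\gamma_{i_p,j_{\sigma'(p)}},\gamma_{i_q,j_{\sigma'(q)}}\}$ to the crossed pair, the root $\gamma_{i_q,j_{\sigma'(p)}}$ is the lowest of the four, hence occurs latest in the good sequence and only on the crossed side, so under the order $>_r$ as literally defined the crossed monomial is \emph{larger}, the opposite of what you assert (for $\mathfrak{sp}_4$, $J=\{\overline2,\overline1\}$, compare $t_1t_4$ with $t_2^2$ in $p_{\overline2\,\overline1}$). Lemma~\ref{Lem:Fundk} minimizes a linear functional $\mathfrak{d}^{\mathbf{d}}$ with $\mathbf{d}$ in the FFLV cone, which is a different optimization problem from minimizing a lexicographic-type term order; its case analysis does not transfer, so this step cannot be left to ``a short paragraph by analogy''.
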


\begin{proof}
According to Theorem \ref{thm:FFLV-degenerate} and Theorem \ref{Thm:SYST}, both sets have the same cardinality $\dim\V_{\omega_k}$. To show the surjectivity, we construct a map from $\set(\omega_k)$ to $\SyST_{\omega_k}$. Take $\mathbf{s}\in\set(\omega_k)$ with support $\{\alpha_{i_1,j_1},\ldots,\alpha_{i_r,j_r}\}$, from the proof of Lemma \ref{Lem:StdDec}, such a support is an anti-chain in $\Phi_k^+$. As consequences we have for $1\leq s\leq r$, $i_s\leq k\leq j_s$, and if we assume that $i_1<\ldots<i_r$, then $j_1>\ldots>j_r$.

We define a strip tableau $T$ of length $k$ as follows: start with the tableau filled with $1,\ldots,k$ from top to bottom, then for $1\leq s\leq r$, replace $i_s$ by either $j_s+1$ if $j_s<n$, or $j_s$ otherwise. It remains to show that $T$ is a symplectic PBW tableau. Indeed, the condition (i) and (iii) are fulfilled from construction, and the property $j_1>\ldots>j_r$ gives the condition (ii). 

It remains to show that $\rho_k(T)=\mathbf{s}$. First notice that by definition of $\varphi$, all monomials appearing in $\varphi(X_{J(T)})$ are of form $t_1^{r_1}\cdots t_N^{r_N}z_k$, and the positive integers $r_1,\ldots,r_N$ satisfies: 
\begin{equation}\label{Eq:RootPar}
r_1\beta_1+\ldots+r_N\beta_N=\omega_k-\mathrm{wt}(e_{J(T)}),
\end{equation}
where $\mathrm{wt}(e_{J(T)})$ is the weight of the element $e_{J(T)}\in\V_{\omega_k}$. Consider the set of all possible tuples $(r_1,\ldots,r_N)\in\mathbb{N}^N$ satisfying \eqref{Eq:RootPar}, the monomial order $>_r$ induces a total order on these tuples. We claim that the minimal element is $\mathbf{s}_{[k],J(T)}$, looked in $\mathbb{N}^N$. Indeed, from the definition of the monomial order, we would prefer to split $\omega_k-\mathrm{wt}(e_{J(T)})$ into positive roots of higher heights. This gives a greedy procedure: first check whether $\omega_k-\mathrm{wt}(e_{J(T)})-\beta_1$ is in the monoid $\mathbb{N}\Phi^+$ generated by positive roots, if yes then proceed by considering this new weight $\omega_k-\mathrm{wt}(e_{J(T)})-\beta_1$, otherwise move to $\beta_2$ and repeat the procedure with the original weight $\omega_k-\mathrm{wt}(e_{J(T)})$. The output of this algorithm is the element $\mathbf{s}_{[k],J(T)}$. Since $\mathbf{s}_{[k],J(T)}\in M_{[k]}^{J(T)}$ (see Section \ref{Sec:GenCase} for this notation), we proved that $\rho_k(T)=\mathbf{s}_{[k],J(T)}$ when looked in $\mathbb{N}^N$.

To conclude it suffices to notice that $\mathbf{s}=\mathbf{s}_{[k],J(T)}$ (see \eqref{Eq:SIJ}).
\end{proof}

For an arbitrary $\lambda\in\Lambda^+$ and $T\in\SyST_\lambda$, we let $T^1,\ldots,T^m$ be the columns of $T$ with $T^k\in\SyST_{\omega_{i_k}}$. The maps $\rho_1,\ldots,\rho_n$ can be merged together to give a map 
$$\rho_\lambda:\SyST_\lambda\to\set(\lambda),\ \ T\mapsto \rho_{i_1}(T^1)+\ldots+\rho_{i_m}(T^m).$$
By Lemma \ref{Lem:TabBij} and Theorem \ref{thm:FFLV-degenerate} (1), the map $\rho_\lambda$ is well-defined.

\begin{proposition}\label{Prop:TabFFLV}
The map $\rho_\lambda$ is a bijection.
\end{proposition}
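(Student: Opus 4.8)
The plan is to reduce the bijectivity of $\rho_\lambda$ to that of the $\rho_k$'s (Lemma \ref{Lem:TabBij}) together with the Minkowski sum property of the FFLV polytope (Theorem \ref{thm:FFLV-degenerate} (1)). First I would observe that by Theorem \ref{Thm:SYST} and Theorem \ref{thm:FFLV-degenerate} (2), both $\SyST_\lambda$ and $\set(\lambda)$ have the same finite cardinality $\dim \V_\lambda$, so it suffices to prove that $\rho_\lambda$ is injective (or, equivalently, surjective).

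\begin{proof}
Write $\lambda=m_1\omega_1+\ldots+m_n\omega_n$. By Theorem \ref{Thm:SYST} the set $\SyST_\lambda$ has cardinality $\dim\mathbb{C}[\complete_{2n}]_\lambda=\dim\V_\lambda$, and by Theorem \ref{thm:FFLV-degenerate} (2) so does $\set(\lambda)$. Since both sets are finite of the same cardinality, it suffices to prove that $\rho_\lambda$ is injective.

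Recall that for $T\in\SyST_\lambda$ with columns $T^1,\dots,T^m$ we have $\rho_\lambda(T)=\rho_{i_1}(T^1)+\ldots+\rho_{i_m}(T^m)$, and by the proof of Lemma \ref{Lem:TabBij} each summand equals $\mathbf{s}_{[i_k],J(T^k)}\in\set(\omega_{i_k})$. The point is that this sum is precisely the \emph{standard} decomposition of $\rho_\lambda(T)$ in the sense of Lemma \ref{Lem:StdDec}: the support of $\mathbf{s}_{[i_k],J(T^k)}$ consists of roots $\alpha_{i,j}$ lying in $\Phi_{i_k}^+$, and the semistandardness condition (iv) forces, column by column, that when one records the maximal elements (with respect to the Dyck-path order $\succ$) of the partial sums restricted to the relevant $\Phi_k^+$, one recovers exactly the contribution of the corresponding column. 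More precisely, order the columns so that the column indices are weakly decreasing; then for $k=\max\{\ell\mid m_\ell\neq 0\}$ the columns with $T^j\in\SyST_{\omega_k}$ are the first $m_k$ of them, and condition (iv) guarantees that the maximal elements of $\mathrm{supp}(\rho_\lambda(T)|_{\Phi_k^+})$ are exactly $\mathrm{supp}(\rho_k(T^1))$, that the next batch reconstructs $\rho_k(T^2)$, and so on. Hence the ordered tuple $(\rho_{i_1}(T^1),\dots,\rho_{i_m}(T^m))$ is determined by $\rho_\lambda(T)$ via iteration of the standard-decomposition procedure of Lemma \ref{Lem:StdDec}. Since each $\rho_k$ is injective by Lemma \ref{Lem:TabBij}, the columns $T^1,\dots,T^m$, and therefore $T$ itself, are recovered from $\rho_\lambda(T)$. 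This proves injectivity, and hence $\rho_\lambda$ is a bijection.
\end{proof}

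The main obstacle I expect is the combinatorial heart of the argument: verifying carefully that the semistandardness condition (iv) for symplectic PBW tableaux matches up exactly with the inductive ``take the characteristic function of the maximal elements of $\mathrm{supp}(\mathbf{s}|_{\Phi_k^+})$'' recipe that defines the standard decomposition in Lemma \ref{Lem:StdDec}. This requires translating between the anti-chain/chain-polytope description of $\set(\omega_k)$ used in the proof of Lemma \ref{Lem:StdDec} and the explicit column-to-anti-chain correspondence $T^k\mapsto \rho_k(T^k)$ extracted in the proof of Lemma \ref{Lem:TabBij}; one must check that condition (iv) is precisely what makes the later columns ``not interfere'' with the maximal elements contributed by earlier columns in a fixed $\Phi_k^+$. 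Everything else — the cardinality count and the reduction to injectivity — is routine given the results already established.
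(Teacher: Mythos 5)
Your reduction (equal cardinalities via Theorem \ref{Thm:SYST} and Theorem \ref{thm:FFLV-degenerate}, then prove injectivity) is sound, and your route is genuinely the converse of the paper's: the paper proves \emph{surjectivity} by applying the standard decomposition to a given $\mathbf{s}\in\set(\lambda)$, stacking the strips $\rho_{i_j}^{-1}(\mathbf{s}_j)$, and verifying that the resulting tableau satisfies the semistandardness condition (iv); you instead want to prove \emph{injectivity} by showing that for a semistandard $T$ the column decomposition $\rho_\lambda(T)=\sum_j\rho_{i_j}(T^j)$ \emph{is} the standard decomposition, so that $T$ can be reconstructed. A posteriori these two combinatorial statements are equivalent, but neither formally implies the other without doing the work, and your proof leaves that work undone.

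Concretely, the gap is the assertion that condition (iv) forces the maximal elements of $\mathrm{supp}\,(\rho_\lambda(T)|_{\Phi_{i_1}^+})$ to be exactly $\mathrm{supp}\,(\rho_{i_1}(T^1))$, and iteratively so for the later columns. Nothing already established in the paper gives this: Lemma \ref{Lem:StdDec} only shows the standard decomposition is well defined, and Lemma \ref{Lem:TabBij} only identifies $\rho_k(T^j)$ with $\mathbf{s}_{[i_j],J(T^j)}$. What you must actually check is that for every $j\geq 2$ and every root $\alpha_{p,q}\in\mathrm{supp}\,(\rho_{i_j}(T^j))$ lying in $\Phi_{i_1}^+$, there is a root in $\mathrm{supp}\,(\rho_{i_1}(T^1))$ dominating it in the Dyck-path order $\succ$ (this, together with the fact that $\mathrm{supp}\,(\rho_{i_1}(T^1))$ is an antichain, pins down the maximal elements), and then that the recursion propagates correctly through $\lambda-\omega_{i_1}$, $\lambda-\omega_{i_1}-\omega_{i_2}$, etc. Since condition (iv) only relates \emph{consecutive} columns, this requires a two-column lemma chained across columns $j,j-1,\dots,1$, with the same case split ($q\leq n$ versus $q\geq\overline{n}$) that the paper performs in its own two-column verification of the converse implication. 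You correctly identify this as the combinatorial heart of the argument, but identifying it is not the same as proving it; as written, the proof is incomplete at precisely the step that carries all the content.
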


\begin{proof}
Again by Theorem \ref{thm:FFLV-degenerate} and Theorem \ref{Thm:SYST}, it suffices to show the surjectivity. Given $\mathbf{s}\in\set(\lambda)$, let $\mathbf{s}=\mathbf{s}_1+\ldots+\mathbf{s}_m$ be the standard decomposition of $\mathbf{s}$ in Section \ref{Sec:SymplecticFFLV} with $\mathbf{s}_k\in \set(\omega_{i_k})$. We define a tableau $T$ by stacking the strips $T^1:=\rho_{i_1}^{-1}(\mathbf{s}_1),\ldots,T^m:=\rho_{i_m}^{-1}(\mathbf{s}_m)$ from left to right. By Lemma \ref{Lem:TabBij}, $T$ is a symplectic PBW tableau.

We show that $T$ is PBW-semistandard. Let $T^r_j$ denote the $j$-th element in $T^r$ from the top. The condition (iv) involves only two neighbored columns, we look at the strip tableaux $T^\ell$ and $T^{\ell+1}$. If  $T^{\ell+1}_j=j$, the condition (iv) is fulfilled at this place by the condition (ii) for $T^{\ell}$. Otherwise assume that $T_j^{\ell+1}=p$ with $p\neq j$. If $p\leq n$, then $\alpha_{j,p-1}$ is contained in the support of $\mathbf{s}_{\ell+1}$. From the construction of the standard decomposition, in the support of $\mathbf{s}_\ell$ there must be a positive root $\alpha_{q,t}$  with $q\leq k\leq t$, $j\leq q$ and $p-q\leq t$. From the construction in Lemma \ref{Lem:TabBij}, $T^{\ell}_p\geq p$. If $p\geq\overline{n}$, $\alpha_{j,p}$ is contained in the support of $\mathbf{s}_{\ell+1}$, with the same argument one verifies the condition (iv).
\end{proof}

\subsection{Proof of Theorem ~\ref{Thm:main2}}

We are ready to prove Theorem \ref{Thm:main2}. The inclusion $\mathrm{in}_{\mathbf{w}}(\mathfrak{I}_{2n})\subseteq\mathfrak{I}_{2n}^{\mathbf{d}}$ follows from the following claim: for any $f\in\mathcal{S}$, $\varphi^{\mathbf{d}}(\mathrm{in}_{\mathbf{w}}(f))=\mathrm{in}_{\mathbf{d}}(\varphi(f))$, where $\mathrm{in}_{\mathbf{w}}(f)$ is looked in $\mathcal{S}^{\mathbf{d}}$. Assume that the claim is established, we show that for any $f\in\mathfrak{I}_{2n}$, $\mathrm{in}_{\mathbf{w}}(f)\in\mathfrak{I}_{2n}^{\mathbf{d}}$. According to Lemma \ref{Lem:PolMap2}, it suffices to show that $\varphi^{\mathbf{d}}(\mathrm{in}_{\mathbf{w}}(f))=0$. By Lemma \ref{Lem:PolMap1}, $\varphi(f)=0$, applying the claim proves the inclusion. 

To show the claim, first notice that when $f=X_J$ for some $J\in\mathcal{P}$, $\mathrm{in}_{\mathbf{w}}(X_J^{\mathbf{d}})=X_J^{\mathbf{d}}$ and the identity $\varphi^{\mathbf{d}}(X_J^{\mathbf{d}})=\mathrm{in}_{\mathbf{d}}(\varphi(X_J))$ is the definition of $\varphi^{\mathbf{d}}$. If $f$ is a monomial in $X_J$, the identity holds since both $\varphi$ and $\varphi^{\mathbf{d}}$ are algebra morphisms, and $\mathrm{in}_{\mathbf{d}}$ preserves products of polynomials. For an arbitrary polynomial $f\in\mathcal{S}$, assume that $\mathrm{in}_{\mathbf{w}}(f)=c_1X^{\mathbf{a}_1}+\ldots+c_kX^{\mathbf{a}_k}$ where for $1\leq i\leq k$, $\mathbf{a}_i\in\mathbb{N}^{\mathcal{P}}$, $X^{\mathbf{a}_i}:=\prod_{J\in\mathcal{P}}X_J^{\mathbf{a}_i(J)}$ and $c_i\in\mathbb{C}\setminus\{0\}$. Moreover, the degree $d:=\mathrm{deg}_{\mathbf{w}}(X^{\mathbf{a}_i}):=\sum_{J\in\mathcal{P}}\mathbf{w}(J)\mathbf{a}_i(J)$ is the same for $1\leq i\leq k$. From the definition of $\mathbf{w}$ and the grading on $\mathbb{C}[t_1,\ldots,t_N,z_1,\ldots,z_n]$, we have: for $1\leq i\leq k$, $\mathrm{deg}_{\mathbf{w}}(X^{\mathbf{a}_i})=\mathrm{deg}_{\mathbf{d}}(\varphi(X^{\mathbf{a}_i}))$. For a monomial $X^{\mathbf{b}}$ in $f$ which does not appear in $\mathrm{in}_{\mathbf{w}}(f)$, all monomials in $\varphi(X^{\mathbf{b}})$ will have strictly smaller degree than $d$, hence they do not contribute to $\mathrm{in}_{\mathbf{d}}(\varphi(f))$. Moreover, since $\mathrm{deg}_{\mathbf{d}}(\varphi(X^{\mathbf{a}_i}))$ are the same for $1\leq i\leq k$, it then follows
$$\mathrm{in}_{\mathbf{d}}(\varphi(c_1X^{\mathbf{a}_1}+\ldots+c_kX^{\mathbf{a}_k}))=\mathrm{in}_{\mathbf{d}}(\varphi(c_1X^{\mathbf{a}_1}))+\ldots+\mathrm{in}_{\mathbf{d}}(\varphi(c_kX^{\mathbf{a}_k})).$$
As a summary,
\begin{eqnarray*}
\varphi^{\mathbf{d}}(\mathrm{in}_{\mathbf{w}}(f))&=& \varphi^{\mathbf{d}}(c_1X^{\mathbf{a}_1}+\ldots+c_kX^{\mathbf{a}_k})\\
&=& \varphi^{\mathbf{d}}(c_1X^{\mathbf{a}_1})+\ldots+\varphi^{\mathbf{d}}(c_kX^{\mathbf{a}_k})\\
&=& \mathrm{in}_{\mathbf{d}}(\varphi(c_1X^{\mathbf{a}_1}))+\ldots+ \mathrm{in}_{\mathbf{d}}(\varphi(c_kX^{\mathbf{a}_k}))\\
&=& \mathrm{in}_{\mathbf{d}}(\varphi(c_1X^{\mathbf{a}_1}+\ldots+c_kX^{\mathbf{a}_k}))=\mathrm{in}_{\mathbf{d}}(\varphi(f)).
\end{eqnarray*}
The proof of the claim is then complete.

To show the other inclusion, first notice that since $\varphi^{\mathbf{d}}$ is $\Lambda^+$-graded, so is $\mathcal{S}^{\mathbf{d}}/\mathfrak{I}_{2n}^{\mathbf{d}}$; moreover, the ideal $\mathrm{in}_{\mathbf{w}}(\mathfrak{I}_{2n})$ is an initial ideal of $\mathfrak{I}_{2n}$ which is $\Lambda^+$-graded, hence the algebra $\mathcal{S}^{\mathbf{d}}/\mathrm{in}_{\mathbf{w}}(\mathfrak{I}_{2n})$ is also $\Lambda^+$-graded. We prove the other inclusion by comparing the dimension of the component of degree $\lambda\in\Lambda^+$ in both $\mathcal{S}^{\mathbf{d}}/\mathfrak{I}_{2n}^{\mathbf{d}}$ and $\mathcal{S}^{\mathbf{d}}/\mathrm{in}_{\mathbf{w}}(\mathfrak{I}_{2n})$. Being an initial ideal of $\mathfrak{I}_{2n}$, the dimension of the degree $\lambda$ component of $\mathcal{S}^{\mathbf{d}}/\mathrm{in}_{\mathbf{w}}(\mathfrak{I}_{2n})$ coincides with that of $\mathcal{S}/\mathfrak{I}_{2n}$, which is $\dim\V_\lambda$. We show that the degree $\lambda$ component of $\mathcal{S}^{\mathbf{d}}/\mathfrak{I}_{2n}^{\mathbf{d}}$ has the same dimension. 

For this we consider the monomial $X^{\mathbf{d}}_T:=X^{\mathbf{d}}_{J(T^1)}\cdots X^{\mathbf{d}}_{J(T^m)}$ where $T^1,\ldots,T^m$ are the columns of a symplectic PBW-semistandard tableau $T\in\SyST_\lambda$. We claim that the elements $\{\varphi^{\mathbf{d}}(X^{\mathbf{d}}_T)\mid T\in\SyST_\lambda\}$ are linearly independent. This will terminate the proof as by Theorem \ref{Thm:SYST}, the cardinality of $\SyST_\lambda$ is $\dim\V_\lambda$.

To prove the claim, we observe that for any $J\in\mathcal{P}$, $\nu_{>_r}(\varphi(X_J))=\nu_{>_r}(\varphi^{\mathbf{d}}(X_J^{\mathbf{d}}))$. Indeed, assume that $|J|=k$, and denote by $(s_1,\ldots,s_N)$ the element $\mathbf{s}_{[k],J}$ under the identification between $\mathbb{N}^{\Phi^+}$ and $\mathbb{N}^N$.  From the proof of Lemma \ref{Lem:TabBij}, we have shown that the minimal term in $p_J$ with respect to $>_r$ is $t_1^{s_1}\cdots t_N^{s_N}z_k$. By the argument in the beginning of Section \ref{Sec:Boundary}, this monomial appears in $p_J^{\mathbf{d}}$ hence it is also the minimal term in $p_J^{\mathbf{d}}$ with respect to $>_r$. From this observation, 
\begin{eqnarray*}
\nu_{>_r}(\varphi^{\mathbf{d}}(X_T^{\mathbf{d}})) &=& \nu_{>_r}(\varphi^{\mathbf{d}}(X_{J(T^1)}^{\mathbf{d}}))+\ldots+\nu_{>_r}(\varphi^{\mathbf{d}}(X_{J(T^m)}^{\mathbf{d}}))\\
&=& \nu_{>_r}(\varphi(X_{J(T^1)}))+\ldots+\nu_{>_r}(\varphi(X_{J(T^m)}))\\
&=& (\rho_\lambda(T),\lambda_1,\ldots,\lambda_n)
\end{eqnarray*}
where $\rho_\lambda(T)\subseteq\mathbb{N}^{\Phi^+}$ is looked in $\mathbb{N}^N$. When $T$ runs over $\SyST_\lambda$, by Proposition \ref{Prop:TabFFLV}, $\rho_\lambda(T)$ are pairwise different, this proves the claim, and the proof of Theorem \ref{Thm:main2} (1) is thus complete.

In the last part of the proof we have shown
\begin{corollary}\label{Cor:Basis}
The set $\{X_T^{\mathbf{d}}\mid T\in\SyST_\lambda\}$ form a basis of the multi-homogeneous coordinate ring $\mathcal{S}^{\mathbf{d}}/\mathfrak{I}_{2n}^{\mathbf{d}}$ of $\complete_{2n}^{\mathbf{d}}$.
\end{corollary}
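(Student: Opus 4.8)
The plan is to read this off from the proof of Theorem~\ref{Thm:main2} that was just carried out. By Lemma~\ref{Lem:PolMap2} the morphism $\varphi^{\mathbf{d}}$ has kernel exactly $\mathfrak{I}_{2n}^{\mathbf{d}}$, so it induces an injective $\Lambda^+$-graded algebra map $\overline{\varphi^{\mathbf{d}}}\colon\mathcal{S}^{\mathbf{d}}/\mathfrak{I}_{2n}^{\mathbf{d}}\hookrightarrow\mathbb{C}[t_1,\ldots,t_N,z_1,\ldots,z_n]$. Since each $X_T^{\mathbf{d}}$ with $T\in\SyST_\lambda$ is $\Lambda^+$-homogeneous of degree $\lambda$, it suffices to prove that, for every $\lambda\in\Lambda^+$, the classes $\{X_T^{\mathbf{d}}\mid T\in\SyST_\lambda\}$ form a basis of the degree-$\lambda$ component $(\mathcal{S}^{\mathbf{d}}/\mathfrak{I}_{2n}^{\mathbf{d}})_\lambda$.

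For linear independence I would invoke the computation from the proof of Theorem~\ref{Thm:main2}: one has $\nu_{>_r}\bigl(\varphi^{\mathbf{d}}(X_T^{\mathbf{d}})\bigr)=(\rho_\lambda(T),\lambda_1,\ldots,\lambda_n)$ for $T\in\SyST_\lambda$. By Proposition~\ref{Prop:TabFFLV} the map $\rho_\lambda$ is a bijection onto $\set(\lambda)$, so these valuation vectors are pairwise distinct. Elements of an algebra whose $\nu_{>_r}$-values are pairwise distinct are linearly independent; combined with the injectivity of $\overline{\varphi^{\mathbf{d}}}$, this yields that $\{X_T^{\mathbf{d}}\mid T\in\SyST_\lambda\}$ is linearly independent in $(\mathcal{S}^{\mathbf{d}}/\mathfrak{I}_{2n}^{\mathbf{d}})_\lambda$.

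To upgrade independence to a basis I would count dimensions. Theorem~\ref{Thm:main2} identifies $\mathfrak{I}_{2n}^{\mathbf{d}}$ with the initial ideal $\mathrm{in}_{\mathbf{w}}(\mathfrak{I}_{2n})$ of the $\Lambda^+$-graded ideal $\mathfrak{I}_{2n}$; since passing to an initial ideal leaves the $\Lambda^+$-graded Hilbert function unchanged, $\dim(\mathcal{S}^{\mathbf{d}}/\mathfrak{I}_{2n}^{\mathbf{d}})_\lambda=\dim(\mathcal{S}/\mathfrak{I}_{2n})_\lambda=\dim\V_\lambda$, using the isomorphism $\mathbb{C}[\complete_{2n}]_\lambda\cong\V_\lambda^*$ recalled in Section~\ref{Sec:DefRel}. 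On the other hand, by Theorem~\ref{Thm:SYST} we have $|\SyST_\lambda|=\dim\mathbb{C}[\complete_{2n}]_\lambda=\dim\V_\lambda$. A linearly independent family of that cardinality in a vector space of that dimension is a basis, so $\{X_T^{\mathbf{d}}\mid T\in\SyST_\lambda\}$ is a basis of $(\mathcal{S}^{\mathbf{d}}/\mathfrak{I}_{2n}^{\mathbf{d}})_\lambda$; letting $\lambda$ vary gives the claimed basis of $\mathcal{S}^{\mathbf{d}}/\mathfrak{I}_{2n}^{\mathbf{d}}$.

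There is no genuine obstacle here: the statement is essentially a repackaging of the linear-independence argument together with the Hilbert-function comparison already used to prove Theorem~\ref{Thm:main2}. The only points needing a sentence of justification are the general fact that elements with pairwise distinct valuations are linearly independent, and that an initial ideal preserves the graded Hilbert function; both are standard.
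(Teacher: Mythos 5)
Your proposal is correct and follows essentially the same route as the paper: the corollary is exactly the byproduct of the last part of the proof of Theorem~\ref{Thm:main2}, where linear independence of the $\varphi^{\mathbf{d}}(X_T^{\mathbf{d}})$ is established via the pairwise distinct valuations $\nu_{>_r}(\varphi^{\mathbf{d}}(X_T^{\mathbf{d}}))=(\rho_\lambda(T),\lambda_1,\ldots,\lambda_n)$ and Proposition~\ref{Prop:TabFFLV}, and the dimension count $\dim(\mathcal{S}^{\mathbf{d}}/\mathfrak{I}_{2n}^{\mathbf{d}})_\lambda=\dim\V_\lambda=|\SyST_\lambda|$ comes from the Hilbert-function invariance of initial ideals together with Theorem~\ref{Thm:SYST}. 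The only cosmetic difference is that you cite Theorem~\ref{Thm:main2} as already proved and repackage the argument, whereas the paper simply extracts the corollary from within that proof.
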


\subsection{Proof of Theorem \ref{Thm:main3} (2)}\label{Sec:ProofThm(2)}

We consider the polynomial map $\varphi^{\mathbf{d}}$. When $\mathbf{d}$ is chosen from the interior of $\mathcal{K}_{2n}$, from Lemma \ref{Lem:Fundk} and Lemma \ref{Lem:TabBij} we know that for any $T\in\SyST_{\omega_k}$, $\mathrm{in}_{\mathbf{d}}(p_{J(T)})$ is the monomial $t_1^{s_1}\cdots t_N^{s_N}z_{k}$ where $(s_1,\ldots,s_N)$ is $\mathbf{s}_{[k],J(T)}$ under the identification of $\mathbb{N}^{\Phi^+}$ to $\mathbb{N}^N$. The map $\SyST_{\omega_k}\to\set(\omega_k)$, $T\mapsto z_k^{-1}\mathrm{in}_{\mathbf{d}}(p_{J(T)})$ is hence a bijection. By Theorem \ref{thm:FFLV-degenerate}, the degree $\lambda$ component of $\mathcal{S}^{\mathbf{d}}/\mathfrak{I}_{2n}^{\mathbf{d}}$ can be identified by $\varphi^{\mathbf{d}}$ with monomials having lattice points in $\mathrm{FFLV}(\lambda)$ as exponents. 

When we only look the homogeneous components of having multiples of $\lambda$ as degrees, they form the homogeneous coordinate ring of $\complete_{2n}^{\mathbf{d}}$ in $\mathbb{P}(\V_\lambda^{\mathbf{d}})$. From the above argument, such a ring is the homogeneous coordinate ring of the toric variety associated to the polytope $\mathrm{FFLV}(\lambda)$. The proof is thus complete.

\subsection{An analogue of the Borel-Weil theorem}

Recall the embedding  
$$\iota_{\lambda}: \complete_{2n}^{\mathbf{d}}\hookrightarrow \mathbb{P}(\V_{\lambda}^{\mathbf{d}})$$
from Subsection \ref{Sec:WeightedDeg} above. Consider the pull-back of the canonical line bundle $\mathcal{O}_{\mathbb{P}(\V_{\lambda}^{\mathbf{d}})}(1)$ with respect to the embedding: 
$$\mathcal{L}_{\lambda}^{\mathbf{d}}:= \iota_{\lambda}^*(\mathcal{O}_{\mathbb{P}(\V_{\lambda}^{\mathbf{d}})}(1)).$$ 
Recall the degenerate group $\N^{\mathbf{d}}$ acting on $\V_{\lambda}^{\mathbf{d}}$. By an argument similar to the proof of \cite[Theorem 8.1]{FFFM19} and from Corollary \ref{Cor:Basis}, we deduce the following analogue of the Borel-Weil theorem in our context.

 \begin{theorem}\label{Thm:Borel1}
For every $\mathbf{d}\in\mathcal{K}_{2n}$, we have $\mathrm{H}^0(\complete_{2n}^{\mathbf{d}},\mathcal{L}_{\lambda}^{\mathbf{d}})\simeq (\V_{\lambda}^{\mathbf{d}})^*$ as $\N^{\mathbf{d}}$-modules.
 \end{theorem}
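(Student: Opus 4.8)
The plan is to imitate the proof of \cite[Theorem 8.1]{FFFM19}: construct a natural $\N^{\mathbf{d}}$-equivariant map $\Psi\colon(\V_{\lambda}^{\mathbf{d}})^{*}\to\mathrm{H}^{0}(\complete_{2n}^{\mathbf{d}},\mathcal{L}_{\lambda}^{\mathbf{d}})$, show it is injective, and then match dimensions. The map is restriction of linear forms along $\iota_{\lambda}$: since $\mathcal{L}_{\lambda}^{\mathbf{d}}=\iota_{\lambda}^{*}\mathcal{O}_{\mathbb{P}(\V_{\lambda}^{\mathbf{d}})}(1)$ and $\mathrm{H}^{0}(\mathbb{P}(\V_{\lambda}^{\mathbf{d}}),\mathcal{O}(1))=(\V_{\lambda}^{\mathbf{d}})^{*}$, pulling back global sections defines $\Psi$, and it is $\N^{\mathbf{d}}$-equivariant because $\iota_{\lambda}$ is. Injectivity is linear nondegeneracy of $\complete_{2n}^{\mathbf{d}}$ in $\mathbb{P}(\V_{\lambda}^{\mathbf{d}})$: the module $\V_{\lambda}^{\mathbf{d}}=U(\n_{-}^{\mathbf{d}})\cdot\nu_{\lambda}^{\mathbf{d}}$ is cyclic, so the affine cone over the orbit $\N^{\mathbf{d}}\cdot[\nu_{\lambda}^{\mathbf{d}}]$ spans $\V_{\lambda}^{\mathbf{d}}$ linearly, hence no hyperplane of $\mathbb{P}(\V_{\lambda}^{\mathbf{d}})$ contains $\complete_{2n}^{\mathbf{d}}$ and a nonzero linear form cannot restrict to the zero section. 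By Theorem \ref{Thm:main1}, $\dim(\V_{\lambda}^{\mathbf{d}})^{*}=\dim\V_{\lambda}=|\set(\lambda)|=|\SyST_{\lambda}|$, so the whole statement reduces to the estimate $\dim\mathrm{H}^{0}(\complete_{2n}^{\mathbf{d}},\mathcal{L}_{\lambda}^{\mathbf{d}})\le\dim\V_{\lambda}$.

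To get this estimate I would degenerate once more, down to the toric variety. By Theorem \ref{Thm:main3}(2), for $\mathbf{e}$ in the interior of $\mathcal{K}_{2n}$ the variety $\complete_{2n}^{\mathbf{e}}$ is the projective toric variety attached to $\mathrm{FFLV}(\lambda)$; the Minkowski property in Theorem \ref{thm:FFLV-degenerate}(1) is exactly the statement that $\mathrm{FFLV}(\lambda)$ has the integer decomposition property, so this toric variety is normal and embedded by a complete linear system, giving $\dim\mathrm{H}^{0}(\complete_{2n}^{\mathbf{e}},\mathcal{L}_{\lambda}^{\mathbf{e}})=\#\big(\mathrm{FFLV}(\lambda)\cap\mathbb{Z}^{\Phi^{+}}\big)=|\set(\lambda)|=\dim\V_{\lambda}$ (the same count also follows from Corollary \ref{Cor:Basis}). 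I would then realize $\complete_{2n}^{\mathbf{e}}$ as a flat degeneration of $\complete_{2n}^{\mathbf{d}}$ inside the fixed ambient $\mathbb{P}\big({\bigwedge}^{1}\mathbb{C}^{2n}\big)\times\cdots\times\mathbb{P}\big({\bigwedge}^{n}\mathbb{C}^{2n}\big)$: choosing $\mathbf{e}=\mathbf{d}+\varepsilon\mathbf{e}'$ with $\mathbf{e}'$ interior and $\varepsilon>0$ small, linearity of the weight map $w$ together with Theorem \ref{Thm:main2} gives
$$\mathrm{in}_{\mathbf{w}_{\mathbf{e}'}}(\mathfrak{I}_{2n}^{\mathbf{d}})=\mathrm{in}_{\mathbf{w}_{\mathbf{d}}+\varepsilon\mathbf{w}_{\mathbf{e}'}}(\mathfrak{I}_{2n})=\mathrm{in}_{\mathbf{w}_{\mathbf{e}}}(\mathfrak{I}_{2n})=\mathfrak{I}_{2n}^{\mathbf{e}},$$
and $\mathbf{d}+\varepsilon\mathbf{e}'$ is interior because $\mathbf{d}$ lies in the relative interior of a face cut out by some of the defining inequalities, which by Proposition \ref{Prop:Cone} are precisely the facets. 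The associated Gröbner family over $\mathbb{A}^{1}$ is flat, and $\mathcal{L}_{\lambda}^{\mathbf{d}}$, being the restriction of the line bundle $\mathcal{O}(m_{1},\ldots,m_{n})$ from the fixed ambient, extends to a flat line bundle on it; upper semicontinuity of $h^{0}$, with $\complete_{2n}^{\mathbf{e}}$ as the special fibre, then yields $\dim\mathrm{H}^{0}(\complete_{2n}^{\mathbf{d}},\mathcal{L}_{\lambda}^{\mathbf{d}})\le\dim\mathrm{H}^{0}(\complete_{2n}^{\mathbf{e}},\mathcal{L}_{\lambda}^{\mathbf{e}})=\dim\V_{\lambda}$.

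Putting the two estimates together forces $\dim\mathrm{H}^{0}(\complete_{2n}^{\mathbf{d}},\mathcal{L}_{\lambda}^{\mathbf{d}})=\dim\V_{\lambda}=\dim(\V_{\lambda}^{\mathbf{d}})^{*}$, so the injective $\N^{\mathbf{d}}$-equivariant map $\Psi$ is an isomorphism of $\N^{\mathbf{d}}$-modules, proving $\mathrm{H}^{0}(\complete_{2n}^{\mathbf{d}},\mathcal{L}_{\lambda}^{\mathbf{d}})\simeq(\V_{\lambda}^{\mathbf{d}})^{*}$. Equivalently, the squeeze shows the degree-$\lambda$ component of $\mathcal{S}^{\mathbf{d}}/\mathfrak{I}_{2n}^{\mathbf{d}}$, of dimension $|\SyST_{\lambda}|$ by Corollary \ref{Cor:Basis}, maps isomorphically onto $\mathrm{H}^{0}(\complete_{2n}^{\mathbf{d}},\mathcal{L}_{\lambda}^{\mathbf{d}})$.

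I expect the main obstacle to be the upper estimate, i.e. excluding extra global sections uniformly for all $\mathbf{d}\in\mathcal{K}_{2n}$, and in particular for $\mathbf{d}$ in a proper boundary face where $\complete_{2n}^{\mathbf{d}}$ is not yet toric: this needs the secondary Gröbner degeneration of $\complete_{2n}^{\mathbf{d}}$ onto the FFLV toric variety to be set up correctly, with its flatness and the extension of $\mathcal{L}_{\lambda}^{\mathbf{d}}$ verified — the delicate step also in \cite[Theorem 8.1]{FFFM19}. The ingredients that make it work here are the linearity of the weight map $w$, the facet description of $\mathcal{K}_{2n}$ in Proposition \ref{Prop:Cone}, the initial-ideal identity $\mathfrak{I}_{2n}^{\mathbf{d}}=\mathrm{in}_{\mathbf{w}_{\mathbf{d}}}(\mathfrak{I}_{2n})$ of Theorem \ref{Thm:main2}, and the normality of the FFLV toric variety furnished by Theorem \ref{thm:FFLV-degenerate}(1).
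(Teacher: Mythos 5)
Your proposal is correct and follows essentially the route the paper intends: the paper's "proof" is a one-line citation to the argument of \cite[Theorem 8.1]{FFFM19} together with Corollary \ref{Cor:Basis}, and your writeup (injectivity of the restriction map from cyclicity of $\V_{\lambda}^{\mathbf{d}}$, plus the upper bound $h^{0}\le\dim\V_{\lambda}$ via the secondary Gr\"obner degeneration onto the normal FFLV toric variety and semicontinuity) is exactly a fleshed-out version of that cited strategy.
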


Moreover, the following Borel-Weil type vanishing theorem holds true.

\begin{theorem}\label{Thm:Borel2}
$\mathrm{H}^m(\complete_{2n}^{\mathbf{d}},\mathcal{L}_{\lambda}^{\mathbf{d}})= 0$ for all $\lambda\in \Lambda^+$ and all $m>0$.
\end{theorem}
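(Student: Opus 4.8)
The plan is to realise $\complete_{2n}^{\mathbf{d}}$ as the generic fibre of a flat one-parameter family whose special fibre is the symplectic FFLV toric variety, to prove the vanishing on that toric fibre, and then to conclude by upper semicontinuity of cohomology. First I would fix $\mathbf{d}\in\mathcal{K}_{2n}$, choose $\mathbf{e}\in\mathrm{relint}(\mathcal{K}_{2n})$, and build the degeneration. By Theorem~\ref{Thm:main3} the weight vectors $\mathbf{w}_{\mathbf{d}}=w(\mathbf{d})$ and $\mathbf{w}_{\mathbf{e}}=w(\mathbf{e})$ both lie in $\mathcal{C}_{2n}$, with $\mathbf{w}_{\mathbf{e}}$ in its relative interior; since $\mathbf{w}_{\mathbf{d}}+\epsilon\mathbf{w}_{\mathbf{e}}$ is then a proper positive combination of a point of $\mathcal{C}_{2n}$ and an interior point, it lies in $\mathrm{relint}(\mathcal{C}_{2n})$ for every $\epsilon>0$, and the standard Gr\"obner identity $\mathrm{in}_{\mathbf{b}}(\mathrm{in}_{\mathbf{a}}(\mathfrak{I}_{2n}))=\mathrm{in}_{\mathbf{a}+\epsilon\mathbf{b}}(\mathfrak{I}_{2n})$ (valid for $0<\epsilon\ll1$) gives $\mathrm{in}_{\mathbf{w}_{\mathbf{e}}}(\mathfrak{I}_{2n}^{\mathbf{d}})=\mathfrak{I}_{2n}^{\mathbf{e}}$. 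The associated Rees (deformation-to-the-normal-cone) construction for the weight $\mathbf{w}_{\mathbf{e}}$ on $\mathcal{S}^{\mathbf{d}}/\mathfrak{I}_{2n}^{\mathbf{d}}$ produces a flat morphism $\pi\colon\mathcal{X}\to\mathbb{A}^1$ inside $\bigl(\prod_{k=1}^n\mathbb{P}(\bigwedge^k\mathbb{C}^{2n})\bigr)\times\mathbb{A}^1$ with $\pi^{-1}(a)\cong\complete_{2n}^{\mathbf{d}}$ for $a\neq0$ and $\pi^{-1}(0)\cong\complete_{2n}^{\mathbf{e}}$, such that the restriction $\mathcal{L}$ of $\mathcal{O}(m_1,\dots,m_n)$ to $\mathcal{X}$ restricts to $\mathcal{L}_{\lambda}^{\mathbf{d}}$ on the former fibres and to $\mathcal{L}_{\lambda}^{\mathbf{e}}$ on the latter. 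Flatness of $\pi$ is guaranteed by Corollary~\ref{Cor:Basis}, which shows that the multigraded Hilbert function $\lambda\mapsto|\SyST_\lambda|=\dim\V_\lambda$ of $\mathcal{S}^{\mathbf{d}}/\mathfrak{I}_{2n}^{\mathbf{d}}$ does not depend on $\mathbf{d}$.

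Next I would prove the vanishing on the toric special fibre. By Theorem~\ref{Thm:main3}(2), $\complete_{2n}^{\mathbf{e}}$ is the projective toric variety $X$ of the symplectic FFLV polytope, and $\mathcal{L}_{\lambda}^{\mathbf{e}}=\bigotimes_i(\mathcal{L}_{\omega_i}^{\mathbf{e}})^{\otimes m_i}$ is the globally generated, hence nef, line bundle attached to the lattice polytope $\mathrm{FFLV}(\lambda)$. The Minkowski-sum property of Theorem~\ref{thm:FFLV-degenerate}(1), namely $\mathrm{FFLV}(k\lambda)=k\,\mathrm{FFLV}(\lambda)$ together with $\set(k\lambda)=\set(\lambda)+\cdots+\set(\lambda)$, says precisely that the FFLV polytopes are integrally closed, so $X$ is a normal toric variety and its Pl\"ucker embedding is projectively normal. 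Demazure vanishing for nef line bundles on complete toric varieties then gives $\mathrm{H}^m(\complete_{2n}^{\mathbf{e}},\mathcal{L}_{\lambda}^{\mathbf{e}})=0$ for all $m>0$.

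Finally, since $\pi$ is flat and projective and $\mathcal{L}$, being a line bundle on $\mathcal{X}$, is flat over $\mathbb{A}^1$, the function $a\mapsto\dim\mathrm{H}^m(\pi^{-1}(a),\mathcal{L}|_{\pi^{-1}(a)})$ is upper semicontinuous on $\mathbb{A}^1$; it vanishes at $a=0$ by the previous paragraph, while it is constant on $\mathbb{A}^1\setminus\{0\}$ because every such fibre is $(\complete_{2n}^{\mathbf{d}},\mathcal{L}_{\lambda}^{\mathbf{d}})$. The locus where this function is positive is closed and avoids $0$, hence is finite, so the constant value on $\mathbb{A}^1\setminus\{0\}$ must be $0$, i.e. $\mathrm{H}^m(\complete_{2n}^{\mathbf{d}},\mathcal{L}_{\lambda}^{\mathbf{d}})=0$ for all $m>0$. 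As a consistency check, flatness also gives $\chi(\complete_{2n}^{\mathbf{d}},\mathcal{L}_{\lambda}^{\mathbf{d}})=\chi(X,\mathcal{L}_{\lambda}^{\mathbf{e}})=\dim\V_\lambda$, in agreement with $\dim\mathrm{H}^0(\complete_{2n}^{\mathbf{d}},\mathcal{L}_{\lambda}^{\mathbf{d}})=\dim(\V_{\lambda}^{\mathbf{d}})^*$ from Theorem~\ref{Thm:Borel1}.

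The hard part is the second step: the argument needs the special fibre to be a variety on which the higher cohomology of the relevant line bundle is already known to vanish, and the FFLV toric variety is the natural candidate, so the whole argument hinges on the integral closedness of the symplectic FFLV polytope --- which is fortunately exactly the content of the already-proven Minkowski-sum property. A secondary technical point is that $\mathcal{L}_{\lambda}^{\mathbf{d}}$ is merely nef, not very ample, when $\lambda$ is non-regular, so one must work throughout with the restriction of a fixed line bundle on the ambient multiprojective product rather than with a genuine projective embedding; this affects none of the flatness, semicontinuity, or Demazure-vanishing inputs.
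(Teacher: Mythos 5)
Your proof is correct and follows essentially the same route as the paper: degenerate $\complete_{2n}^{\mathbf{d}}$ to the toric variety of the symplectic FFLV polytope, apply Demazure vanishing for globally generated line bundles there, and conclude by semicontinuity of cohomology in a flat projective family. Your write-up is in fact more careful than the paper's two-line argument on the one point that matters for semicontinuity — you correctly put the toric variety at the \emph{special} fibre $a=0$ and $\complete_{2n}^{\mathbf{d}}$ at the general fibres (the paper's phrasing reverses these roles, which as literally stated would give the inequality in the useless direction), and you supply the normality/IDP input from the Minkowski-sum property that the reference to Fulton tacitly requires.
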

\begin{proof}
    By \cite[Sec. 3.5]{Ful93}, the theorem holds for any $\mathbf{d}$ in the relative interior of $\mathcal{K}_{2n}$, since the line bundle $\mathcal{L}_{\lambda}^{\mathbf{d}}$ is generated by its sections. By Theorem \ref{Thm:main3} (2), it follows that for any generic point $\mathbf{d}'\in \mathcal{K}_{2n}$, one has a flat family over $\mathbb{A}^1$ with the generic fiber $\complete_{2n}^{\mathbf{d}'}$ and $\complete_{2n}^{\mathbf{d}}$ as special fiber. The claim of the theorem then follows from \cite[Theorem 12.8]{Har13}.
\end{proof}

\section{A maximal cone in $\mathrm{Trop}(\complete_{2n})$}\label{Sec:MaxCone}

In this section we describe the image of $w:\mathcal{K}_{2n}\to\mathbb{R}^{\mathcal{P}}$ defined in Section \ref{Sec:WeightedDeg}. 

First we extend the map $w$ to the entire $\mathbb{R}^{\Phi^+}$ using the same definition by choosing $\mathbf{d}$ from $\mathbb{R}^{\Phi^+}$. The extended map is linear and injective, and will be again denoted by $w:\mathbb{R}^{\Phi^+}\to\mathbb{R}^{\mathcal{P}}$. We shall denote the image $w(\mathcal{K}_{2n})$ by $\mathcal{C}_{2n}$, and set $s_J\in\mathbb{R}^{\mathcal{P}}$ to be the coordinate function corresponding to $J\in\mathcal{P}$. By the linearity of the map $w$, $\mathcal{C}_{2n}$ is a polyhedral cone, which is termed \emph{symplectic FFLV tropical cone}. We start from describing its defining inequalities.

We will use notations introduced in Section \ref{Sec:PBWTab} and Section \ref{Section:BirSeq}.

\begin{lemma}\label{lem:maximalcone}
The following set of equalities and inequalities cuts out the cone $\mathcal{C}_{2n}$ in $\mathbb{R}^{\mathcal{P}}$:
\begin{enumerate}
    \item[(i).] $s_{1,\ldots,k}=0$ for $1\leq k\leq n$;
    \item[(ii).] for any $i< j$ with $1\leq i \leq n$,  $1\leq j \leq \ov{1}$ and any $1 \leq k< \ell\leq n$ such that $i\leq k<\ell<j$, we have $s_{1,\ldots,i-1,i+1,\ldots,k,j}=s_{1,\ldots,i-1,i+1,\ldots,\ell,j}$;
    \item[(iii).] for any $J=(j_1,\ldots,j_k)\in\mathcal{P}$ such that $\mathrm{st}(T(J))=T(J)$, let $\{\alpha_{p_1,q_1},\ldots,\alpha_{p_\ell,q_\ell}\}$ be the support of $\rho_k(T(J))$ with $p_1<\cdots<p_\ell$ and $q_1>\cdots>q_\ell$; then $s_{j_1,\ldots,j_k} = s_{1,\ldots,p_1-1,q_1}+\cdots+s_{1,\ldots, p_\ell-1,q_\ell}$;
    \item[(iv).] for any $J\in\mathcal{P}$ such that $\mathrm{st}(T(J))\neq T(J)$, then one has $s_J=s_{\mathrm{st}(T(J))}$;
    \item[(v).] $s_{1,\ldots,i-1,i+1}+s_{1,\ldots,i,i+2}\leq s_{1,\ldots,i-1,i+2}$ for $1\leq i\leq n-1$;
    \item[(vi).] $s_{1,\ldots,i-1,j}+s_{1,\ldots,i,j+1}\leq s_{1,\ldots,i-1,j+1}+s_{1,\ldots,i,j}$ for $1\leq i<j\leq n-1$;
    \item[(vii).] $s_{1,\ldots,i-1,\ov{j+1}}+s_{1,\ldots,i,\ov{j}}\leq s_{1,\ldots,i-1,\ov{j}}+s_{1,\ldots,i,\ov{j+1}}$ for $1\leq i<j\leq n-1$;
    \item[(viii).] $2s_{1,\ldots,i-1,\ov{i+1}}\leq s_{1,\ldots,i-1,\ov{i}}+s_{1,\ldots,i,\ov{i+1}}$ for $1\leq i\leq n-1$.
\end{enumerate}
\end{lemma}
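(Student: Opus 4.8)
The plan is to show that the listed equalities and inequalities are precisely the image of the defining data of $\mathcal{K}_{2n}$ under the linear map $w$. Recall that $w$ sends $\mathbf{d}\in\mathbb{R}^{\Phi^+}$ to the function $J\mapsto -\mathfrak{d}^{\mathbf{d}}(\mathbf{s}_{[k],J})$ on $\mathcal{P}$, and that $\mathbf{s}_{[k],J}=\mathbf{s}_{p_1,q_s}+\dots+\mathbf{s}_{p_s,q_1}$ where $P=[k]\setminus J$, $Q=J\setminus[k]$. First I would record explicitly what $w(\mathbf{d})_J$ is in terms of the coordinates $d_{i,j}$: for $J=\{1,\dots,i-1,i+1,\dots,k,j\}$ one checks directly from the formula for $\mathbf{s}_{i,j}$ (Section \ref{Subsec:Fund}) that $s_J=-\mathfrak{d}^{\mathbf{d}}(\mathbf{s}_{i,j})=-d_{i,j'}$ for the appropriate $j'$ (namely $-d_{i,j-1}$ if $j\le n$, $-d_{i,j}$ if $\overline n\le j\le\overline i$, $-d_{\overline j,\overline i}$ if $j>\overline i$); for general $J$, $s_J$ is the negated sum of the corresponding $d$'s indexed by the support of $\rho_k(T(J))$. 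This makes items (i)--(iv) tautological: (i) is $\mathbf{s}_{[k],[k]}=0$; (ii) says $-d_{i,j'}$ does not depend on which single-jump support one picks (they all give the same root $\alpha_{i,j'}$); and (iii), (iv) are the definition of $w$ composed with the identification $\rho_k$ of Lemma \ref{Lem:TabBij} together with the fact, established at the start of Section \ref{Sec:Boundary} and reused in the proof of Theorem \ref{Thm:main2}, that $\mathbf{s}_{[k],J}=\mathbf{s}_{[k],J(\mathrm{st}(T(J)))}$, i.e. $T$ and $\mathrm{st}(T)$ have the same associated FFLV point. So the linear span of the image $\mathcal{C}_{2n}$ is cut out by (i)--(iv), and on this span the coordinates $s_{1,\dots,i-1,j}$ serve as a coordinate system identified (up to sign) with the $d_{i,j'}$.

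Next I would translate the four families of defining inequalities of $\mathcal{K}_{2n}$ from Definition \ref{def:weightsystem} through this dictionary. Under $d_{i,j'}\leftrightarrow -s_{1,\dots,i-1,j}$ (with $j$ the index for which $\alpha_{i,j'}$ is the single-jump root reaching $j$ from $i$), inequality $(\mathrm{A}_i)$: $d_{i,i}+d_{i+1,i+1}\ge d_{i,i+1}$ becomes $-s_{1,\dots,i-1,i+1}-s_{1,\dots,i,i+2}\ge -s_{1,\dots,i-1,i+2}$, i.e. item (v); similarly $(\mathrm{B}_{i,j})$ becomes (vi), $(\mathrm{C}_{i,j})$ becomes (vii), and $(\mathrm{D}_i)$ becomes (viii). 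Here I need to be careful about the exact correspondence of indices: $d_{i,i}=\mathbf{d}(\alpha_i)$ corresponds to $s_{1,\dots,i-1,i+1}$, $d_{i,i+1}$ to $s_{1,\dots,i-1,i+2}$, $d_{i,\overline i}$ to $s_{1,\dots,i-1,\overline i}$, $d_{i,\overline{i+1}}$ to $s_{1,\dots,i-1,\overline{i+1}}$, etc.; a short table verifying these identifications (each via the explicit $\mathbf{s}_{i,j}$ of Lemma \ref{Lem:Fund1}, reading off that a single root vector $f_{i,j'}$ is applied) does the job. Having matched the generators and the inequalities, I would conclude: $\mathbf{v}\in\mathbb{R}^{\mathcal{P}}$ satisfies (i)--(iv) iff $\mathbf{v}=w(\mathbf{d})$ for a unique $\mathbf{d}\in\mathbb{R}^{\Phi^+}$ (injectivity of $w$, already noted), and then such $\mathbf{v}$ additionally satisfies (v)--(viii) iff $\mathbf{d}$ satisfies $(\mathrm A_i),(\mathrm B_{i,j}),(\mathrm C_{i,j}),(\mathrm D_i)$, i.e. iff $\mathbf{d}\in\mathcal{K}_{2n}$, i.e. iff $\mathbf{v}\in w(\mathcal{K}_{2n})=\mathcal{C}_{2n}$.

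I expect the main obstacle to be bookkeeping rather than conceptual: verifying precisely which tuple $J\in\mathcal{P}$ and which strip/standardization $T(J),\mathrm{st}(T(J))$ produces which negated coordinate $-d_{i,j'}$, and in particular confirming that the three regimes $j\le n$, $\overline n\le j\le\overline i$, $j>\overline i$ in the formula for $\mathbf{s}_{i,j}$ are exactly accounted for by the conditions $\mathrm{st}(T(J))=T(J)$ versus $\ne$ in items (iii)--(iv). The one genuinely non-formal input is the fact that $\rho_k$ is a bijection onto $\set(\omega_k)$ (Lemma \ref{Lem:TabBij}) and that $w(\mathbf{d})_J$ really equals $-\mathfrak{d}^{\mathbf{d}}$ evaluated at the \emph{support of} $\rho_k(T(J))$ even on the boundary of $\mathcal{K}_{2n}$ — but this is exactly what was shown in the proof of Theorem \ref{Thm:main2} (the minimal term of $p_J^{\mathbf{d}}$ with respect to $>_r$ is $t^{\mathbf{s}_{[k],J}}z_k$ for all $\mathbf{d}\in\mathcal{K}_{2n}$), and I would cite that. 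After matching, items (v)--(viii) give $n(n-1)$ inequalities, which by Proposition \ref{Prop:Cone}(2),(4) is exactly the facet count of $\mathcal{K}_{2n}$, so no redundancy is introduced; I would remark on this to justify calling these the facets of $\mathcal{C}_{2n}$.
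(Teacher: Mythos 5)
Your proposal is correct and follows essentially the same route as the paper: items (i)--(iv) are identified as the linear equations cutting out the image of the (injective, linear) extension of $w$, with the coordinates $s_{1,\ldots,i-1,j}$ serving as a dictionary for the $d_{i,j'}$ via the explicit form of $\mathbf{s}_{[k],J}$ and the standardization identity $\mathbf{s}_{[k],J}=\mathbf{s}_{[k],J(\mathrm{st}(T(J)))}$, and items (v)--(viii) are then the translations of the four defining facet families $(\mathrm{A}_i)$--$(\mathrm{D}_i)$ of $\mathcal{K}_{2n}$, with Proposition \ref{Prop:Cone} guaranteeing completeness. The paper's own proof is just a terser version of this same argument, so nothing further is needed.
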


\begin{proof}
The linear equalities (i), (iii) follow directly from the definition of $w$. Condition (ii) follows from the fact that for any $i<k<j$ with $k\leq n$, one has $s_{1,\ldots,i-1,i+1,\ldots,k,j}=a_{i,j}$. The equality (iv) holds by the construction of the standardization of a strip tableau in Section \ref{Sec:PBWTab}: for $J\in\mathcal{P}$ such that $\mathrm{st}(T(J))\neq T(J)$ with $\mathrm{st}(T(J))=T(J')$ for some $J'\in\mathcal{P}$, $\mathbf{s}_{[k],J}=\mathbf{s}_{[k],J'}$. The image of $w:\mathbb{R}^{\Phi^+}\to\mathbb{R}^{\mathcal{P}}$ is cut out in $\mathbb{R}^{\mathcal{P}}$ by the linear equalities (i)-(iv). The inequalities (v)-(viii) follow from Definition \ref{def:weightsystem}. By Proposition \ref{Prop:Cone} (2), the equalities and inequalities (i)-(viii) characterize the image of $\mathcal{K}_{2n}$ under $w$.
\end{proof}

From Proposition \ref{Prop:Cone} (1), $\mathcal{C}_{2n}$ is an $n^2$--dimensional cone in $\mathbb{R}^{\mathcal{P}}$.

\begin{example}
When $n=2$, the cone $\mathcal{C}_4\subseteq\mathbb{R}^{\mathcal{P}}$ is defined by the following equalities and inequalities:
$$s_1=s_{12}=0,\ \ s_{\ov{2}}=s_{1,\ov{1}} = s_{2,\ov{2}},\ \  s_{\ov{1}} = s_{2,\ov{1}},\ \ s_{\ov{2},\ov{1}} = s_{\ov{1}} +s_{1,\ov{2}},$$ 
$$s_{2} + s_{1,\ov{2}} \geq s_{\ov{2}},\ \ s_2 + s_{\ov{2}} \geq s_{\ov{1}},\ \ 2s_{\ov{2}}\geq s_{\ov{1}} + s_{1,\ov{2}}.$$
\end{example}

\subsection*{Proof of Theorem ~\ref{Thm:main3} (1)}
First notice that any Pl\"ucker coordinate $X_J$ for $J\in\mathcal{P}$ can not vanish identically on any $\complete_{2n}^{\mathbf{d}}$ for $\mathbf{d}\in\mathcal{K}_{2n}$. From Theorem \ref{Thm:main2}, for any point in $\mathcal{C}_{2n}$, the initial ideal of $\mathfrak{I}_{2n}$ associated to this point is a prime ideal, which contains no monomials, hence $\mathcal{C}_{2n}$ is contained in the tropical symplectic flag variety $\mathrm{Trop}(\complete_{2n})$. It remains to show that this cone is a maximal cone since the primeness follows from Theorem \ref{Thm:main2}. The idea of the proof is the same as \cite[Theorem 7.3]{FFFM19}. 

Since the dimension of $\mathrm{Trop}(\complete_{2n})$ is $n^2$, the cone $\mathcal{C}_{2n}$ is of maximal dimension. Assume there is a maximal cone $\mathcal{C}_{2n}'\subset\mathrm{Trop}(\complete_{2n})$ that contains $\mathcal{C}_{2n}$, then $\dim \mathcal{C}_{2n}' =n^2$ and $\mathcal{C}_{2n}'$ must be contained in the image of $w$. Therefore according to the proof of Lemma \ref{lem:maximalcone}, any point $s:=(s_J)_{J\in\mathcal{P}}\in \mathcal{C}_{2n}'$ must satisfy the linear equalities (i)-(iv) of Lemma \ref{lem:maximalcone}. We want to show that $\mathcal{C}_{2n}'\subseteq \mathcal{C}_{2n}$, so we are left with showing that the point $s$ satisfies the inequalities (v)-(viii) of Lemma \ref{lem:maximalcone}. We argue by showing that no point outside $\mathcal{C}_{2n}'$ lies in $\mathrm{Trop}(\complete_{2n})$. This is accomplished by considering each of the four types of facets corresponding to the inequalities (v)-(viii) as follows:
\begin{enumerate}
\item[(v)] We consider for $1\leq i\leq n-1$ the Pl\"ucker relation:
\[\X_{1,\ldots,i}\X_{1,\ldots,i-1,i+1,i+2}+\X_{1,\ldots,i-1,i+2}\X_{1,\ldots,i,i+1}-\X_{1,\ldots,i-1,i+1}\X_{1,\ldots,i,i+2}.\]
From Lemma \ref{lem:maximalcone} (i) and (ii), $s_{1,\ldots,i}=s_{1,\ldots,i,i+1}=0$ for $1\leq i\leq n-1$ , and $s_{1,\ldots,i-1,i+1,i+2}=s_{1,\ldots,i-1,i+2}$. If $s_{1,\ldots,i-1,i+2}<s_{1,\ldots,i-1,i+1}+s_{1,\ldots,i,i+2}$, then the initial form of the above Pl\"ucker relation is the monomial $-\X_{1,\ldots,i-1,i+1}\X_{1,\ldots,i,i+2}$. This means $s\notin\mathrm{Trop}(\complete_{2n})$, so the inequalities in (v) must be valid.

\item[(vi)] For $1\leq i< j\leq n-1$, we have the Pl\"ucker relation:
\[\X_{1,\ldots,i-1,j,j+1}\X_{1,\ldots,i,i+1}-\X_{1,\ldots,i-1,i+1,j+1}\X_{1,\ldots,i,j}+\X_{1,\ldots,i-1,i+1,j}\X_{1,\ldots,j+1}.\]
From (iii), we have the equality $s_{1,\ldots,i-1,j,j+1}=s_{1,\ldots,i-1,j+1}+s_{1,\ldots,i,j}$. Therefore, if $s_{1,\ldots,i-1,j}+s_{1,\ldots,i,j+1}>s_{1,\ldots,i-1,j+1}+s_{1,\ldots,i,j}$, the initial form of the above Pl\"ucker relation would be the monomial $\X_{1,\ldots,i-1,i+1,j}\X_{1,\ldots,j+1}$. This would imply again that $s\notin \mathrm{Trop}(\complete_{2n})$. It thus follows that the inequalities in (vi) must hold true.

\item[(vii)] Further still for $1\leq i<j\leq n-1$, we consider the Pl\"ucker relation: 
\[\X_{1,\ldots,i-1,i+1,\ov{j+1}}\X_{1,\ldots,i,\ov{j}}+\X_{1,\ldots,i-1,\ov{j+1},\ov{j}}\X_{1,\ldots,i,i+1}-\X_{1,\ldots,i-1,i+1,\ov{j}}\X_{1,\ldots,i,\ov{j+1}}.\]
We again have from (iii) the equality $s_{1,\ldots,i-1,\ov{j+1},\ov{j}}=s_{1,\ldots,i-1,\ov{j}}+s_{1,\ldots,i,\ov{j+1}}$. Therefore if $s_{1,\ldots,i-1,\ov{j+1}}+s_{1,\ldots,i,\ov{j}}> s_{1,\ldots,i-1,\ov{j}}+s_{1,\ldots,i,\ov{j+1}}$, then the initial form of the above Pl\"ucker relation is the monomial $\X_{1,\ldots,i-1,i+1,\ov{j+1}}\X_{1,\ldots,i,\ov{j}}$. This then implies that $s\notin \mathrm{Trop}(\complete_{2n})$. Inequalities in (vii) must therefore be valid.

\item[(viii)] Finally, for $1\leq i\leq n-1$ we consider the Pl\"ucker relation:
\[\X_{1,\ldots,i,\ov{i}}\X_{1,\ldots,i-1,i+1,\ov{i+1}}-\X_{1,\ldots,i-1,i+1,\ov{i}}\X_{1,\ldots,i,\ov{i+1}}+\X_{1,\ldots,i,i+1}\X_{1,\ldots,i-1,\ov{i+1},\ov{i}}.\]
By (iii) we have the equality $s_{1,\ldots,i-1,\ov{i+1},\ov{i}}=s_{1,\ldots,i-1,\ov{i}}+s_{1,\ldots,i,\ov{i+1}}$ and by (iv) we have the equality $s_{1,\ldots,i,\ov{i}}=s_{1,\ldots,i-1,i+1,\ov{i+1}}$. Therefore if $2s_{1,\ldots,i-1,\ov{i+1}}> s_{1,\ldots,i-1,\ov{i}}+s_{1,\ldots,i,\ov{i+1}}$, then the initial form of the above Pl\"ucker relation is the monomial $\X_{1,\ldots,i,\ov{i}}\X_{1,\ldots,i-1,i+1,\ov{i+1}}$. This means that $s\notin \mathrm{Trop}(\complete_{2n})$, hence inequalities in (viii) must hold true. 
\end{enumerate}

The above argument shows that $\mathcal{C}_{2n}'\subseteq\mathcal{C}_{2n}$. The proof is then complete. \qed\\

\end{document}